\keywords{hyperelliptic curve, theta characteristic, moduli of curves, Fano threefold} 
\subjclass{primary 14H10; 14E30; secondary 14J45; 14N05}
\theoremstyle{plain}
\newtheorem{thm}{Theorem}[subsection]
\newtheorem{prop}[thm]{Proposition}
\newtheorem{cor}[thm]{Corollary}
\newtheorem{lem}[thm]{Lemma}
\theoremstyle{definition}
\newtheorem{defn}[thm]{Definition}
\newtheorem{gen}[thm]{Generality Conditions}
\newtheorem{nota}[thm]{Notation}
\newtheorem*{ackn}{Acknowledgements}
\newtheorem{rem}[thm]{Remark}
\newcommand{\sB}{\mathcal{B}}
\newcommand{\sH}{\mathcal{H}}
\newcommand{\sK}{\mathcal{K}}
\newcommand{\sL}{\mathcal{L}}
\newcommand{\sM}{\mathcal{M}}
\newcommand{\sO}{\mathcal{O}}
\newcommand{\sP}{\mathcal{P}}
\newcommand{\sR}{\mathcal{R}}
\newcommand{\sS}{\mathcal{S}}
\newcommand{\mC}{\mathbb{C}}
\newcommand{\mG}{\mathbb{G}}
\newcommand{\mP}{\mathbb{P}}
\numberwithin{equation}{section}
\newenvironment{fcaption}{\begin{list}{}{
\setlength{\leftmargin}{35pt}
\setlength{\rightmargin}{35pt}
\setlength{\labelsep}{5pt}
}}{\end{list}}
\author{Francesco Zucconi}
\address{D.M.I.F. \\
the University of Udine\\
Udine, 33100, Italy\newline
\texttt{francesco.zucconi@uniud.it}}
\begin{document}

\begin{center}
\textbf{\Large
The rationality of the moduli space\\ of two-pointed ineffective spin hyperelliptic curves}
\par
\end{center}
{\Large \par}

$\;$

\begin{center}
Francesco Zucconi 
\par\end{center}

\vspace{5pt}

$\;$

\begin{fcaption} {\small  \item 
Abstract. 
By the geometry of the $3$-fold quadric we show that the coarse moduli space of 
genus $g$ ineffective spin hyperelliptic curves with two marked points is a rational variety for every $g\geq 2$.
}\end{fcaption}

\vspace{0.5cm}


\markboth{Zucconi}{Moduli of two-pointed ineffective spin hyperelliptic curves}


\section{Introduction}
\subsection{The result}
We work over $\mC$, the complex number field. The purpose of this paper is to show the following result:
\medskip

\noindent
{\bf{Main Theorem}}  {\it{The coarse moduli space $\sS^{+,{\rm{hyp}}}_{g,2}$ 
of genus $g$ hyperelliptic ineffective spin curves with two marked points is an irreducible rational variety.}}
\medskip

\subsection{Motivations} 
 It is well known that the coarse moduli space of hyperelliptic curves $\sH_g$ is rational: \cite{Kat}, c.f. \cite{Bo}. In \cite{Ca}
 it is shown that  the coarse moduli space of $n$-marked hyperelliptic curves is irreducible for every $n$ and it is rational for every $n\leq 2g+8$. In \cite{Be} it is shown that $\sH_{g,n}$ is uniruled for $n \leq 4g+4$.

 In \cite{TZ4} we have shown that the coarse moduli space $\sS^{+,{\rm{hyp}}}_{g,1}$ 
 of ineffective spin hyperelliptic curves with one marked point is rational. Hence it is a natural question to study the rationality of the scheme $\sS^{+,{\rm{hyp}}}_{g,n}$ for $0\leq n\leq 2g+8$.

Our result fits into a vast literature concerning the rationality problem
 of special subloci of moduli spaces of curves too; see the book \cite{Boe}. For example in \cite[Proposition 2.2.1.5]{Boe},  it is studied the function field of the moduli space of ineffective spin curves $[C,\theta]$ where $C$ is a plane curve of degree $d$. We also like to recall that  ineffective spin hyperelliptic curves are crucial to construct hyperelliptic $K_3$ surfaces and then families of Godeaux surfaces as in \cite{Co}. They also play a role in Mumford's solution of the Hyperelliptic Schottky Problem \cite{Mu2}.  Families of hyperelliptic curves with two marked points play a role also in arithmetic; for example see: \cite{Sh}. 

There is also another reason to shed some light on the geometry of some loci of the moduli space of spin curves. Indeed it seems to exist a structural relation between spin curves and $3$-folds geometry. In \cite[Cor. 4.1.1]{TZ1} we showed that the geometry of trigonal spin curves is ruled by the geometry of rational curves on the del Pezzo threefold $B$ of degree $5$ (and index $2$). In \cite[Prop.~3.1.2]{TZ2} we constructed a theta characteristic on the general trigonal curve from
the incidence correspondence of intersecting lines on $B$. Indeed we generalised and we extended Mukai's approach to study genus twelve prime Fano threefold $V_{22}$: \cite{Mu2, Mukai12}. 
Finally we stress that the result of this paper is directly related to the rationality results of \cite{TZ4} and of \cite{TZ3}.

\subsection{Spin curves}
A smooth spin curve is a pair $(C,\theta)$ of a smooth curve of genus $g\geq 2$ and a line bundle $\theta$ on $C$ such that $\theta^{\otimes 2}$ is isomorphic to the canonical bundle $\omega_C$. The coarse moduli space $\sS_g$ of such pairs has a compactification ${\overline{S_g}}$, see: \cite{cornalba}, which is compatible with the Deligne-Mumford compactification ${\overline{M_g}}$ of the coarse moduli space $M_g$ of smooth curves of genus $g$ via stable curves \cite{DM}. For its geometry see: \cite{Fa}.

The natural forgetful  morphism $\pi\colon \sS_g\to M_g$ is a finite map of degree $2^{2g}$. By \cite{Mu1}, \cite{At} we know that $\sS_g$ is the disjoint union of two irreducible components $\sS^{+}_{g}$ and $\sS^{-}_{g}$ where $\sS^{+}_{g}$ is the moduli space of those $[C,\theta]$ such that $h^{0}(C,\theta)$ is an even number and  $\sS^{-}_{g}$ is the one where $h^{0}(C,\theta)$ is an odd number. 

Those $[C,\theta]\in \sS^{+}_{g}$
with $h^{0}(C,\theta)=0$ fill an open subset inside $\sS^{+}_{g}$ and the class $\theta$ is said to be an ineffective theta characteristic on $C$. 

The geometry of  $\sS^{}_{g}$ is a well-established subject of study since the beginnings of algebraic geometry. The hyperelliptic case has been considered in the literature; see for example: \cite{Na}, \cite{Ol}.

\subsection{  $\sS^{+,{\rm{hyp}}}_{g,2}$ and the quadric $3$-fold}
In this paper we consider the moduli space $\sS^{+,{\rm{hyp}}}_{g,2}\hookrightarrow \sS^{+}_{g,2}$ given by the classes $[C,\theta, m,n]$ where $C$ is a smooth hyperelliptic curve of genus $g$, $h^0(C,\theta)=0$ and $m,n\in C$ up to automorphism. We show below that the geometry of the $3$-fold quadric $Q\subset\mP^4$ encodes the one of $\sS^{+,{\rm{hyp}}}_{g,2}$. 
\subsubsection{Linear algebra set up}
Let $V$ be a $5$-dimensional vector space and let $Q\subset \mP^4=\mP(V)$ be a smooth quadric threefold. Consider the couple $(Q,q)$ where $q\subset Q$ is a smooth conic. Let $\mP(W)<\mP^4$ be the projective plane spanned by $q$ where $W <V$ is the corresponding $3$-dimensional vector sub-space. We consider $\Phi_Q\colon V\to V^\vee$ the natural isomorphism to the dual space $V^{\vee}$ of $V$ induced by anyone among the  non-degenerate bilinear form $b\colon V\times V\to\mathbb C$ associated to $Q$. Since $q$ is smooth and $W$ is not inside $Q$, it holds that :$$V=W\oplus^{\perp_b}W^{\perp}$$
where $W^{\perp}:=\{v\in V|\forall w\in W,\,\, b(v,w)=0 \}.$
\subsubsection{The hyperelliptic curve}

We take a general element $[H]\in \mP(V^{\vee})$. The precise constrains on $H$ are stated in Generality Conditions \ref{generalityconditionsonH}.

Let $Q_{H}\subset Q$ be the hyperplane section $Q\cap H$.
We denote by $|(1,0)|$, $|(0,1)$ the linear systems which induce, respectively, the two natural rulings.

 Inside $Q_H$ we take a general $R\in |(1,d-1)|$; the precise constrains on $R$ are stated in Generality Conditions \ref{generality conditions}. Definitley $R$ is a rational curve of  degree $d$ with respect to the embedding $Q_H\hookrightarrow H\subset \mP^4$ given by $|(1,1)|$. 
 
 Since $q$ is of degree $2$ and since the tangent hyperplane section $Q_t:=T_{t}Q\cap Q$ to $Q$ at any point $t\in Q$ is a cone over a  smooth conic, with vertex $t$, it holds that for any $t\in Q$ there exist two points $a(t), a'(t)\in q$, not necessarily distinct,  such that the lines $\langle t,a(t)\rangle, \langle t,a'(t)\rangle\subset Q$. In particular by a general point $t\in R$ there pass (at most) two lines $l(t)$, $l'(t)$ which satisfy the following conditions: 
\begin{enumerate}[{i)}]
\item $l(t), l'(t)\subset Q$;
\item $\emptyset\neq l(t)\cap q=\{ a(t) \}$, 
\item $\emptyset\neq l'(t)\cap q=\{a'(t)\}.$
\end{enumerate}

By the same reason, given a point $a\in q$ there exist (at most) $d$ lines $l_{a}^{1},..., l_{a}^{d}\subset Q$ passing through $a$ such that $l_{a}^{1}\cap R=\{ t^{1} \}$,..., $l_{a}^{d}\cap R=\{ t^{d} \}$.

 This picture shows a $[2,d]$ correspondence $C\subset R\times q$. The scheme $C$ comes equipped with a $g^1_2$ given by the morphism $C\to R$ induced by the natural projection $R\times q\to R$; the other projection induces a $g^1_d$ on $C$. 
 
 \subsubsection{The ineffective theta-characteristic}
 
 By the geometry of $Q$ we can also describe explicitely an ineffective theta-characteristic on $C$. To see it we introduce the notion of marked line for the rational curve $R$ with respect to the triple $(Q,q, H)$.

 {\it{The notion of $R$-marked line.}} A $R$-marked line is a point $[t,a]\in R\times q$ such that the line $l_{[t,a]}:=\langle t,a\rangle\subset\mP^4$ is actually a line inside $Q$. The line $l_{[t,a]}$ is called the support of the marked line $[t,a]$. We can show that the correspondence $C$ is the scheme of $R$-marked lines with respect to $(Q,q,H)$. 
 
 Now consider a general market line $l=l_{[t,a]}$, then by the projection from $l\subset Q\subset\mP^4$ we find that there exist exactly $d-1$ marked lines  $[t_1, a_1],..., [t_{d-1}, a_{d-1}]$ such that 
$l_{[t_i, a_i]}\cap l\neq\emptyset$ and such that  $l_{[t_i, a_i]}\cap l$ is distinct from both $l\cap q$ and $l\cap R$. In Proposition \ref{thetatheta} we prove that there exists an ineffective theta characteristic $\theta(R)$ on $C$ such that the unique effective divisor of  $|\theta(R)+[t,a]|$ is exactly  $\sum_{i=1}^{d-1}[t_i, a_i]$.

\subsubsection{The two special marked lines}
Our construction comes with some constrains which gives two special points on $C$. Let $\{x,y\}=q\cap Q_H$ and let us consider the two linear series $|(1,0)|$ and $(0,1)|$ on $Q_H$. 
Above we have selected a general element  $[R]\in |(1,d-1)|$. In particular there exists a unique line $m\in|(0,1)|$ such that $m\cap q=\{x\}$ and there exists a unique line $n\in|(0,1)|$ such that $n\cap q=\{y\}$. 
Obviously there exists a unique point $p_x(R)\in R\cap m$ and there exists a unique point $p_y(R)\in R\cap n$. For a general $R$ we denote by $m(R):=[p_x(R),x]$, $n(R):=[p_y(R),y]$ these two special $R$-marked lines. 
Note that if we vary $R$ the points  $p_x(R)$, $p_y(R)$ vary as well, but the respective supporting lines of $m(R)$, $n(R)$ do not vary. These special points on $C(R)$ originate the class $[C(R),\theta(R), m(R),n(R)] \in\sS^{+,{\rm{hyp}}}_{g,2}.$
\begin{figure}
\centering
\includegraphics[scale=0.5]{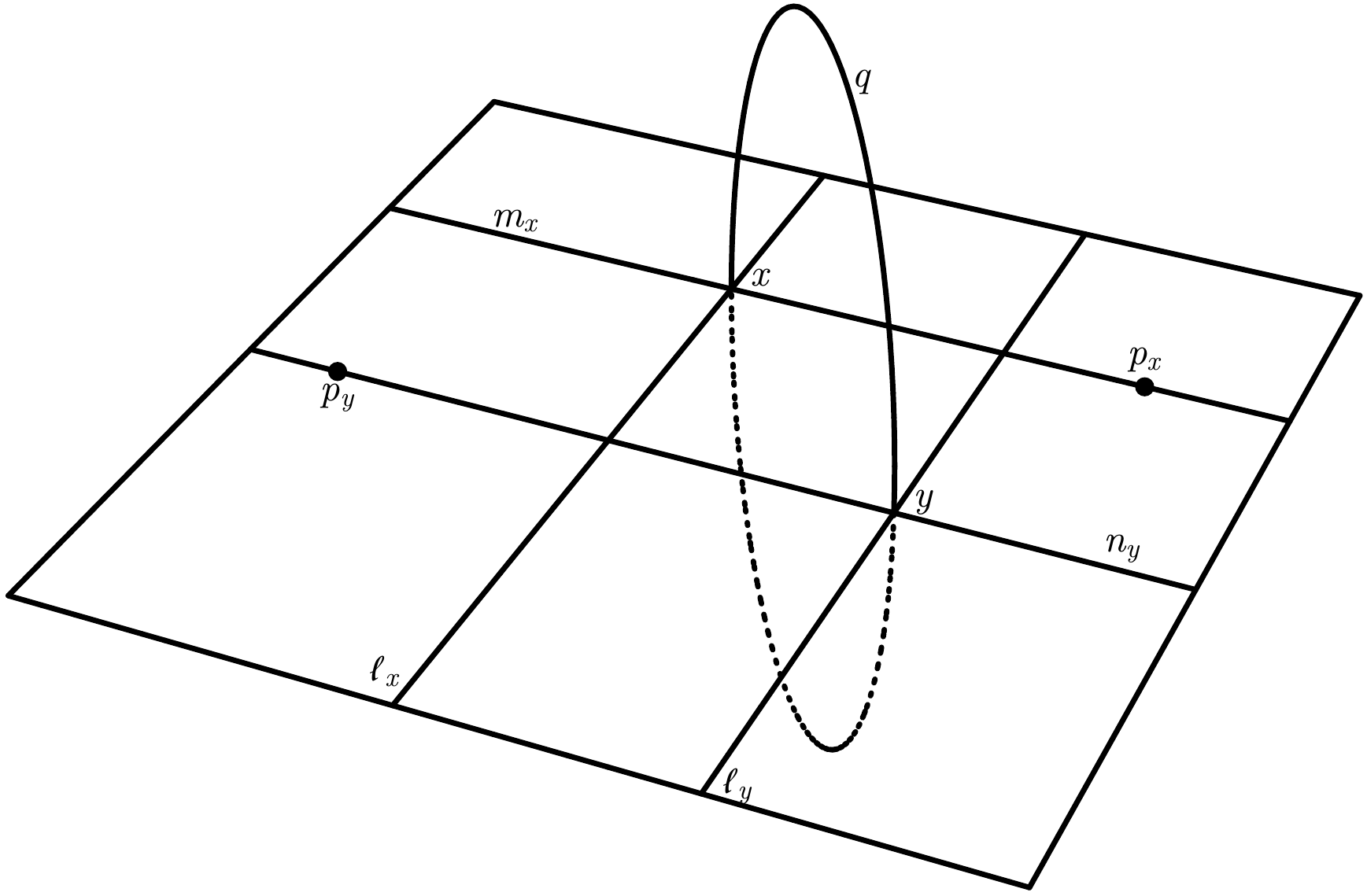}
\caption{The surface $Q_H$}
\label{etichettaPerRiferimenti}
\end{figure}

\subsection{ On the proof of the Main Theorem} 
The curve $C$ naturally comes equipped with a forgetful morphism $f_R\colon C\to {\rm{Hilb}}^{Q}_1$ obtained associating to any marked line the corresponding line of $Q$. The image $M$ is a singular curve with two points of multiplicity $d-1$; 
actually a line $l\subset Q$ can be the support of more than one marked line; see Remark \ref{multisupporto}. Moreover $M$ is contained in the quadric $S_q\subset{\rm{Hilb}}^{Q}_{1}$ which parameterises the lines of $Q$ which touch $q$. 

On the other hand a general element $[C,\theta, m,n]\in \sS^{+,{\rm{hyp}}}_{d-1,2}$ comes equipped with a surface $Q_{C,\theta,m,n}$ isomorphic to $\mP^1\times\mP^1$ and a morphism $f_{\theta,m,n}\colon C\to S_{C,\theta,m,n}\subset\mP^3$ where $S_{C,\theta,m,n}$ is the image of $Q_{C,\theta,m,n}$ given by its rulings; see Corollary \ref{Serveassai}.

In Proposition \ref{identificazione} we build an identification between $({\rm{Hilb}}^{Q}_1,  S_q)$ and $(\mP^3, S_{C(R),\theta(R),m(R),n(R) })$, where now we have stressed the dependence of the hyperelliptic curve by the rational curve $R$. Actually we can read off the full geometry of a general element $[C,\theta, m,n]\in \sS^{+,{\rm{hyp}}}_{d-1,2}$ via the above identification. This leads to the Reconstruction Theorem, which asserts that given $[C,\theta, m,n]$ there exists $R\subset Q_H$ as above such that  $[C(R),\theta(R), m(R),n(R)]=[C,\theta, m,n]$. 
Finally we need a detailed but simple analysis of the automorphism group of $(Q,q,H)$ and of its action on the linear system $|(1,d-1)|$ of $Q_H$; actually we show that we have to consider a $\frac{\mathbb Z}{2\mathbb Z}$-action on $|(1,d-1)|$. In Corollary \ref{loe} we show that 
$|(1,d-1)|// \frac{\mathbb Z}{2\mathbb Z}$ is rational. Finally we show the Injectivity Theorem, see Theorem \ref{injectivity}, which claims that  $|(1,d-1)|//\frac{\mathbb Z}{2\mathbb Z}$ is birational to  $\sS^{+,{\rm{hyp}}}_{d-1,2}$ and the Main Theorem follows; see Theorem \ref{theend}.

\begin{ackn}
The author thanks Hiromichi Takagi for very useful discussions and Gianluca Gorni since he has done the three pictures which should strongly help the reader to follow the arguments. This research is supported by DIMAGeometry PRIDZUCC.
\end{ackn}

\section{Classical and less classical results on hyperelliptic curves}

\subsection{Ineffective Spin Hyperelliptic curves with a marked point}
 Let $C$ be an hyperelliptic curve of genus $g$ and let $W(C)$ be the set of its Weierstrass points.
 \subsubsection{Partitions of $W(C)$} It is easy to see that if we select a $g+1$ partition 
 $$\sP_{\theta}:= \{ \{w_1,..w_{g+1}\},\{w'_{1},..,w'_{g+1}\}  \}$$ 
 \noindent of $W(C)$, that is $W(C)=\{w_1,..w_{g+1}\} \bigsqcup \{w'_{1},..,w'_{g+1}\}$, then $w_1+w_2+...+w_{g+1}$ is linearly equivalent to $w'_{1}+...+ w'_{g+1}$. Moreover the divisor 
 \begin{equation}
\label{eq:twopre}
\theta\sim w_1+\cdots+w_{g+1}-g^1_2\sim w'_{1}+\cdots+w'_{g+1}-g^1_2
\end{equation}
\noindent gives an ineffective theta characteristic $[\theta]$ inside ${\rm{Pic}}(C)$, where by $g^1_2$ we will denote both the linear system $|p+p'|$ giving the $2$-to-$1$ cover $\phi_{|p+p'|}\colon C\to\mP^1$ than a divisor of the linear system. It is well-known that also the viceversa is true.

\begin{prop}\label{formoftheta} Ineffective theta characteristics are in one-to-one correspondence to $g+1$ partions of $W(C)$.
\end{prop}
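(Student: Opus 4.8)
The plan is to establish the correspondence in both directions, with the forward direction (partition $\mapsto$ theta characteristic) already sketched in the discussion preceding the statement, so the real content is checking that the construction is well-defined, that it genuinely lands among ineffective theta characteristics, and that it is bijective.

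First I would verify the forward map is well-defined. Recall that on a hyperelliptic curve of genus $g$ the canonical bundle satisfies $\omega_C \sim (g-1)g^1_2$, and each Weierstrass point $w_i$ satisfies $2w_i \sim g^1_2$. Given a partition $\sP_\theta = \{\{w_1,\dots,w_{g+1}\},\{w'_1,\dots,w'_{g+1}\}\}$ of $W(C)$, summing the relations $2w_i \sim g^1_2$ gives $2(w_1+\cdots+w_{g+1}) \sim (g+1)g^1_2$, hence $2\bigl(w_1+\cdots+w_{g+1} - g^1_2\bigr) \sim (g-1)g^1_2 \sim \omega_C$, so $\theta := w_1+\cdots+w_{g+1}-g^1_2$ is indeed a theta characteristic; the same computation applied to the complementary set shows the two expressions in \eqref{eq:twopre} agree, using that $w_1+\cdots+w_{g+1} + w'_1+\cdots+w'_{g+1} = \sum_{w\in W(C)} w \sim (g+1)g^1_2$. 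For ineffectiveness I would argue that $h^0(C,\theta)=0$: if $\theta$ were effective, then since $\theta + g^1_2 \sim w_1+\cdots+w_{g+1}$ is a sum of distinct Weierstrass points, one analyses the $g^1_2$-fibre structure — any effective divisor linearly equivalent to a reduced sum of $g+1$ Weierstrass points that is itself of the form (effective)$+g^1_2$ forces two of the $w_i$ to lie in a common fibre of $\phi_{g^1_2}$, i.e. to be equal, a contradiction. (Concretely: $h^0(C, w_1+\cdots+w_{g+1}) = 1$ because by the base-point-free pencil trick / Clifford-type count on hyperelliptic curves a reduced divisor supported on distinct Weierstrass points of degree $\le g+1$ is non-special with only the obvious section, so $h^0(\theta) = h^0(w_1+\cdots+w_{g+1} - g^1_2) = 0$.)

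Next I would construct the inverse map. Given an ineffective theta characteristic $\theta$ on $C$, consider the degree-$(g+1)$ line bundle $\theta + g^1_2$. Since $2\theta \sim \omega_C \sim (g-1)g^1_2$, we get $2(\theta+g^1_2)\sim (g+1)g^1_2$, and by Riemann–Roch together with the ineffectivity of $\theta$ one computes $h^0(C,\theta+g^1_2)=1$; let $D$ be its unique effective divisor. The claim is that $D$ is reduced and supported on $g+1$ distinct Weierstrass points. If $D$ contained a point $p$ together with its hyperelliptic conjugate $\iota(p)$ (or a point $p$ with multiplicity $\ge 2$ over a non-branch point), then $D - g^1_2 \ge 0$ would make $\theta$ effective; hence $D$ meets every fibre of $\phi_{g^1_2}$ in at most one point, which for a divisor of degree $g+1$ with $2D \sim (g+1)g^1_2$ forces $D = w_1+\cdots+w_{g+1}$ with the $w_i$ distinct Weierstrass points. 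Its complement in $W(C)$ gives the second block, producing the partition $\sP_\theta$.

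Finally I would check the two maps are mutually inverse: starting from $\sP_\theta$, the associated $\theta$ has $\theta+g^1_2 \sim w_1+\cdots+w_{g+1}$ with unique effective representative exactly $w_1+\cdots+w_{g+1}$ (uniqueness again from ineffectivity of $\theta$), so we recover the same partition; conversely the round trip is immediate from the definitions. The main obstacle is the ineffectivity/reducedness dichotomy — pinning down that $h^0(\theta+g^1_2)=1$ and that its unique section vanishes on a reduced sum of distinct branch points — but this is precisely the standard analysis of special divisors on hyperelliptic curves (every effective divisor is uniquely a sum of fibres of $g^1_2$ plus a reduced divisor of Weierstrass and/or general points, with the reduced part computing the $h^0$), so I would cite or reproduce that normal form and the result follows.
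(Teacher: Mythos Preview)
The paper's own proof is just a one-line citation to \cite[p.~288, Exercise 32]{ACGH}, so you are supplying considerably more than the paper does. The overall architecture of your argument is the right one, but there is a genuine computational error that breaks both directions as written: you assert $h^0(C,\theta+g^1_2)=h^0(C,w_1+\cdots+w_{g+1})=1$, whereas Riemann--Roch gives
\[
h^0(\theta+g^1_2)=(g+1)-g+1+h^0(\theta-g^1_2)\ge 2,
\]
and in fact equality $h^0=2$ holds (the paper itself records this later as Lemma~\ref{fatt}\,(2): $\sL_W=\sO_C(\theta+g^1_2)$ is a base-point-free pencil). Consequently your forward ineffectivity argument (``any member of $|w_1+\cdots+w_{g+1}|$ of the form $E+g^1_2$ forces two $w_i$ to coincide'') fails because the linear system has many members besides $w_1+\cdots+w_{g+1}$; and your inverse construction (``let $D$ be the unique effective divisor'') fails because there is no unique $D$.

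Both directions are easily repaired. For ineffectivity, use instead $\theta+w_{g+1}\sim w_1+\cdots+w_g$: this divisor has degree $g$, and the standard normal form for divisors on hyperelliptic curves (which you invoke at the end) now genuinely gives $h^0(w_1+\cdots+w_g)=1$; since its unique effective representative does not contain $w_{g+1}$, one gets $h^0(\theta)=0$. For the inverse, observe that the hyperelliptic involution $\iota$ acts on the pencil $|\theta+g^1_2|$; if this action were trivial the map $C\to\mP^1$ would factor through the $g^1_2$, forcing every member to contain a fibre and hence $\theta$ effective. So $\iota$ acts with exactly two fixed members, each $\iota$-invariant and (again by ineffectivity of $\theta$) containing no $g^1_2$-fibre, hence each a reduced sum of $g+1$ Weierstrass points; their sum lies in $|(g+1)g^1_2|=\bigl|\sum_{w\in W(C)}w\bigr|$, so they are complementary and yield the desired partition.
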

\begin{proof} See: cf.\cite[p. 288, Exercise 32]{ACGH}. 
\end{proof}

\subsubsection{The case with one marked point} We recall that $\sS^{+,{\rm{hyp}}}_{g,1}$ is the coarse moduli space of $1$-marked ineffective spin hyperelliptic curves. We will need the following:
\begin{lem}
\label{lem:reconst}
Let $[(C,\theta, m)]$ be any element of $\sS^{+,{\small{\rm{hyp}}}}_{g,1.}$
Let $\sP_{\theta}$ be a $g+1$-partition of $W(C)$ as above. The following assertions hold\,$:$
\begin{enumerate}[$(1)$]
\item The linear system $|\theta+g^{1}_{2}+m|$ defines a birational morphism $$\varphi_{|\theta+g^{1}_{2}+m|}:=\psi\colon C\to \mP^2$$
from $C$ to a plane curve $M_{\theta,m}$ of degree $g+2$. 
\item Let $D_m=n_1+n_2+...+n_g$ be the unique element of
$|\theta+m|$. Then there exists $o\in M_{\theta,m}$ such that $\psi (n_1)=\psi (n_2)=...=\psi( n_g)=o$.
\vspace{5pt}

\noindent For the assertions $(3)$ and $(4)$, we set $A:=\{m, w_1,\dots, w_{g+1}, w'_{1},\dots, w'_{g+1}\}$.
\item
The support of $D_m$ contains no point of $A$.
\item
The point $o$ as in $(2)$ is different from the $\psi$-images of points of $A$. 
Besides no two points of $A$ are mapped to the same point by the map $\psi\colon C\to \mP^2$. 
\item The curve $M_{\theta,m}$ has a point of multiplicity $g$ supported on $o$.
\item There exist two lines $L_{\theta,m}, L^{'}_{\theta,m}\subset \mP^2$ such that $o\not\in L_{\theta,m}\cup L^{'}_{\theta,m}$ and $$L_{\theta,m}\cap M(C)=\{\psi (w_1),...,\psi(w_d),\psi(m)\},$$
$$L^{'}_{\theta,m}\cap M(C)=\{\psi(w'_1),...,\psi(w'_d),\psi(m)\}.$$
\end{enumerate}
\end{lem}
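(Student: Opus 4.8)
The plan is to deduce all six assertions from Riemann--Roch and the rigidity of linear series on a hyperelliptic curve; the only input that is not purely formal is a dimension count ensuring that the open conditions in (3)--(4) are met by the general marked point.

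Write $D:=\theta+g^1_2+m$, so $\deg D=g+2$ because $\deg\theta=g-1$. Since $\omega_C\cong\theta^{\otimes2}$, Serre duality gives $h^1(C,D)=h^0(C,\theta-g^1_2-m)$, which vanishes as $\theta-g^1_2-m$ is the ineffective class $\theta$ minus an effective divisor; hence $h^0(C,D)=3$ by Riemann--Roch and $\psi:=\varphi_{|D|}$ maps to $\mP^2$. Likewise $h^1(C,\theta+m)=h^0(C,\theta-m)=0$, so $h^0(C,\theta+m)=1$ and $|\theta+m|$ consists of a single effective divisor $D_m$, of degree $g$. For (1) I would now check that $\psi$ is birational onto a plane curve of degree $g+2$: as an effective divisor of degree $\le 2g-2$ on a hyperelliptic curve with $h^0\ge2$ contains a fibre of $g^1_2$, a relation $h^0(C,D-p-q)\ge2$ forces $\theta+m-p-q$ to be effective, i.e. $p+q\le D_m$; since $D_m$ is a fixed divisor, a general pair $p,q$ is separated by $|D|$, so $\psi$ is birational. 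By the same principle a base point of $|D|$ would force $D_m$ to contain a $g^1_2$-fibre, i.e. $h^0(C,\theta+m-g^1_2)\ge1$, which fails for $m$ outside a proper closed subset of $C$; granting this, $\psi\colon C\to M_{\theta,m}$ is a birational morphism onto a plane curve with $\deg M_{\theta,m}=\deg D=g+2$.

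For (2) and (5) I would start from $D\sim D_m+g^1_2$, which gives $h^0(C,D-D_m)=h^0(C,g^1_2)=2=h^0(C,D)-1$. Since $|D|$ is base point free, removing from $D_m=n_1+\dots+n_g$ a single $n_i$ drops $h^0$ by exactly one, after which removing the remaining $n_j$ drops nothing; hence every section of $\sO_C(D)$ vanishing at $n_1$ vanishes at all the $n_j$, i.e. $\psi(n_1)=\dots=\psi(n_g)=:o$, which is (2). Consequently the $g$ points $n_i$ (distinct, for general $m$) make $\mult_o M_{\theta,m}\ge g$; but the genus--degree formula for the degree-$(g+2)$ plane curve $M_{\theta,m}$, of geometric genus $g$, gives $\sum_P\delta_P=\binom{g+1}{2}-g=\binom{g}{2}$, and a point of multiplicity $\ge g$ already contributes $\ge\binom{g}{2}$. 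Hence $o$ is the only singular point of $M_{\theta,m}$, an ordinary $g$-fold point, so $\mult_o M_{\theta,m}=g$, which is (5).

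For (3)--(4): each is an open condition on $m\in C$, so it suffices to see it is not identically violated. From $h^0(C,\theta)=0$ one gets $m\notin\Supp(D_m)$. For $a\ne a'$ in $A$, the conditions ``$a\in\Supp(D_m)$'' and ``$\psi(a)=\psi(a')$'' amount, via base point freeness and the $g^1_2$-fibre criterion, to $h^0(C,\theta+m-a)\ge1$ and $h^0(C,\theta+m-a-a')\ge1$, i.e. to the membership of $[\theta+m-a]$, resp. $[\theta+m-a-a']$, in the Brill--Noether loci $W_{g-1}(C)$, resp. $W_{g-2}(C)$; these are proper subvarieties, and since that membership already fails for $m=a$ (because $\theta$ is ineffective), the relevant translate of the Abel--Jacobi image of $C$ is not contained in them and so meets them in finitely many points. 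Hence the general $m$ avoids all these loci, which gives (3) and the injectivity of $\psi$ on $A$ in (4); and $o\ne\psi(a)$ for $a\in A$ follows from (3), since $o=\psi(n_j)$ with $n_j\in\Supp(D_m)$ and $\Supp(D_m)\cap A=\emptyset$. Finally (6): as $\psi$ is birational with $\psi^{*}\sO_{\mP^2}(1)=\sO_C(D)$, the divisors of $|D|$ are cut on $M_{\theta,m}$ by lines of $\mP^2$; by \eqref{eq:twopre} both $w_1+\dots+w_{g+1}+m$ and $w'_1+\dots+w'_{g+1}+m$ lie in $|D|$, so the corresponding lines $L_{\theta,m},L'_{\theta,m}$ meet $M_{\theta,m}$ exactly along the images of the $w_i$, the $w'_i$ and $m$, none of which is $o$ by (3)--(4); therefore $o\notin L_{\theta,m}\cup L'_{\theta,m}$. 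I expect the only genuine difficulty to be this genericity bookkeeping --- base point freeness of $|D|$, reducedness of $D_m$, and separation of $A$ by $\psi$, each removing a proper closed subset of $C$; everything else is Riemann--Roch on a hyperelliptic curve together with the genus--degree formula.
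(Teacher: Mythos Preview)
The paper does not prove this lemma at all: it simply cites \cite[Lemma~4.2.2]{TZ4}. So there is no argument in the present paper to compare yours against, and your write-up is in effect supplying what the paper outsources.

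Your proof is essentially correct. The Riemann--Roch computations for $h^0(D)=3$ and $h^0(\theta+m)=1$ are right, and the key mechanism---that on a hyperelliptic curve every special $g^r_d$ is $r\cdot g^1_2$ plus a fixed part, whence $h^0(D-p-q)\ge 2$ forces $p+q\le D_m$---is exactly what drives (1), (2), (5). Two small points are worth tightening. First, your phrase ``effective divisor of degree $\le 2g-2$ with $h^0\ge 2$ contains a fibre of $g^1_2$'' is not literally the hypothesis you need; what you use is that the divisors $D-p$ and $D-p-q$ are \emph{special} (their $h^1$ is positive by Riemann--Roch in the relevant degree range), and then the hyperelliptic structure theorem applies. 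Second, base point freeness of $|D|$ in fact holds for \emph{every} $m$, not just a general one: if $p$ were a base point you would get $h^0(\theta+m-g^1_2)\ge 1$, but $\theta+m-g^1_2\sim\theta-m'$ (with $m+m'\in g^1_2$), and effectivity of $\theta-m'$ contradicts $h^0(\theta)=0$.

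On the other hand, your instinct to prove (3)--(4) only for general $m$ is not overcaution but necessity: the lemma's ``any element'' is slightly optimistic. If $m=w_k$ is a Weierstrass point in one half of the partition, then $D_m=\sum_{j\neq k}w_j$, so $\Supp D_m$ meets $A$ and (3) fails outright. Your Brill--Noether/translate argument that each bad condition cuts out a proper closed locus of $m$ is the correct way to handle this; you might just say explicitly that the statement should be read for $m$ outside the finite set of Weierstrass points (and a further finite set governing (4)). Finally, for (5) you can bypass the distinctness of the $n_i$ entirely: since the pencil of lines through $o$ pulls back to $|g^1_2|+D_m$, the local intersection number of $M_{\theta,m}$ with a general such line at $o$ is exactly $\deg D_m=g$, giving $\mult_o M_{\theta,m}=g$ without appealing to the genus--degree bound.
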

\begin{proof}See: c.f. \cite[Lemma 4.2.2]{TZ4}.
\end{proof}

%

\begin{rem}\label{polyhedra} By Proposition \ref{formoftheta} an ineffective theta characteristic $[\theta]$ is invariant by the action of the hyperelliptic involution. The unique effective divisor $a_1(P)+a_2(P)+...+a_g(P)\in |\theta+P|$ is called {\it{the theta polyhedron associated to the point $P\in C$}}. It holds that
 $a'_1(P)+a'_2(P)+...+a'_g(P)\in |\theta+P'|$ is the theta polyhedron associated to $P'$ where $P+P'\sim a_i(P)+a'_i(P)$, $i=1,..., g$ is the hyperelliptic linear series.
\end{rem}

\subsection{Ineffective Spin Hyperelliptic curves with two marked point} 
We fix two general points $m,n\in C$. In the rest of the paper we set $g=d-1$. 

\subsubsection{Theta characteristics and couples of points} If $\theta$ is ineffective
then $h^0(C,\sO_C(\theta+m))=h^0(C,\sO_C(\theta+n))=1$. Moreover there exist $2d-2$ mutually distinct points $m_1,...,m_{d-1}$, $n_1$,..., $n_{d-1}\in C$ such that 
$$\sum_{i=1}^{d-1}n_i\in |\theta+m|$$
$$ \sum_{i=1}^{d-1}m_i\in |\theta+n|.$$
\begin{nota}We set 
\begin{enumerate}[{1)}]
\item $\sL_{\theta+m+n}:=\sO_C(m+n+\theta));$
\item $\sL_{W}:=\sO_C (\theta+g^1_2),$
\end{enumerate}
and we stress that $\sL_W$ is  $\sO_C(w_1+w_2+...+w_d).$
\end{nota}
\begin{lem}\label{fatt}It holds:
\begin{enumerate}
\item $\sL_{\theta+m+n}$ is base point free and $h^0(C,\sL_{ \theta+m+n})=2$;
\item $\sL_W$ is base point free and $h^0(C,\sL_{ W})=2$;
\item $n_1+...+n_{d-1}+m'\in|w_1+...+w_d|\ni m_1+...+m_{d-1}+n'$.
\end{enumerate}
\end{lem}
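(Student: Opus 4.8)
The plan is to prove each of the three assertions of Lemma~\ref{fatt} by combining Proposition~\ref{formoftheta} with the Riemann--Roch theorem and standard facts about linear series on a hyperelliptic curve $C$ of genus $g=d-1$.

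\textbf{Parts (1) and (2).} First I would compute degrees: $\deg\sL_{\theta+m+n}=\deg\theta+2=(g-1)+2=g+1=d$, and likewise $\deg\sL_W=\deg\theta+2=d$. By Riemann--Roch, $h^0(C,\sL)-h^1(C,\sL)=\deg\sL-g+1=d-(d-1)+1=2$ for any line bundle $\sL$ of degree $d$, so it suffices to show $h^1=0$ in each case, i.e.\ that the residual bundle $\omega_C\otimes\sL^{-1}$, which has degree $2g-2-d=d-3$, is ineffective. For $\sL_W$ this is immediate from the partition description: $\omega_C\otimes\sL_W^{-1}\sim\theta-g^1_2$ has a name, but more directly $h^0(\omega_C\otimes\sL_W^{-1})=h^1(\sL_W)$, and since $\sL_W=\sO_C(w_1+\cdots+w_d)$ with the $w_i$ from one half of a $g+1$ partition, the complementary $w_1'+\cdots+w_{d}'$ is the unique effective divisor in $|\sL_W|$ (the two are linearly equivalent by the partition property and $h^0=2$ would force a pencil, which on a hyperelliptic curve must be composed with the $g^1_2$ — but a generic half-partition divisor is reduced and supported on Weierstrass points, hence not $g^1_2$-composed unless $d$ is even and the $w_i$ pair up, which a $g+1$-partition excludes). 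I would instead argue cleanly: $h^0(\sL_W)=h^0(\theta+g^1_2)$; since $\theta$ is ineffective, the exact sequence $0\to\theta\to\theta+g^1_2\to\theta+g^1_2|_{g^1_2}\to 0$ gives $h^0(\theta+g^1_2)\leq 2$, and $h^0\geq 2$ by Riemann--Roch after checking $h^1=0$; the cleanest route is that $\theta+g^1_2$ has degree $d<2g$ wait $d=g+1$ which can exceed... for $g\geq 2$, $d=g+1\leq 2g$, so I use Clifford/base-point analysis. For $\sL_{\theta+m+n}$ the same degree count applies and $h^1(\sL_{\theta+m+n})=h^0(\omega_C\otimes\sO_C(-\theta-m-n))=h^0(\theta-m-n)$ (using $\theta^{\otimes2}\cong\omega_C$), which is $0$ since $\theta$ is ineffective and subtracting points cannot create sections. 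Base-point freeness then follows because for a degree $d$ bundle with $h^0=2$ on a curve of genus $d-1$, a base point would drop us to a degree $d-1$ bundle with $h^0=2$, i.e.\ a $g^1_{d-1}$; on a hyperelliptic curve every $g^1_k$ with $k<g$ wait — I would rule out the base point by showing $h^0(\sL(-P))=1$ for every $P$, equivalently $h^1(\sL(-P))=0$, equivalently (same computation as above) $h^0(\theta+P-m-n)=0$ resp.\ $h^0(\theta+g^1_2-P)=h^0(\theta+P')=1\neq 2$, which holds because $\theta+P'$ is a theta polyhedron in the sense of Remark~\ref{polyhedra} and is never a pencil.

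\textbf{Part (3).} Here I would observe that $\sum n_i\in|\theta+m|$ means $\sum_{i=1}^{d-1}n_i+m\sim\theta+2m$; adding $m'$ (the hyperelliptic conjugate of $m$) gives $\sum n_i+m+m'\sim\theta+2m+m'\sim\theta+m+g^1_2$, wait that is not quite $\sL_W$. Let me instead use $m+m'\sim g^1_2$: then $\sum_{i=1}^{d-1}n_i+m'\sim\theta+m+m'-m+m'$... I would rather argue directly: $\sum n_i\sim\theta+m$, and $\theta+g^1_2=\sL_W$, so $\sum n_i+m'\sim\theta+m+m'\sim\theta+g^1_2=\sL_W=w_1+\cdots+w_d$. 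Symmetrically $\sum m_i+n'\sim\theta+n+n'\sim\theta+g^1_2=w_1+\cdots+w_d$. So all three divisors lie in $|\sL_W|=|w_1+\cdots+w_d|$, which is exactly the claim. This part is essentially a one-line linear-equivalence manipulation once (1) and (2) are in place.

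\textbf{Main obstacle.} The only real subtlety is the hyperelliptic vanishing argument underpinning (1) and (2): one must be careful that a degree $d=g+1$ line bundle with two sections on a hyperelliptic curve is not accidentally special, and that no base points appear. The safe tool is the explicit description of the sub-$g^1_2$ geometry — every special linear series on a hyperelliptic curve is a sum of the $g^1_2$ and a base divisor — together with the ineffectivity of $\theta$ (equivalently, the partition $\sP_\theta$ has both halves reduced with distinct Weierstrass points, so $\theta$ is not of the form $g^1_2+\text{effective}$). I expect the proof in the paper to invoke exactly Proposition~\ref{formoftheta} and this structural fact, so I would lean on Lemma~\ref{lem:reconst} and Remark~\ref{polyhedra}, which already encode that $|\theta+P|$ is a single reduced theta polyhedron, to get $h^0(\theta+P)=1$ for the relevant points $P$ and hence the desired $h^1=0$ statements.
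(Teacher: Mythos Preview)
Your approach is essentially the paper's: Riemann--Roch plus Serre duality (using $K_C-\theta-m-n\sim\theta-m-n$ and ineffectivity of $\theta$) for $h^0=2$, and the linear-equivalence chain $\sum n_i+m'\sim\theta+m+m'\sim\theta+g^1_2\sim w_1+\cdots+w_d$ for part~(3). Part~(3) is verbatim what the paper does, and your treatment of $\sL_W$ via $h^0(\theta+P')=1$ is actually more explicit than the paper's ``well-known and easy''.

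There is, however, a genuine gap in your base-point freeness argument for $\sL_{\theta+m+n}$. You correctly reduce it to showing $h^0(\theta+P-m-n)=0$ for every $P\in C$, but then you simply assert this and move on. Unlike the companion statement $h^0(\theta+P')=1$, this vanishing is \emph{not} automatic from ineffectivity of $\theta$ alone: it says that no point $P$ has both $m$ and $n$ in its theta polyhedron, and for special pairs $(m,n)$ this can fail. You need the standing hypothesis that $m,n$ are \emph{general} points of $C$. The paper handles this differently and more directly: it notes that $m+\sum_{i}m_i$ and $n+\sum_{i}n_i$ are two explicit members of $|\theta+m+n|$, and under the generality already imposed (the $2d-2$ points $m_i,n_j$ mutually distinct, and $m'\neq n$) their supports are disjoint, so the pencil has no base point. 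Either complete your cohomological route by invoking generality of $m,n$, or switch to the paper's two-divisor argument.
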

\begin{proof} $(1)$. The divisor $m+\sum_{i=1}^{d-1}m_i$ is linearly equivalent to $n+\sum_{i=1}^{d-1}n_i$. Moreover $K_C-\theta-m-n\sim \theta-m-n$. By Riemann- Roch theorem it follows that $h^0(C,\sL_{ \theta})=2$. Since $m'\neq n$ it is easy to see that the linear system $|\theta+n+m|$ is a $g^1_d$. $(2)$ is well-known and easy to be proved. To show $(3)$ consider the unique $g^1_2$ on $C$: $|m+m'|$ and note that $w_1+w_2+...+w_d\sim\theta+m+m'\sim n_1+...+n_{d-1}+m' \sim m_1+...+m_{d-1}+n'$. 

 \end{proof}

\noindent Since $2\theta\sim K_C$ we have that for $\eta\in |K_C+p+p'+m+n|$ it holds that
\begin{equation}\label{itiseta}\eta\sim\theta+p+p'+n+\sum_{i=1}^{d-1}n_i\end{equation}
\noindent
and $$\sO_C(\eta)= \sL_{\theta+m+n}\otimes \sL_{W}.$$

\subsubsection{The case with two marked points}  We recall that $\sS^{+,{\rm{hyp}}}_{g,2}$ is the coarse moduli space of $2$-marked ineffective spin hyperelliptic curves.
The reader can easily follow the proof of the following Lemma, by considering the case where $C$ has genus $2$, that is when $d=3$.

\begin{lem}\label{degreedlinear} Let $[C,\theta, m,n]\in \sS^{+,{\rm{hyp}}}_{d-1,2}$ be a general element. It holds that:
 \begin{enumerate}[$(1)$]
\item the map $\phi_{|\eta|}\colon C\to \mP^{d+1}$ is an embedding;
\item the linear span $\langle\phi_{|\eta|}(m_1),...\phi_{|\eta|}(m_{d-1}), \phi_{|\eta|}(n_1),...\phi_{|\eta|}(n_{d-1})\rangle$ is a $\mP^{d-1}$;
\item the linear spans $\Pi:=\langle\phi_{|\eta|}(m_1),...\phi_{|\eta|}(m_{d-1})\rangle$, $\Pi':=\langle\phi_{|\eta|}(n_1),...\phi_{|\eta|}(n_{d-1})\rangle$ are distinct and both of dimension $d-2$;
\item the linear space $\Theta:=\Pi\cap\Pi'$ is disjoint from $\phi_{|\eta|}(C)$;
\item the composition $f:=\pi_{\Theta}\circ \phi_{|\eta|}\colon C\dashrightarrow \mP^{3}$, where $\pi_{\Theta}\colon   \mP^{d+1}\setminus \Theta\to\mP^3$ is the projection from $\Theta$, is a morphism to a curve $M\subset\mP^3$ such that
$f(m_1)=f(m_2)=...=f(m_{d-1})=a_{m}\in M$, $f(n_1)=f(n_2)=...=f(n_{d-1})=a_{n}\in M\subset\mP^3$ and $a_m,a_n$ are respectively the support of a point of multiplicity $d-1$;
\item there exist two disjoint lines $L$, $L'$ inside $\mP^3$ such that  $L\cap f(C)=\{f(w_1),...,f(w_{d})\}$ and $L'\cap f(C)=\{f(w'_1),...,f(w'_{d})\}$;

\end{enumerate}
\end{lem}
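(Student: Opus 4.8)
The plan is to derive all six assertions from the single computation that $\deg\eta=(2\theta+p+p'+m+n)$ has degree $2(d-1)-2+2+2 = 2d+2$... wait, let me recompute: $\deg K_C = 2g-2 = 2d-4$, so $\deg\eta = (2d-4)+2+2 = 2d$, and by Riemann–Roch on this line bundle of degree $2d$ on a curve of genus $d-1$ one gets $h^0(C,\sO_C(\eta)) = 2d - (d-1) + 1 = d+2$ provided $h^1=0$, i.e. provided $\eta$ is nonspecial; since $\deg\eta = 2d > 2d-4 = \deg K_C$ this is automatic. So $\phi_{|\eta|}\colon C\to\mP^{d+1}$ with the target of the stated dimension. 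For assertion $(1)$ I would invoke the standard fact that a line bundle of degree $\geq 2g+1$ on a smooth curve is very ample; here $2d \geq 2(d-1)+1 = 2d-1$ holds, so $\phi_{|\eta|}$ is an embedding for \emph{every} such spin curve, general or not. This disposes of $(1)$.

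For $(2)$–$(4)$ the key is the decomposition $\sO_C(\eta) = \sL_{\theta+m+n}\otimes\sL_W$ recorded just before the statement, together with Lemma \ref{fatt}. I would argue as follows. From \eqref{itiseta}, $\eta\sim \theta+p+p'+n+\sum_{i=1}^{d-1}n_i$, and symmetrically $\eta\sim\theta+p+p'+m+\sum_{i=1}^{d-1}m_i$; also $\sum n_i\in|\theta+m|$ and $\sum m_i\in|\theta+n|$. To compute $\dim\langle\phi_{|\eta|}(\mathrm{supp})\rangle$ for a subdivisor $D\leq$ (a divisor in $|\eta|$) one uses that the span of $\phi_{|\eta|}(D)$ has dimension $\deg D - 1 - h^0(C,\sO_C(\eta-D)) + h^0(C,\sO_C(\eta))-(d+2)+\ldots$; concretely, the failure of $\phi_{|\eta|}(D)$ to be linearly independent is governed by $h^0(\eta - D)$. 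For $D = \sum_{i=1}^{d-1}m_i$ we have $\eta - D \sim \theta+p+p'+n+\sum n_i - (\text{something})$; more directly $\eta - \sum m_i \sim \eta - (\theta+n) + (\text{base adjustments})$, and I would compute $h^0(C,\sO_C(\eta-\sum m_i))$. Since $\eta - \sum m_i \sim K_C + p+p'+m+n - \sum m_i \sim \theta + p+p'+m+n + \sum n_i - \sum m_i$, and using $\sum m_i - n \sim \theta \sim \sum n_i - m$, this reduces to $\eta - \sum m_i \sim \theta + p + p' + m + n$, of degree $2d - (d-1) = d+1$, with $h^0 = (d+1)-(d-1)+1 = 3$ (nonspecial). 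Hence $\langle\phi_{|\eta|}(m_1),\dots,\phi_{|\eta|}(m_{d-1})\rangle$ has dimension $(d-1)-1-(3 - \text{corank})\ldots$; I will need to be careful here, but the upshot is that $\Pi$ is a $\mP^{d-2}$, giving $(3)$, and symmetrically for $\Pi'$. For $(2)$, the divisor $\sum m_i + \sum n_i$ has degree $2d-2$ and $\eta - (\sum m_i + \sum n_i) \sim 2\theta + p+p'+m+n - (\theta+n)-(\theta+m) \sim p+p'$, which is $g^1_2$, so $h^0 = 2$; this forces the span to be $\mP^{(2d-2)-1-1} = \mP^{2d-4}$ — but the claim is $\mP^{d-1}$, so I must instead note that $\Pi$ and $\Pi'$ meet, and $\dim\langle\Pi,\Pi'\rangle = \dim\Pi + \dim\Pi' - \dim(\Pi\cap\Pi')$; combining with the direct computation $\dim\langle\Pi,\Pi'\rangle = 2d-4 - (\text{amount of dependency from }h^0(p+p')=2)$ pins down both $(2)$ and $\dim\Theta$. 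For $(4)$, $\Theta = \Pi\cap\Pi'$ being disjoint from $\phi_{|\eta|}(C)$ is equivalent to: no point $x\in C$ has $\phi_{|\eta|}(x)$ in both spans, i.e. $h^0(\eta - \sum m_i - x) = h^0(\eta - \sum m_i)$ and likewise for $n$; this is where \emph{generality} of $(C,\theta,m,n)$ enters, and I would argue it by a dimension count on the family, or by exhibiting one example (the $d=3$ case, as the paper suggests) and invoking openness.

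For $(5)$: the projection $\pi_\Theta$ composed with $\phi_{|\eta|}$ is a morphism on all of $C$ precisely because $\Theta\cap\phi_{|\eta|}(C)=\emptyset$ by $(4)$. That $f$ contracts $\{m_1,\dots,m_{d-1}\}$ to a single point $a_m$ is immediate: all these points lie in $\Pi\supset\Theta$, so their images under projection from $\Theta$ land in the single point $\mathrm{Proj}_\Theta(\Pi) \in \mP^3$ (since $\dim\Pi - \dim\Theta - 1 = 0$ by the dimension count above, $\Pi$ projects to a point). Similarly $\Pi'$ projects to $a_n$. That $a_m$ is a point of multiplicity $d-1$ on $M = f(C)$ then follows because $f^{-1}(a_m)\supseteq\{m_1,\dots,m_{d-1}\}$, $d-1$ distinct points, and for general data these are reduced points of the fibre with independent tangent directions — again a generality statement, checkable on an example. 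Assertion $(6)$ is the analogue of Lemma \ref{lem:reconst}(6): the $d$ Weierstrass points $w_1,\dots,w_d$ satisfy $\sum w_i \in |\theta + g^1_2| = |\sL_W|$, and one shows $\phi_{|\eta|}(w_1),\dots,\phi_{|\eta|}(w_d)$ span only a $\mP^{d-1}$... no — the right statement is that after projecting, $f(w_1),\dots,f(w_d)$ become collinear. Concretely, $\eta - \sum w_i \sim \sL_{\theta+m+n}\otimes\sL_W - \sL_W \cdot(\text{stuff}) \sim \theta+m+n+ p+p'$, wait: $\eta - \sum_{i=1}^d w_i \sim \eta - \sL_W \sim \sL_{\theta+m+n} = \theta + m+n$, of degree $d+1$; hmm, that has $h^0 = 3$. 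Rather, collinearity of the $f(w_i)$ after projection from $\Theta$ means the $2$-plane they would span meets $\Theta$ in a line's worth less; I would track this through $h^0(\eta - \sum w_i - \Theta\text{-conditions})$. The cleanest route is to observe that $\sum w_i \sim \sum w'_i \pmod{g^1_2}$ is \emph{not} quite what is needed; instead I'd use that $w_1+\dots+w_d \in |\sL_W|$ and the other partition $w'_1+\dots+w'_d\in|\sL_W|$ as well, so these are two divisors in the same pencil $|\sL_W|$, and the line $L$ (resp. $L'$) is the image of the fibre of $\phi_{|\sL_W|}\colon C\to\mP^1$ — i.e. $L = \langle\phi_{|\eta|}(w_1),\dots,\phi_{|\eta|}(w_d)\rangle$ after projection, which is a line because $h^0(\eta - \sum w_i) = h^0(\sL_{\theta+m+n}) = 2$ forces $\langle\phi_{|\eta|}(w_i)\rangle$ to be a $\mP^{d-2}$ containing $\Theta$ appropriately. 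Disjointness of $L, L'$ and $o\notin L\cup L'$ is once more a generality assertion.

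\textbf{Main obstacle.} The genuine difficulty is not any single dimension count — those are all Riemann–Roch bookkeeping on line bundles of the form $\theta + (\text{a few points})$ on a hyperelliptic curve — but rather the systematic verification of the \emph{transversality/generality} statements packaged into $(4)$, the reducedness and independence of the contracted fibres in $(5)$, and the disjointness clauses in $(6)$. These cannot be read off from Riemann–Roch alone; they require either (a) a careful analysis of the Wahl/osculating behaviour of $\phi_{|\eta|}$ along $C$, controlling when $\Theta$ can become secant or tangent to $\phi_{|\eta|}(C)$, or (b) the degeneration strategy the authors themselves signpost — namely, reduce to the $d=3$ (genus $2$) case, verify everything there by explicit coordinates on $\phi_{|\eta|}\colon C\hookrightarrow\mP^4$, and then invoke the openness of the locus in $\sS^{+,\mathrm{hyp}}_{d-1,2}$ on which the stated non-degeneracies hold, together with irreducibility of that moduli space, to propagate to all $d$. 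I expect the write-up to follow route (b), since it keeps the curve-theoretic input minimal and matches the phrasing "[t]he reader can easily follow the proof... by considering the case where $C$ has genus $2$."
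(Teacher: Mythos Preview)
Your Riemann--Roch bookkeeping for (1)--(3) and the projection argument for (5) are essentially what the paper does, though you should correct the geometric Riemann--Roch slip in (2): for the embedding by $|\eta|$ with $h^0(\eta)=d+2$, the span of an effective divisor $D$ has dimension $(d+1)-h^0(\eta-D)$, so $h^0(\eta-\sum m_i-\sum n_i)=h^0(p+p')=2$ gives $\mP^{d-1}$ directly, not $\mP^{2d-4}$.

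The genuine gap is in your ``Main obstacle'' section. You assert that (4) and the disjointness in (6) ``cannot be read off from Riemann--Roch alone'' and propose a degeneration to $d=3$ plus openness; the paper does \emph{not} do this, and in fact both statements follow from direct linear-algebra arguments. For (4): a point $\phi_{|\eta|}(p)\in\Theta=\Pi\cap\Pi'$ would have to be a common base point of $|\eta-\sum m_i|=|\theta+g^1_2+m|$ and $|\eta-\sum n_i|=|\theta+g^1_2+n|$; but the paper already knows (Lemma~\ref{fatt}(1), or equivalently Lemma~\ref{lem:reconst}(1)) that $|\theta+m+n|$, hence each of these, is base-point-free, so $\Theta\cap\phi_{|\eta|}(C)=\emptyset$ with no further generality needed beyond $m'\neq n$. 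For the disjointness of $L$ and $L'$ in (6): the paper argues by contradiction. If $L\cap L'\neq\emptyset$, then the two $(d-1)$-planes $\langle\phi_{|\eta|}(w_1),\dots,\phi_{|\eta|}(w_d)\rangle$ and $\langle\phi_{|\eta|}(w'_1),\dots,\phi_{|\eta|}(w'_d)\rangle$ (each already containing $\Theta$) would together span only a hyperplane, forcing $\sum_{s}w_s+\sum_{s}w'_s\in|\eta|$; since both halves lie in $|\theta+g^1_2|$, this gives $\eta\sim 2\theta+2g^1_2=K_C+2g^1_2$, while $\eta=K_C+g^1_2+m+n$, whence $m+n\sim g^1_2$ --- contradicting the standing assumption that $m,n$ are general. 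So again the only ``generality'' is the explicit, easily-stated condition $m+n\notin g^1_2$, used as a hypothesis rather than verified by specialization.

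The sentence about genus $2$ before the lemma is a reading aid, not a proof strategy: the paper gives the argument for all $d$ uniformly. Your proposed route~(b) would work, but it is unnecessary and obscures the fact that the non-degeneracy conditions in (4) and (6) are equivalent to clean divisorial identities on $C$.
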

\begin{proof} 
 By Riemann-Roch's theorem it holds that $\phi_{|\eta|}\colon C\to\mP^{d+1}$ is an embedding. 
It is easy to show that 
$$
\eta-\sum_{i=1}^{d-1}n_i-\sum_{i=1}^{d-1}m_i\simeq\theta +p+p'+n-\sum_{i=1}^{d-1}m_i\simeq p+p'.
$$
\noindent 
Hence $h^0(C,\sO_C(\eta-\sum_{i=1}^{d-1}n_i-\sum_{i=1}^{d-1}m_i))=2$. By Geometric Riemann Roch theorem this implies that the linear span of $\phi_{|\eta|}(m_1)$,..., $\phi_{|\eta|}(m_{d-1})$,$ \phi_{|\eta|}(n_1$),..., $\phi_{|\eta|}(n_{d-1})$ is a $\mP^{d-1}<\mP^{d+1}$. By Lemma \ref{lem:reconst} $(1)$ $h^0(C,\sO_C(\eta-\sum_{i=1}^{d-1}m_i))=3$. It follows that $\phi_{|\eta|}(m_1),..., \phi_{|\eta|}(m_{d-1})$ generates a $d-2$ linear subspace $\Pi$ of $\mP^{d-1}$ and by the analogue reason $ \phi_{|\eta|}(n_1),..., \phi_{|\eta|}(n_{d-1})$ generates a $d-2$ linear subspace $\Pi'<\mP^{d-1}$. By the Grassmann formula the subspace $W$ of $H^0(C,\sO_C(\eta))$ generated by the subspaces $H^0(C,\sO_C( \eta-\sum_{i=1}^{d-1} m_i))$ and $H^0(C,\sO_C( \eta-\sum_{i=1}^{d-1} n_i))$ has dimension $4$. Then $\rm{dim}{\rm{Ann}}(W)=d-2$ and $\rm{dim}H^0(C,\sO_C( \eta))^{\vee}/{\rm{Ann}}(W)=4$. Denote by $\Theta:=\mP({\rm{Ann}}(W))$ and set $U^{\perp}:= H^0(C,\sO_C( \eta))^{\vee}/{\rm{Ann}}(W)$. Then $\Theta=\Pi\cap \Pi'$ is of dimension $d-3$.

Let us consider the linear projection $\pi_{\Theta}\colon \mP^{d+1}\dashrightarrow \mP(U^{\perp})=\mP^3$ from the subspace $\Theta$. We set $f\colon C\dashrightarrow \mP^3$ to be the composition of $\phi_{|\eta|}$ followed by the projection $\pi_{\Theta}$. It is easy to show that $\phi_{|\eta|}(m_i)\not \in\Theta$ and that $\phi_{|\eta|}(n_i)\not \in\Theta$ and more generally that $\Theta\cap\phi_{|\eta|}(C)=\emptyset$ since Lemma \ref{fatt} $(1)$ and the remark that $| n+\sum_{i=1}^{d-1}n_i |=|\eta-\theta-g^1_2|$ is base point free.

 This implies that $f\colon C\to\mP^3$ is a morphism. Moreover since the span of $\phi_{|\eta|}(m_1),..., \phi_{|\eta|}(m_{d-1})$ is of dimension $d-2$ and it contains $\Theta$  then $f(m_1)=f(m_2)=...=f(m_{d-1})=a_{m}\in M\subset\mP^3$ and by the same argument $f(n_1)=f(n_2)=...=f(n_{d-1})=a_n\in M\subset\mP^3$. Moreover $a_m\neq a_n$ and it clearly holds that $M$ has degree $2d$. Then $M$ is a singular curve with two points of multiplicity $d-1$ and $f\colon C\to M$ is the normalisation morphism since $C$ is of genus $d-1$.

Since $\eta- (w_1+...+w_d)\sim \eta-( w'_1+...+w'_d)$ is the $g^1_d$ given by $\sL_{\theta}$ it follows that $\phi_{|\eta|}(w_1),..., \phi_{|\eta|}(w_{d})$ generate a $d-1$ linear subspace and analogously for $\phi_{|\eta|}(w'_1),..., \phi_{|\eta|}(w'_{d})$. Since  $\eta- (w_1+...+w_d +\sum_{i=1}^{d-1}m_i)$ is effective then the images of the Weierstrass points $f(w_1)$,..., $f(w_d)$ belong to a line $L\subset\mP^3$ and analogousely  $f(w'_1)$,..., $f(w'_d)$ belong to a line $L'\subset\mP^3$. More precisely, by the fact that
$$n+\sum_{i=1}^{d-1}n_i \in|\eta-w_1+...+w_d|\ni m+\sum_{i=1}^{d-1}m_i$$ 
it holds that $\phi_{|\eta|}(w_1),..., \phi_{|\eta|}(w_d)$ span a $d-1$-plane inside $\mP^{d+1}$ and that $\phi_{|\eta|}(w_1)$,...,$ \phi_{|\eta|}(w_d)$, $\phi_{|\eta|}(m)$ span a hyperplane section $\Pi_m$ which contains $\phi_{|\eta|}(m_1),..., \phi_{|\eta|}(m_{d-1})$ while $\phi_{|\eta|}(w_1),..., \phi_{|\eta|}(w_d),\phi_{|\eta|}(n)$ span a hyperplane section $\Pi_n$ which contains $\phi_{|\eta|}(n_1),..., \phi_{|\eta|}(n_{d-1})$. Hence the $d-1$-plane spanned by  $\phi_{|\eta|}(w_1)$,..., $\phi_{|\eta|}(w_d)$ contains $\Theta$. The same holds for $\phi_{|\eta|}(w'_1)$,..., $\phi_{|\eta|}(w'_d)$.

We claim that $L\cap L'=\emptyset$. By contradiction assume that $L\cap L'\neq \emptyset$. Then there exists a $\mP^{d-2}$ containing $\Theta$ such that $\mP^{d-2}\subset \langle \phi_{|\eta|}(w_1),..., \phi_{|\eta|}(w_d)\rangle$ and $\mP^{d-2}\subset \langle \phi_{|\eta|}(w'_1),..., \phi_{|\eta|}(w'_d)$. Then there exists a $w_i$ and a $w'_j$, $1\leq i,j\leq d$, such that $\langle \mP^{d-2}, w_i,w'_j\rangle$ is an hyperplane.  Since $\langle\Theta, \phi_{\eta}(w_i)\rangle=\langle \phi_{|\eta|}(w_1),..., \phi_{|\eta|}(w_d)\rangle $ and $\langle\Theta, \phi_{\eta}(w'_i)\rangle=\langle \phi_{|\eta|}(w'_1),..., \phi_{|\eta|}(w'_d)\rangle$ it follows that $\eta\sim \sum_{s=1}^{d}w_s+ \sum_{s=1}^{d}w'_s$. Then it easily follows that $\sum_{s=1}^{d}w'_s\sim n+\sum_{i=1}^{d-1}n_i$. Since $\sum_{s=1}^{d}w'_s\sim \theta+g^1_2$ and since $n+\sum_{i=1}^{d-1}n_i\sim n+m+\theta$ it follows that $n+m$ is the $g^1_2$: a contradiction.
\end{proof}

\subsection{The quadric associated to a general element of  $\sS^{+,{\rm{hyp}}}_{g,2}$}
We consider the rational map induced by $\sL_{\theta+m+n}$ and respectively by $\sL_W$: $$\phi_{\sL_{\theta+m+n}}\colon C\dashrightarrow \mP(H^0(C,\sL_{ \theta+m+n})^\vee),\, \phi_{\sL_W}\colon C\dashrightarrow\mP(H^0(C,\sL_{ W})^{\vee}).$$ By Lemma \ref{fatt} both are morphisms. We denote by
$$
\Phi\colon C\to\mP(H^0(C,\sL_{ \theta+m+n})^\vee)\times  \mP(H^0(C,\sL_{ W})^{\vee})
$$ the product morphism, that is $\Phi= \phi_{\sL_{\theta+m+n}}\times \phi_{\sL_W}$. We denote by $f_{\theta,m,n}\colon C\to \mP^3=\mP(U^{\perp})$ the morphism constructed in Lemma \ref{degreedlinear} $(5)$ too. We stress that the Lemma below is crucial for the rationality result. 

 \begin{lem} \label{quadica} Let $[(C,\theta,m,n)]\in  \sS^{+,{\rm{hyp}}}_{d-1,2}$ be a general element. Then the rational map $\Phi\colon C\to \mP(H^0(C,\sL_{ \theta+m+n})^\vee)\times \mP(H^0(C,\sL_{ W})^{\vee})$ is a morphism of degree $1$. Moreover there exists an embedding $\iota\colon \mP(H^0(C,\sL_{ \theta+m+n})^\vee)\times  \mP(H^0(C,\sL_{ W})^\vee)\to\mP^3=\mP(U^{\perp})$ such that $f_{\theta,m,n}=\iota\circ \Phi$.
  \end{lem}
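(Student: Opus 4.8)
The plan is to combine the two morphisms $\phi_{\sL_{\theta+m+n}}$ and $\phi_{\sL_W}$ into the map to $\mathbb P^3$ of Lemma \ref{degreedlinear} by identifying line bundles. First I would observe that $\sL_{\theta+m+n}\otimes\sL_W\cong\sO_C(\eta)$ by the formula displayed just before the statement of Lemma \ref{degreedlinear}, together with $2\theta\sim K_C$. Hence the multiplication of sections gives a linear map $H^0(C,\sL_{\theta+m+n})\otimes H^0(C,\sL_W)\to H^0(C,\sO_C(\eta))$, and dually a linear embedding $\mathbb P(U^{\perp})=\mathbb P^3\hookleftarrow$ of the Segre variety $\mathbb P(H^0(\sL_{\theta+m+n})^\vee)\times\mathbb P(H^0(\sL_W)^\vee)$ provided that map is surjective onto a $4$-dimensional subspace of $H^0(\sO_C(\eta))$. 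That subspace should be exactly $W$ (in the notation of the proof of Lemma \ref{degreedlinear}, the span of $H^0(\eta-\sum m_i)$ and $H^0(\eta-\sum n_i)$): indeed $\sL_{\theta+m+n}\cdot s$ for $s$ running over the pencil $|\sL_W|=|\theta+g^1_2|$ sweeps out sections of $\eta$ vanishing along a member of $|\theta+g^1_2|$, and one checks directly that $|\eta-(\theta+g^1_2)|=|\theta+m+n|$ (since $\eta-\theta-g^1_2\sim\theta+m+n$) and symmetrically, so the image is $H^0(\eta-\sum_i m_i)+H^0(\eta-\sum_i n_i)$, which has dimension $4$ by the Grassmann computation already done in Lemma \ref{degreedlinear}. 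This produces the Segre embedding $\iota$ onto $\mathbb P(U^\perp)$, and by construction of the multiplication map $\iota\circ\Phi$ and $f_{\theta,m,n}$ are both given, coordinate-wise, by the same collection of sections of $\eta$, whence $f_{\theta,m,n}=\iota\circ\Phi$.

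Second, granting the identification $f_{\theta,m,n}=\iota\circ\Phi$, the statement that $\Phi$ has degree $1$ follows from the fact that $f_{\theta,m,n}\colon C\to M\subset\mathbb P^3$ is the normalisation morphism, already established in Lemma \ref{degreedlinear}(5): since $\iota$ is an embedding, $\Phi$ and $f_{\theta,m,n}$ have the same degree onto their images, namely $1$. So the real content is the existence of $\iota$, i.e. the surjectivity of the multiplication map onto a $4$-dimensional space and the fact that $\Phi$ is a morphism (i.e. $\sL_{\theta+m+n}$ and $\sL_W$ are base-point free, which is Lemma \ref{fatt}(1)(2)) together with the injectivity/non-degeneracy needed for $\iota$ to be a closed immersion of the Segre threefold.

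The step I expect to be the main obstacle is verifying that the image of $\Phi$ actually spans $\mathbb P(U^\perp)$, equivalently that the image of the multiplication map $H^0(\sL_{\theta+m+n})\otimes H^0(\sL_W)\to H^0(\sO_C(\eta))$ is exactly $W$ and not a proper subspace, and that $\Phi$ is injective on points and tangent vectors so that the Segre embedding $\iota$ is well-defined on the image. For the spanning, I would argue that the image contains $H^0(\eta-\sum_i m_i)$: the pencil $\sL_W$ has $|\sL_W|=|w_1+\dots+w_d|\ni$ a divisor $D$ with $D+(\text{a member of }|\theta+m+n|)\sim\eta$, and varying over the two obvious members of $|\theta+g^1_2|$ (the ones through the $m_i$'s and through the $n_i$'s, using Lemma \ref{fatt}(3)) I recover the two $3$-dimensional spaces whose span is $W$; a dimension count then forces the image to be all of $W$. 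For injectivity of $\Phi$, I would use that $f_{\theta,m,n}$ is already the normalisation of $M$, so it is injective away from the two singular points $a_m,a_n$, and at those points the two factors $\phi_{\sL_{\theta+m+n}}$ and $\phi_{\sL_W}$ separate the preimages because the $m_i$ (resp. $n_i$) are mapped to distinct points by $\phi_{\sL_W}$ while collapsed by the first factor and vice versa — this is where generality of $[C,\theta,m,n]$ and the distinctness assertions of Lemma \ref{fatt} and Lemma \ref{degreedlinear} are used. Once $\Phi$ is shown to be a morphism, birational onto its image, and of bidegree making $\Phi(C)$ nondegenerate in the Segre threefold, the embedding $\iota$ is the Segre embedding and the equality $f_{\theta,m,n}=\iota\circ\Phi$ is immediate from the construction via multiplication of sections.
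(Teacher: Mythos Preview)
Your approach is the same as the paper's---multiplication map, identification of its image with the $4$-dimensional subspace $W$ from Lemma~\ref{degreedlinear}, then composition---but you are missing the one tool that dissolves what you call ``the main obstacle.'' The paper observes that the multiplication map
\[
\mu\colon H^0(C,\sL_{\theta+m+n})\otimes H^0(C,\sL_W)\longrightarrow H^0(C,\sO_C(\eta))
\]
is \emph{injective} by Castelnuovo's base-point-free pencil trick: the kernel is $H^0(C,\sL_{\theta+m+n}\otimes\sL_W^{-1})=H^0(C,\sO_C(m+n-g^1_2))$, which vanishes since $m+n$ is not in the $g^1_2$ for a general pair. Injectivity immediately gives that the image $U$ is $4$-dimensional and isomorphic to the tensor product, so the Segre map $\iota\colon\mP^1\times\mP^1\hookrightarrow\mP(U^\vee)=\mP^3$ is automatic---it is just the standard Segre embedding of a smooth quadric surface, and needs no separate verification of non-degeneracy or injectivity of $\Phi$ to be well-defined. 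Your attempt to exhibit the image as $H^0(\eta-\sum m_i)+H^0(\eta-\sum n_i)$ by hand is not wrong in spirit, but the products $s\cdot H^0(\sL_{\theta+m+n})$ you write down are $2$-dimensional, not $3$-dimensional, so the bookkeeping you sketch does not close as written.

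Once $\mu$ is injective, $\iota\circ\Phi$ is the map given by the linear system $U\subset H^0(\eta)$, and the paper finishes by invoking Lemma~\ref{degreedlinear}(3) to identify $\mP(\mathrm{Ann}(U))$ with $\Theta$, hence $U$ with $W$ and $\iota\circ\Phi$ with the projection $f_{\theta,m,n}$. The degree-$1$ claim then follows exactly as you say, from Lemma~\ref{degreedlinear}(5). So the architecture of your proof is correct; what is missing is the single line invoking the free pencil trick, which replaces the entire paragraph you flag as the hard step.
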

  \begin{proof}
 By the equation (\ref{itiseta}) $\sL_{ W}\otimes \sL_{ \theta+m+n}$ is linearly equivalent to $\sO_C(\eta)$. 

We consider the standard multiplication map:
 $$
 \mu\colon H^0(C,\sL_{ \theta+m+n})\otimes H^0(C,\sL_{ W})\to H^0(C,\sO_C(\eta))
 $$
By the Castelnuovo's free pencil trick it is an injection. Set $$U:=\mu ( H^0(C,\sL_{ \theta+m+n}) \otimes H^0(C,\sL_{ W}))$$ and note that $U$ is isomorphic to $H^0(C,\sL_{ W})\otimes H^0(C,\sL_{ \theta+m+n})$.  By Lemma \ref{degreedlinear} $(3)$ the projectivization of the annihilator subspace of $U$ is $\Theta$. Hence $U^\vee=U^{\perp}$. This implies that for the projection from $\Theta$ it holds $\pi_{\Theta}\colon   \mP^{d+1}\setminus \Theta\to\mP^3=\mP(U^{\vee})$. By Lemma \ref{degreedlinear} the claim follows where 
$$\iota\colon  \mP( H^0(C,\sL_{ W})^\vee))\times\mP(H^0(C,\sL_{ \theta+m+n})^\vee))\to\mP(U^{\vee})$$ comes from the dual of the isomorphism $U\simeq H^0(C,\sL_{ W})\otimes H^0(C,\sL_{ \theta+m+n})$ induced by the multiplication map.
 \end{proof}

\begin{defn}\label{eccola}
The surface $\mP(H^0(C,\sL_{ \theta+m+n})^\vee) \times \mP (H^0(C,\sL_{ W})^\vee)$ is denoted by $Q_{C,\theta, m,n}$. We denote by $S_{C,\theta,m,n}$ the quadric
 $\iota(Q_{C,\theta, m,n})\subset\mP^3$.
 \end{defn}
  
The fact that $m$ and $n$ are general points is important in order the morphism $\Phi:= \phi_{ |\sL_{ \theta+m+n}|} \times \phi_{ |\sL_{W}| } \colon C\to  Q_{C,\theta, m,n}$ to be of degree one. It is also true by the above discussion that the points $n_1,...,n_{d-1}$ such that $\sum_{i=1}^{d-1}n_i\in|\theta +m|$ are all mapped by $\Phi$ to the same point $\iota^{-1}(a_n)$ and similarly $m_1,...,m_{d-1}$ map to the same point $\iota^{-1}(a_m)$. 
We have shown that the image  $\Phi(C)\subset Q_{C,\theta, m,n}$ is a curve of class $|(d,d)|$ with two points of multiplicity $d-1$.

The following Corollary is crucial for our rationality result. Denote by $|(1,0)|$ and $|(0,1)|$ respectively the linear systems given by the two rulings of $Q_{C,m,n}$. From now on we do not distinguish between $\iota^{-1}(a_m), \iota^{-1}(a_n)$ and respectively their $\iota$-images in $S_{C,m,n}$.

\begin{cor}\label{Serveassai} The image $M=\Phi(C)$ is an element of $|(d,d)|$ with two singular points of multiplicity $d-1$, one on $a_m$ and the other on $a_n$. There exist two elements $L,L'\in |(0,1)|$ such that for the partition $W(C)=\{w_1,..w_{g+1}\} \bigsqcup \{w'_{1},..,w'_{g+1}\}$ it holds that $$\Phi(w_1),...,\Phi(w_d)\in L,\,\Phi (w'_1),...,\Phi (w'_d)\in L'.$$ Moreover if $T_{a_m}S_{{C,m,n}}$, $T_{a_n}S_{{C,m,n}}$ are the two tangent hyperplanes to $S_{C,m,n}$ at $a_m$ and respectively $a_n$ it holds that
$T_{a_m}S_{{C,m,n}_\mid Q_{C,m,n}}=\langle a_m, f_{\theta,m,n}(m)\rangle\cup \langle a_m, f_{\theta,m,n}(n')\rangle$ and $T_{a_n}Q_{{C,m,n}_\mid Q_{C,m,n}}=\langle a_n, n\rangle\cup \langle a_n, f_{\theta,m,n}(m')\rangle$, 
where $\langle a_m, f_{\theta,m,n}(m)\rangle,\langle a_n, f_{\theta,m,n}(n)\rangle\in |(1,0)|$ and $\langle a_m, f_{\theta,m,n}(n')\rangle, \langle a_n, f_{\theta,m,n}(m')\rangle$ $\in |(0,1)|$. Finally if $j_{C}\colon  \mP(H^0(C,\sL_{ W})^\vee)\to \mP(H^0(C,\sL_{ W})^\vee)$ is the involution induced by the hyperelliptic involution $J_C\colon C\to C$ then its two fixed points are given by $\pi_W(L)$ and  $\pi_W(L')$ where $\pi_W\colon Q_{C,m,n}\to \mP(H^0(C,\sL_{ W})^\vee$ is the natural projection.
\end{cor}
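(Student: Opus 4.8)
The plan is to deduce every assertion by projecting $Q_{C,m,n}=\mP(H^0(C,\sL_{\theta+m+n})^\vee)\times\mP(H^0(C,\sL_W)^\vee)$ onto its two factors, using two elementary facts: a ruling line of $Q_{C,m,n}$ is contracted by exactly one of $\phi_{\sL_{\theta+m+n}}$, $\phi_{\sL_W}$, and the fibre of $\phi_{\sL}$ through a point of $C$ is the unique member of the (base point free) pencil $|\sL|$ through that point. I will write $\pi_1$ for the first projection; consistency with the statement about $\pi_W$ forces $|(1,0)|$ to be the pencil of fibres of $\pi_1$ and $|(0,1)|$ that of the fibres of $\pi_W$. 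The first assertion is essentially a repackaging of earlier results: by Lemma~\ref{degreedlinear} and Lemma~\ref{quadica}, $M=\Phi(C)$ is irreducible, $\Phi\colon C\to M$ is the normalisation, and $f_{\theta,m,n}=\iota\circ\Phi$ with $\iota$ the embedding of $Q_{C,m,n}$ by $|(1,1)|$; since $\Phi^{*}\sO(1,0)=\sL_{\theta+m+n}$ and $\Phi^{*}\sO(0,1)=\sL_W$ both have degree $d$, the class of $M$ is $(d,d)$, and $a_m\neq a_n$ are the two points of multiplicity $d-1$ of Lemma~\ref{degreedlinear}~(5). To see there is nothing else, the genus identity $p_a(M)=(d-1)^2=(d-1)+2\binom{d-1}{2}$ shows these are the only singularities and each is an ordinary $(d-1)$-fold point.

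Next I would establish the Weierstrass lines. By the Notation introduced before Lemma~\ref{fatt} one has $\sL_W=\sO_C(w_1+\cdots+w_d)$, so $w_1+\cdots+w_d$ is itself a member of $|\sL_W|$, hence a whole fibre of $\phi_{\sL_W}$; thus $\phi_{\sL_W}(w_1)=\cdots=\phi_{\sL_W}(w_d)=:q_0$ and $\Phi(w_1),\dots,\Phi(w_d)$ lie on $L:=\pi_W^{-1}(q_0)\in|(0,1)|$. The complementary half of the partition gives $w'_1+\cdots+w'_d\sim\theta+g^1_2\in|\sL_W|$, so the same reasoning yields $L'\in|(0,1)|$ with $\Phi(w'_i)\in L'$, and $L\neq L'$ because the two Weierstrass divisors are distinct.

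For the tangent planes I would treat $a_m$ and obtain $a_n$ by the substitution $m\leftrightarrow n$, $m_i\leftrightarrow n_i$. Since $S_{C,m,n}$ is a smooth quadric, $T_{a_m}S_{C,m,n}\cap S_{C,m,n}$ is the union of the two rulings through $a_m$, i.e. the $\iota$-image of $\pi_1^{-1}(\pi_1(a_m))\cup\pi_W^{-1}(\pi_W(a_m))$. Because $a_m=\Phi(m_1)=\cdots=\Phi(m_{d-1})$ and $\sum_{i=1}^{d-1}m_i\sim\theta+n$ (Lemma~\ref{fatt}), one gets $\sL_{\theta+m+n}(-\sum_{i=1}^{d-1}m_i)=\sO_C(m)$, so the fibre of $\phi_{\sL_{\theta+m+n}}$ through $a_m$ is $m+\sum m_i$ and $\pi_1^{-1}(\pi_1(a_m))=\langle a_m, f_{\theta,m,n}(m)\rangle\in|(1,0)|$; likewise Lemma~\ref{fatt}~(3) gives $\sum_{i=1}^{d-1}m_i+n'\sim w_1+\cdots+w_d$, so $\sL_W(-\sum_{i=1}^{d-1}m_i)=\sO_C(n')$, the fibre of $\phi_{\sL_W}$ through $a_m$ is $n'+\sum m_i$, and $\pi_W^{-1}(\pi_W(a_m))=\langle a_m, f_{\theta,m,n}(n')\rangle\in|(0,1)|$. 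At $a_n$ the same computation with $\sum_{i=1}^{d-1}n_i\sim\theta+m$ and $\sum_{i=1}^{d-1}n_i+m'\sim w_1+\cdots+w_d$ yields $\langle a_n, f_{\theta,m,n}(n)\rangle\in|(1,0)|$ and $\langle a_n, f_{\theta,m,n}(m')\rangle\in|(0,1)|$.

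Lastly, for $j_C$: by Remark~\ref{polyhedra} the class $\theta$, hence $\sL_W=\sO_C(\theta+g^1_2)$, is invariant under the hyperelliptic involution $J_C$, so $J_C$ induces an involution $j_C$ of $\mP(H^0(C,\sL_W)^\vee)\cong\mP^1$, which is nontrivial since $\sL_W$ is not pulled back from $C/J_C$ (otherwise $w_1+\cdots+w_d$, a sum of distinct Weierstrass points, would be a sum of fibres of $g^1_2$). Hence $j_C$ has exactly two fixed points; and since each Weierstrass point is $J_C$-fixed, $j_C$ fixes $\phi_{\sL_W}(w_i)=\pi_W(L)$ and $\phi_{\sL_W}(w'_i)=\pi_W(L')$, which are distinct, so these are the two fixed points. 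I expect the main obstacle to be purely organisational — keeping the two rulings matched consistently with the two factors and invoking the correct one of the relations in Lemma~\ref{fatt}~(3) in each of the four line computations; the only genuinely non-formal inputs are the genus count that forces the singularities to be ordinary $(d-1)$-fold points and the non-descent of $\sL_W$ to the hyperelliptic quotient.
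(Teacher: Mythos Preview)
Your proof is correct and follows essentially the same route as the paper. The paper's proof is much terser: it cites Lemma~\ref{quadica} and Lemma~\ref{degreedlinear}~(6) for the first two assertions, and for the tangent planes it records the single computation $2m_1+\cdots+2m_{d-1}+m+n'\in|\eta|$ (and the analogous one at $a_n$), which is exactly your two ruling-by-ruling identifications added together as a $(1,1)$ divisor. Your treatment of $j_C$ and the genus count pinning down the singularities are more explicit than what the paper writes, but not different in substance.
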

\begin{proof} The first claims follows by Lemma \ref{quadica} and by Lemma \ref{degreedlinear} $(6)$. Finally an easy computation shows that $2n_1+2n_2+...+2n_{d-1} +n+m'\in |\eta|$ and that $2m_1+2m_2+...+2m_{d-1} +m+n'\in |\eta|$ hence the claim follows trivially by the proof of Lemma \ref{quadica}.\end{proof}

\section{Hyperelliptic curves and the quadric threefold}

We use the geometry of the $3$-fold quadric $Q$ to construct spin hyperelliptic curves with two marked points. First we recall some basic facts on rational curves on $Q$.

\subsection{Lines on the quadric threefold}
We need to recall the basic of the geometry on the $3$-fold quadric.

\subsubsection{The Hilbert scheme of lines of the $3$-fold quadric}

Let $\sK\subset\mP^5$ be the Pl\"ucker embedding of the Grassmannian $\mG (2,4)$ of the $2$-dimensional sub-vector spaces of a $4$-dimensional vector space. It is well-known that $\sK$ is a smooth quadric, called {\it{the Klein quadric}} and that the Hilbert scheme of planes contained inside $\sK$ is a disconnetted union of two components $A^{+}\cup A^{-}$ each of which is isomorphic to a $\mP^3$. Let $Q$ be a general hyperplane section. For any line $l\subset Q$ there exists a unique $[\Pi^+]\in A^{+}$ and a unique $[\Pi^{-}\in A^{-}]$ such that $l\subset \Pi^+$ and $l\subset \Pi^-$. On the other hand since $Q$ is a general one, for any plane of $\sK$, say $[\Pi^+]\in A^{+}$, there exists a unique $l\subset Q$ such that $l\subset \Pi^+$. By the universal property of Hilbert schemes it then follows the following well-known result:
\begin{lem}\label{hilblines} The Hilbert scheme ${\rm{Hilb}}^{Q}_{1}$ of lines of the $3$-fold quadric $Q$ is isomorphich to $\mP^3$. Moreover there exists an isomorphism $\iota\colon {\rm{Hilb}}^{Q}_{1}\to\mP^3$ such that $\Pi\in | \iota^{*}\sO_{\mP^3}(1)|$ iff there exists a line $l\subset Q$ such that
$$
\Pi=\Pi_l:=\{[r]\in {\rm{Hilb}}^{Q}_{1}\mid r=l\, {\rm{or}}\, r\cap l\neq\emptyset\}.
$$
\end{lem}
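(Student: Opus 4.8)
The statement has two parts: first that $\mathrm{Hilb}^Q_1 \cong \mathbb{P}^3$, and second that under a suitable such isomorphism $\iota$ the hyperplane class pulls back to the incidence divisors $\Pi_l$. My plan is to deduce both from the classical geometry of the Klein quadric $\sK$ and the two rulings $A^+,A^-$ of planes on it, exactly as set up in the paragraph preceding the statement.

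\emph{Step 1: Identify $\mathrm{Hilb}^Q_1$ with $A^+$ (or $A^-$).} A line $l \subset Q$ is in particular a line in $\mathbb{P}^4 = H \cap \mathbb{P}^5$, hence a point of $\sK$; since $l \subset Q$, the corresponding point of $\sK$ lies on the hyperplane section cutting out $Q$. For a general $Q$, each plane $\Pi^+ \in A^+$ meets $Q$ (a hyperplane section of $\sK$) in a line, and this gives a bijection $A^+ \to \mathrm{Hilb}^Q_1$ sending $[\Pi^+]$ to the unique line of $Q$ contained in $\Pi^+$. To see this is an isomorphism of schemes, not just a bijection of sets, I would invoke the universal property of the Hilbert scheme: the incidence variety over $A^+$ carries a tautological family of lines in $Q$, inducing a morphism $A^+ \to \mathrm{Hilb}^Q_1$; its inverse is constructed the same way from the family over $\mathrm{Hilb}^Q_1$, using that through a line $l\subset Q$ there is a unique plane of $\sK$ in each family. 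Since $A^+ \cong \mathbb{P}^3$ this gives the first claim, with $\iota$ the resulting composite.

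\emph{Step 2: Identify the hyperplane class.} On $A^+ \cong \mathbb{P}^3$ the hyperplane class is the unique generator of $\mathrm{Pic}$. I must check the incidence divisor $\Pi_l = \{[r] : r = l \text{ or } r\cap l \neq \emptyset\}$ is a hyperplane under $\iota$. Geometrically, $\Pi_l$ corresponds to those planes $\Pi^+ \in A^+$ whose associated line $r$ meets the fixed line $l$; two lines of $Q$ in the same ruling-family meet iff the corresponding planes in $A^+$ intersect, which on the $\mathbb{P}^3$ of planes in one ruling of a four-dimensional quadric is the condition of lying on a fixed hyperplane (this is the standard fact that the incidence of lines on a smooth quadric threefold is linear on each $\mathbb{P}^3$ of the Hilbert scheme — equivalently $S_q$-type quadrics, but here the plain incidence). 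Concretely one computes: fixing $l$, the map $[\Pi^+]\mapsto \dim(\Pi^+ \cap \Pi^+_l)$ jumps exactly on a hyperplane, because on the Grassmannian side $\Pi^+$ and $\Pi^+_l$ are both planes in $\sK$ and their intersection pattern is governed by a single linear (Plücker) condition. Hence $\iota_*\Pi_l \in |\sO_{\mathbb{P}^3}(1)|$; conversely every hyperplane arises this way since the $\Pi_l$ already sweep out a linear system of the right dimension as $l$ varies over the $3$-dimensional $\mathrm{Hilb}^Q_1$, and no hyperplane pencil can be missed by a dimension count.

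\emph{Main obstacle.} The set-theoretic bijection is immediate; the real content is (a) upgrading it to a scheme isomorphism — which I expect to handle cleanly via the universal property, checking the family over $A^+$ is flat with the right fibers — and (b) verifying the incidence divisor $\Pi_l$ is genuinely a \emph{hyperplane} (degree one) rather than a higher-degree divisor that merely happens to be irreducible. For (b) the cleanest route is to exhibit $\Pi_l$ as the intersection of $\sK$ (or $A^+$) with a linear space in Plücker coordinates: the condition ``$r$ meets $l$'' is that the $2$-planes of the underlying $4$-space have a common vector, which on the Klein quadric is cut by the single linear form $\langle \,\cdot\,, \text{(Plücker point of } l)\rangle$ vanishing — this linearity is exactly what makes $\Pi_l$ a hyperplane section, and restricting to $A^+ \subset \sK$ keeps it linear. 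Packaging this as ``well-known'' with a reference (Griffiths–Harris, or the standard treatment of $\mathbb{G}(2,4)$) is, I expect, what the author will do, and is the step where one must be careful not to wave hands.
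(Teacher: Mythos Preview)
Your Step 1 is exactly the paper's argument: realize $Q$ as a general hyperplane section of the Klein quadric $\sK=\mathbb{G}(2,4)\subset\mP^5$, use the two $\mP^3$-families $A^\pm$ of planes on $\sK$, and match a plane $\Pi^+\in A^+$ with the unique line $\Pi^+\cap Q$; the universal property upgrades the bijection to an isomorphism. (One slip: a line $l\subset Q$ is a \emph{line} of $\sK$, not a \emph{point} of $\sK$; your sentence ``hence a point of $\sK$'' is a misstatement, though the rest of the paragraph proceeds correctly.) The paper does not prove the second part at all; it declares the whole lemma ``well-known'' after the paragraph you are reproducing.

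Your Step 2, however, contains a genuine confusion. There is no ``ruling-family'' for lines on a smooth quadric threefold: $\mathrm{Hilb}^Q_1$ is irreducible. More importantly, two \emph{distinct} planes $\Pi^+_l,\Pi^+_r\in A^+$ \emph{always} meet in exactly one point of $\sK$ (in the model $A^+\cong\mP^3$ they correspond to points $P_l,P_r$ and the intersection is the Pl\"ucker point of the line $\overline{P_lP_r}$). So the function $[\Pi^+]\mapsto\dim(\Pi^+\cap\Pi^+_l)$ does not jump, and your Pl\"ucker sentence at the end is written for lines in $\mP^3$ (points of $\sK$), not for the lines $l,r\subset Q\subset\mP^4$ in question.

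The correct linearity argument, in the paper's set-up, is this. Writing $Q=\sK\cap H$ one has $l=\Pi^+_l\cap H$ and $r=\Pi^+_r\cap H$, hence $l\cap r=(\Pi^+_l\cap\Pi^+_r)\cap H$. The point $\Pi^+_l\cap\Pi^+_r\in\sK\subset\mP^5$ has Pl\"ucker coordinates $p_{ij}=a_ib_j-a_jb_i$, bilinear in the $\mP^3$-coordinates $a=P_l$, $b=P_r$; the condition that this point lie on the hyperplane $H$ is therefore linear in $b$ for fixed $a$. Thus $\Pi_l$ is a hyperplane in $A^+\cong\mP^3$, and since the assignment $l\mapsto\Pi_l$ is visibly injective and $\dim\mathrm{Hilb}^Q_1=3$, every hyperplane arises. (This is the null-correlation $\nabla$ the paper invokes immediately after the lemma.) Your outline is salvageable once you replace the faulty incidence criterion by the observation that the issue is whether the always-existing intersection point $\Pi^+_l\cap\Pi^+_r$ lands on $H$.
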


\noindent 
By Lemma \ref{hilblines} ${\rm{Hilb}}^{Q}_{1}$ is endowed with a null-correlation  $\nabla\colon {\rm{Hilb}}^{Q}_{1}=\mP^3\to \check\mP^3$ given by $[l]\mapsto [\Pi_l]$ see c.f. \cite[Section 3]{CF}.

Since $Q\subset \mP^4$ there is a natural embedding $\zeta\colon {\rm{Hilb}}^{Q}_{1}\to {\rm{Hilb}}^{\mP^4}_{1}=\mG (2,5)$. We will use the following result by Hiroshi Tango:
\begin{thm}\label{tango} We identify $\mG (2,5)$ to its Pl\"ucker embedding inside $\mP^9$. 
The natural embedding $\zeta\colon {\rm{Hilb}}^{Q}_{1}=\mP^3\to \mG (2,5)\subset\mP^9$ is given by the $2$-Veronese embedding of $\mP^3$ inside $\mP^9$. 
Moreover the rank-$2$ vector bundle corresponding to the null-correlation $\nabla$ is the pull-back of the universal rank-$2$ vector bundle on $\mG (2,5)$
\end{thm}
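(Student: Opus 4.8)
The plan is to recognise $\zeta$ from the linear system it defines. Put $L:=\zeta^{*}\sO_{\mathbb{G}(2,5)}(1)$, the restriction of the Plücker polarization to $\mathrm{Hilb}^{Q}_{1}\cong\mP^{3}$ (identified as in Lemma~\ref{hilblines}); its sections are the ten Plücker coordinates of the lines of $Q$. Since $\Pic\mP^{3}=\mZ$ we have $L=\sO_{\mP^{3}}(k)$ for some $k\geq1$, and the first assertion is equivalent to $k=2$ together with $\zeta$ being defined by the \emph{complete} system $|\sO_{\mP^{3}}(2)|$ — then $\zeta$ is the $2$-Veronese up to a projectivity. For the second assertion I would compute the Chern classes of the restricted tautological rank-$2$ bundle and match them with the bundle of $\nabla$. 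Throughout I would use the classical dictionary $Q\cong\mathrm{LG}(2,4)$ identifying the smooth quadric threefold with the Lagrangian Grassmannian, and the induced action of $\Aut(Q)=\mathrm{PSO}(5)\cong\mathrm{PSp}(4)=\mathrm{Spin}(5)/\{\pm1\}$.

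First I would pin down the pullbacks of the Schubert classes of $\mathbb{G}(2,5)$. By Lemma~\ref{hilblines} a line $\ell\subset\mP^{3}=\mathrm{Hilb}^{Q}_{1}$ is an intersection $\Pi_{l_{1}}\cap\Pi_{l_{2}}$, i.e.\ the set of lines of $Q$ meeting two general — hence skew — lines $l_{1},l_{2}\subset Q$; these span a $3$-plane $\Lambda$, and $S:=Q\cap\Lambda\cong\mP^{1}\times\mP^{1}$ carries $l_{1},l_{2}$ in one ruling, so $\ell$ is the opposite ruling and sweeps out $S$. Since the Plücker degree along a pencil of lines equals the degree of the scroll it sweeps, $L\cdot\ell=\deg S=2$, i.e.\ $\zeta^{*}\sigma_{1}=2H$ where $H$ is the hyperplane class. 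The same bookkeeping yields $\zeta^{*}\sigma_{1,1}=2H^{2}$: the lines of $Q$ lying in a general hyperplane of $\mP^{4}$ are the two rulings of the quadric surface $Q\cap(\text{hyperplane})$, each of them a line of $\mP^{3}$. (As a cross-check, $\zeta^{*}\sigma_{2}=(2H)^{2}-\zeta^{*}\sigma_{1,1}=2H^{2}$.) In particular $k=2$; it remains to prove that $\zeta$ is linearly normal.

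For linear normality I would argue by representation theory. Let $U$ be the $4$-dimensional symplectic representation of $\mathrm{Sp}(4)=\mathrm{Spin}(5)$, with form $\omega$, and $W$ the $5$-dimensional representation, so $Q\subset\mP(W)$ is invariant. Since $\Aut(Q)$ is transitive on the lines of $Q$, $\mathrm{Hilb}^{Q}_{1}$ is a $\mathrm{PSp}(4)$-homogeneous space, hence $\mP$ of an irreducible $4$-dimensional representation; as $U$ is the only such, $\mathrm{Hilb}^{Q}_{1}\cong\mP(U)$ equivariantly. Thus $\zeta\colon\mP(U)\to\mP(\wedge^{2}W)$ is equivariant, and as $\zeta^{*}\sO(1)=\sO_{\mP(U)}(2)$ it factors as $\mP(U)\xrightarrow{v_{2}}\mP(\mathrm{Sym}^{2}U)\xrightarrow{\mP(\phi)}\mP(\wedge^{2}W)$ for a nonzero equivariant linear map $\phi$. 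Now $\mathrm{Sym}^{2}U\cong\mathfrak{sp}(4)$ and $\wedge^{2}W\cong\mathfrak{so}(5)\cong\mathfrak{sp}(4)$ are both irreducible of dimension $10$, so $\phi$ is an isomorphism by Schur's lemma; hence $\zeta(\mP^{3})$ spans $\mP^{9}$ and $\zeta$ is the complete $2$-Veronese embedding. (One can bypass representation theory here by checking directly that no nonzero alternating form on $W$ has every line of $Q$ isotropic, through a short case analysis on whether the form has rank $2$ or $4$ and using that the lines of $Q$ cover $Q$.)

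For the bundle statement let $\sS$ be the tautological rank-$2$ subbundle on $\mathbb{G}(2,W)$ and $E:=\zeta^{*}\sS^{\vee}$. The above gives $c_{1}(E)=\zeta^{*}\sigma_{1}=2H$ and $c_{2}(E)=\zeta^{*}c_{2}(\sS^{\vee})=\zeta^{*}\sigma_{1,1}=2H^{2}$, so $E(-1)$ has $c_{1}=0$ and $c_{2}=1$, the invariants of a null-correlation bundle. To see $E(-1)$ is the bundle of $\nabla$ I would work in $Q=\mathrm{LG}(2,U)$: the line of $Q$ attached to $[u]\in\mP(U)=\mathrm{Hilb}^{Q}_{1}$ is $\mP\bigl(u\wedge u^{\perp_{\omega}}\bigr)\subset\mP(\wedge^{2}_{0}U)=\mP(W)$, so $\zeta^{*}\sS$ is the rank-$2$ subbundle of $\wedge^{2}U\otimes\sO$ with fibre $u\wedge u^{\perp_{\omega}}$ over $[u]$; because $u\wedge u=0$ this equals $\sO_{\mP(U)}(-1)\otimes\bigl(\sK/\sO(-1)\bigr)$, where $\sK:=\langle u\rangle^{\perp_{\omega}}=\ker\bigl(U\otimes\sO\to\sO(1)\bigr)$, and $\sK/\sO(-1)=\ker\bigl(T_{\mP^{3}}(-1)\to\sO(1)\bigr)$ is exactly the null-correlation bundle of the form $\omega$. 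Since $\nabla$ is canonically built from $Q$ it is $\mathrm{PSp}(4)$-equivariant, so its defining form is $\omega$ up to scalar; hence $E(-1)$ is the bundle of $\nabla$ and $\zeta^{*}\sS^{\vee}$ is the pullback of the universal rank-$2$ bundle. The step I expect to be the real obstacle is not computational but the groundwork for the last two paragraphs — making the identification $\mathrm{Hilb}^{Q}_{1}\cong\mP(U)$ and the formula $l_{[u]}=\mP(u\wedge u^{\perp_{\omega}})$ precise and checking their equivariance, i.e.\ running the classical isomorphism $Q^{3}\cong\mathrm{LG}(2,4)$ carefully; after that, Schur's lemma and the Chern-class bookkeeping finish both halves.
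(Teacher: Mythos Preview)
The paper does not give a proof of its own here; its entire argument is the citation ``See \cite[Section~6]{Tan}''. So there is no in-paper proof to compare against, only Tango's original. Your proposal, by contrast, is a genuine self-contained proof and it is correct.

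A few comments on the content. The Schubert pullbacks $\zeta^{*}\sigma_{1}=2H$ and $\zeta^{*}\sigma_{1,1}=2H^{2}$ are computed correctly (for the second: the lines of $Q$ contained in a general hyperplane are the two rulings of the smooth quadric surface, hence two lines in $\mathrm{Hilb}^{Q}_{1}$). The Schur's lemma step is clean: $\mathrm{Sym}^{2}U$ and $\wedge^{2}W$ are both the $10$-dimensional adjoint representation under $\mathfrak{sp}_{4}\cong\mathfrak{so}_{5}$, so linear normality follows. The bundle identification via the Lagrangian picture is also right; the map $v\mapsto u\wedge v$ induces $\zeta^{*}\sS\cong\sO(-1)\otimes(\sK/\sO(-1))$ exactly as you say. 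You should be explicit, though, that the paper's phrase ``the universal rank-$2$ vector bundle'' is ambiguous about twist and duality: what you actually prove is that $\zeta^{*}\sS\cong\sO(-1)\otimes N$ with $N$ the null-correlation bundle, which is the precise statement.

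As for the comparison: Tango's paper classifies \emph{all} linear $\mP^{n-1}$'s inside $\mG(2,n+1)$ for every $n$, working essentially with the restricted universal bundle and its Chern classes; the quadric case is one instance of his general result. Your route through the accidental isomorphism $Q\cong\mathrm{LG}(2,4)$ and the representation theory of $\mathrm{Sp}(4)$ is specific to $n=4$ but more conceptual: it explains \emph{why} the null-correlation form appears (it is the symplectic form on $U$), and it gives linear normality and the bundle statement in one stroke. Tango's argument generalises; yours illuminates.
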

\begin{proof} See \cite[Section 6]{Tan}.
\end{proof}

\subsection{Conics and hyperplane sections}
We maintain notations of the Linear Algebra set up of the Introduction. Here we only recall that if  $q\subset Q$
 is a smooth conic inside the smooth quadric threefold $Q\subset \mP^4=\mP(V)$ and $\mP(W)$
  is the projective plane spanned by $q$ then $\mP({\rm{Ann}}(W))\subset\mP(V^{\vee})$ parameterises the pencil of hyperplanes which contain $\mP(W)$.

\subsection{Construction of the hyperelliptic curve} Let $[H]\in\mP(V^{\vee})$. We assume that:
\begin{enumerate}
 \item $[H]\not\in \check Q$
 \item $[H]\not\in\mP( {\rm{Ann}}(W) )  $
 \end{enumerate}
Set $\{x,y\}:=q\cap H$. We denote by $l_x,l_y\in |(1,0)|$ the two lines of the smooth quadric $Q_{H}$ such that $x\in l_x$ and $y\in l_y$. Analogously we denote by $m_x,n_y\in |(0,1)|$ the two lines of the other ruling such that $x\in m_x$ and $y\in n_y$. Let $R\in |(1,d-1)|$ be a general element. Hence there exist $2d-2$ mutually distinct points $x_1,..., x_{d-1}, y_1,..., y_{d-1}$ such that $R\cap l_x=\{x_1,..., x_{d-1}\}$ and $R\cap l_y=\{y_1,..., y_{d-1}\}$. We consider $C(R)\subset R\times q\simeq \mP^1\times\mP^1$ the $(2,d)$-correspondence given as in the Introduction. We will show that $C(R)$ is hyperelliptic. This will follow by using a dyadic structure intrinsically given by the couple $(Q,q)$. Moreover the couple will give also the partition of set of the Weierstrass points $W(C(R))$ to construct
 $\theta(R)$. 
\subsubsection{The dyadic structure}

We consider the subspace ${\rm{Ann}}(W)\subset V^{\vee}$ given by functionals vanishing over $W$. By construction it holds that the line $\mP({\rm{Ann}}(W))$ intersects the dual quadric ${\check{Q}}$ in two distinct points $[\Pi_z],[\Pi_{z'}]$. This means that there exists two points $z,z'\in Q$ such that $\Pi_z=T_{z}Q$, $\Pi_{z'}=T_{z'}Q$ and $\Pi_z\cap \mP(W)\cap Q=q=\Pi_{z'}\cap \mP(W)\cap Q$. Let $\Delta_H:=\Pi_z\cap Q_H$ and $\Delta_H':=\Pi_{z'}\cap Q_H$. Since $[H]$ is general it holds by direct computation that $\Delta_H$ and $\Delta_H'$ are smooth $(1,1)$ sections of $Q_H$.

\begin{figure}
\centering
\includegraphics[scale=0.8]{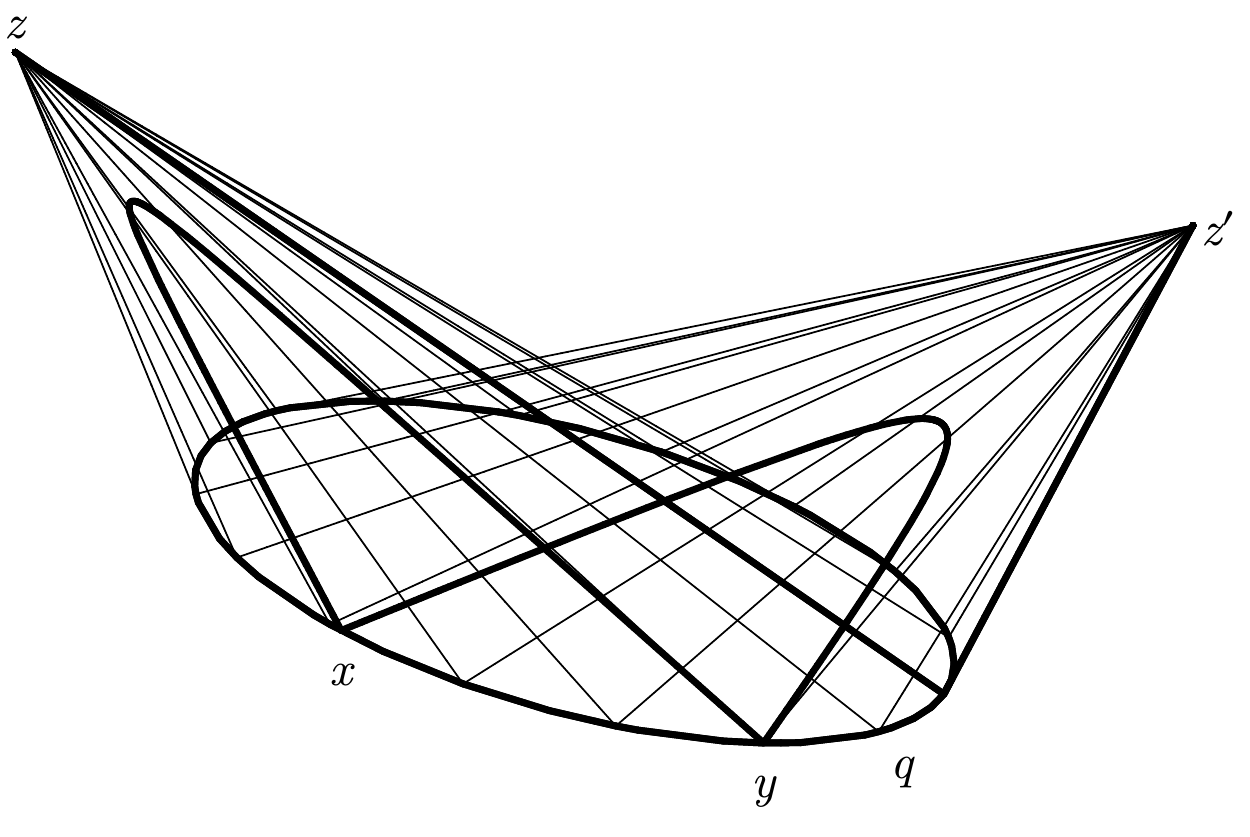}
\caption{The conic $q$}
\label{etichettaPerRiferimenti}
\end{figure}

\begin{defn} We call the curves $\Delta_H, \Delta_H'\subset Q_H$ {\it{the Weierstrass conics of the triple $(Q,q,H)$.}}
\end{defn}

By generality of $R$ it follows also that 
 $\Delta_H\cap R=\{s_1,..., s_d\}$, $\Delta_H'\cap R=\{s'_1,..., s'_d \}$ are transversal intersections. This forces $s_1,..., s_d, s'_1,..., s'_d$ to be $2d$ mutually distinct points.  Moreover let $l_{i}$, $l'_{i}$ be respectively the line 
 $\langle z,s_i\rangle$, $\langle z', s'_i \rangle$. Set $l_i \cap q:=\{\alpha_i \}$ and $l'_i\cap q:=\{\alpha'_i \}$. Then $l_i=\langle z,s_i\rangle=\langle s_i, \alpha_i\rangle$ and $l'_i= \langle z', s'_i \rangle=\langle s'_i,\alpha'_i\rangle$. We have constructed $2d$ points of ${\rm{Hilb}}^Q_1$. Set $w_i:=[s_i,\alpha_i]$, $w'_i:=[s'_i,\alpha'_i]$ for the corresponding points on $C(R)\subset R\times q$, $i=1,..., d$.

\begin{gen}\label{generality conditions}
We define the open subset $ |(1,d-1)|^{oo}\subset |(1,d-1)|^{o}$ given by those $[R]\in  |(1,d-1)|$ which satisfies the following conditions:
\label{gen}

\vspace{5pt}

\begin{enumerate}[(a)]
\item
$\Delta_H\cap l_x \cap R=\emptyset, \Delta_H\cap l_y\cap R=\emptyset$;
\item
$\Delta_H'\cap l_x \cap R=\emptyset, \Delta'_H\cap l_y\cap R=\emptyset$;
\item  $\Delta_H\cap R=\{s_1,..., s_d\}$  is a transversal intersection;
\item$\Delta_H'\cap R=\{s'_1,..., s'_d \}$ is a transversal intersection;
\item
 $R\cap l_x=\{x_1,..., x_{d-1}\}$ is a transversal intersection;
\item
$R\cap l_y=\{y_1,..., y_{d-1}\}$ is a transversal intersection.
\end{enumerate}
\end{gen}
\vspace{5pt}

\begin{lem}\label{hyperellipticity} 
If $R\in |(1,d-1)|^{oo}$ the correspondence $C(R)\subset R\times q$ is a smooth hyperelliptic curve of genus $d-1$.
\end{lem}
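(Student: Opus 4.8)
The plan is to verify three things about $C(R)\subset R\times q\simeq\mP^1\times\mP^1$: that it is smooth, that it has arithmetic genus $d-1$, and that the first projection $C(R)\to R\simeq\mP^1$ exhibits it as a $2$-to-$1$ cover, which forces hyperellipticity once the genus is $\geq 2$. First I would pin down the class of $C(R)$ in $\mathrm{Pic}(\mP^1\times\mP^1)$. By construction a point $[t,a]\in R\times q$ lies on $C(R)$ exactly when the line $\langle t,a\rangle$ lies on $Q$; for fixed $a\in q$ this is the condition that $t$ lie on the tangent cone description recalled in the Introduction, cutting out $d$ points of $R$ (the $l^1_a,\dots,l^d_a$ of the Introduction), while for fixed $t\in R$ it cuts out (at most) two points $a(t),a'(t)$ of $q$. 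Hence $C(R)\in|(2,d)|$ on $R\times q$, and by adjunction its arithmetic genus is $(2-1)(d-1)=d-1$, matching the claim. (Here one uses $g=d-1$, fixed earlier in the paper.)

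Next I would prove smoothness. The correspondence $C(R)$ is cut out on $R\times q$ by the incidence condition "the line $\langle t,a\rangle\subset Q$", which globally is the vanishing of a section of $\sO(2,d)$; a local computation at a point $[t_0,a_0]\in C(R)$ shows the section fails to be smooth only when the line $\langle t_0,a_0\rangle$ is tangent to $Q$ along itself in an excessive way, equivalently when $t_0$ is one of the two special vertices $z,z'$ of the cones $T_zQ,T_{z'}Q$ discussed in the dyadic-structure paragraph, or when $t_0$ lies on the conics $\Delta_H,\Delta_H'$ with excess intersection, or when the intersections with $l_x,l_y$ degenerate. All of these are precisely excluded by the transversality and disjointness clauses (a)--(f) of Generality Conditions \ref{generality conditions}: clauses (c),(d) force $\Delta_H\cap R$ and $\Delta_H'\cap R$ to be $2d$ distinct reduced points so the two branches of $C(R)$ over each are separated, clauses (e),(f) do the same over the fibres of $l_x,l_y$, and (a),(b) guarantee these two sources of ramification do not collide. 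So for $R\in|(1,d-1)|^{oo}$ the curve $C(R)$ is smooth of genus $d-1$.

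Finally, the first projection $\mathrm{pr}_R\colon C(R)\to R\simeq\mP^1$ has degree $2$ because over a general $t\in R$ there are exactly the two points $a(t),a'(t)\in q$ with $\langle t,a(t)\rangle,\langle t,a'(t)\rangle\subset Q$, as noted in the Introduction; being a degree-$2$ map from a smooth curve of genus $d-1\geq 2$ to $\mP^1$, it is the hyperelliptic $g^1_2$, so $C(R)$ is hyperelliptic. The branch points are exactly the $t\in R$ where $a(t)=a'(t)$, i.e. where the line $\langle t,a(t)\rangle$ meets $q$ with multiplicity two or is tangent; one checks these are $2d$ in number (Riemann--Hurwitz: $2(d-1)-2=2\cdot(-2)+b$ gives $b=2d$), consistent with the $2d$ Weierstrass points $w_i,w'_i$ produced via $\Delta_H,\Delta_H'$ in the construction. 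The main obstacle is the smoothness verification: one must carefully translate each potential singular configuration of $C(R)$ into a geometric degeneracy of $R$ relative to the data $(Q,q,H)$ — in particular handling the two cone vertices $z,z'$ and the Weierstrass conics $\Delta_H,\Delta_H'$ — and check that the six clauses of Generality Conditions \ref{generality conditions} (together with the standing assumptions $[H]\notin\check Q$ and $[H]\notin\mP(\mathrm{Ann}(W))$) exhaust them; the rest is routine intersection theory on $\mP^1\times\mP^1$.
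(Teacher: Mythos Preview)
Your overall architecture is correct and matches the paper: the class of $C(R)$ in $R\times q\simeq\mP^1\times\mP^1$ is $(2,d)$, so $p_a(C(R))=d-1$, and the first projection is the $g^1_2$. The difference is in how smoothness is established. You propose a direct local analysis, classifying all possible singular configurations of the defining section and checking that clauses (a)--(f) exclude them. The paper instead argues indirectly: it observes that the $2d$ points $w_i=[s_i,\alpha_i]$, $w'_i=[s'_i,\alpha'_i]$ are smooth points of $C(R)$ (this uses only the transversality in (c), (d) together with (a), (b) to keep the $s_i$ and $s'_j$ disjoint) and are simple ramification points of the degree-$2$ map $C(R)\to R$. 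Passing to the normalisation $\nu\colon\widetilde C\to C(R)$, these remain ramification points of $\widetilde C\to R$, so Riemann--Hurwitz gives $g(\widetilde C)\geq d-1$; since $g(\widetilde C)\leq p_a(C(R))=d-1$, equality holds and $\nu$ is an isomorphism. In other words, the Riemann--Hurwitz count you relegate to a consistency check at the end is in the paper the \emph{engine} of the smoothness proof.

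The paper's route is shorter and avoids any case analysis; your route is in principle feasible but, as you acknowledge, the local classification is the hard part, and your sketch of it is imprecise. For instance, the vertices $z,z'$ do not lie in $H$ (they are the poles of the two tangent hyperplanes $\Pi_z,\Pi_{z'}\supset q$), so they are never values of $t_0\in R$ and should not appear in your list of potential singular loci; what matters for ramification of $C(R)\to R$ is rather the intersection of $R$ with the Weierstrass conics $\Delta_H=\Pi_z\cap Q_H$ and $\Delta'_H=\Pi_{z'}\cap Q_H$. Also, clauses (e), (f) are not actually needed for smoothness of $C(R)$ --- they are there to control the special marked lines over $l_x,l_y$ used later --- so your attempt to match all six clauses to singular configurations will not come out cleanly. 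If you want to keep a direct smoothness argument, the clean statement is: $C(R)$ is singular over $t\in R$ exactly when $t$ lies on $\Delta_H\cup\Delta'_H$ with non-transversal intersection, and this is ruled out by (c), (d).
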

\begin{proof} First we stress that $C(R)\in |(2,d)|$ inside $\mP^1\times\mP^1=R\times q$. Moreover there exists a unique line $\langle s_i,\alpha_i\rangle\subset Q$ such that $w_i=[s_i,\alpha_i]\in C$ and the same for $\langle s'_i,\alpha'_i\rangle$, $i=1,..., d$. Hence by construction the points $w_i,w'_i\in C(R)$ are smooth points of $C(R)$ and  they are branch points for the $2$-to-$1$ morphism $\pi_{C(R)}\colon C(R)\to R$. Now let $\nu\colon \widetilde C\to C(R)$ be the normalization morphism and $\pi'\colon  {\widetilde{C}}\to R=\mP^1$ the induced morphism. By Riemann-Roch on $R\times q$ it holds that the arithmetical genus of $C(R)$ is $d-1$. On the other hand by Riemann-Hurwitz's theorem it holds that $g( {\widetilde{C}})\geq d-1$. Hence $\nu\colon \widetilde C\to C(R)$ is an isomorphism and the claim follows.
\end{proof}

\subsubsection{The notion of marked line}
Consider a general point $[t,a]\in C(R)$. By definition the line $\langle t, a\rangle$ is a line of $Q$. We stress that in general $\langle t, a\rangle\not\subset Q_H$.
\begin{defn}
The point $[t,a]$ is called {\it{the marked line}} from $t\in R$ to $a\in q$. We also call the line $l_{[t,a]}=\langle t,a \rangle$ {\it{the support of the marked line}} $[t,a]$. If no confusion arises we sometimes call $l_{[t,a]}\subset Q$ the marked line from $t\in R$ to $a\in q$.
\end{defn}

\begin{rem}\label{multisupporto} There are marked lines with the same support. Indeed denote by $l_x,l_y$ the element of $|(1,0)|$ of $Q_H$ which pass through $x$ and respectively $y$. By generality of $q$, $H$ and $R$ there exist $2d-2$ distinct points $x_1,..., x_{d-1}, y_1,..., y_{d-1}$ such that $R\cap l_x=\{x_1,..., x_{d-1}\}$ and $R\cap l_y=\{y_1,..., y_{d-1}\}$ and for the marked lines $[x_i, x]$, $[y_i, y]$ it holds that $[x_i, x]\neq [x_j, x]$, $[y_i,y]\neq [y_j,y]$, $i,j=1,...,d-1$, $i\neq j$. On the contrary for every $i,j=1,...,d-1$ it holds that $l_{[x_j, x]}=l_{[x_j,x]}=l_x$ and $l_{[y_j, y]}=l_{[y_j,y]}=l_y$. 
\end{rem}
\begin{defn}\label{datonome} $C(R)$ is called {\it{the scheme of the marked lines of $(Q,q,H,R)$}}.
\end{defn}

We can sum up the above results into the following Proposition:
\begin{prop}\label{ssuumm}
Let $Q\subset\mP^4$ be a smooth quadric threefold and let $q\subset Q$ be a smooth conic. If $H\subset\mP^4$ is a general hyperplane with respect to $(Q,q)$ and $R\subset Q\cap H$ is a curve satisfying the generality conditions \ref{generality conditions} then the scheme of the marked lines of $(Q,q,H,R)$ is a smooth hyperelliptic curve $C(R)$ of genus $d-1$.
\end{prop}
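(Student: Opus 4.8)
The statement to prove, Proposition \ref{ssuumm}, is essentially a summary: it asserts that under the stated generality hypotheses, the scheme $C(R)$ of marked lines of $(Q,q,H,R)$ is a smooth hyperelliptic curve of genus $d-1$. The plan is to collect the pieces already established in the excerpt and package them.

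First I would recall that by the construction in the Introduction, $C(R)\subset R\times q\simeq\mP^1\times\mP^1$ is the $(2,d)$-correspondence consisting of pairs $[t,a]$ with $\langle t,a\rangle\subset Q$; the identification of this correspondence with the scheme of $R$-marked lines is Definition \ref{datonome}, and the fact that the bidegree is $(2,d)$ comes from: the projection $C(R)\to R$ has degree $2$ because through a general $t\in R$ there pass exactly two lines of $Q$ meeting $q$ (the tangent cone $T_tQ\cap Q$ is a cone over a conic, so it meets $\mP(W)$ in two points of $q$), and the projection $C(R)\to q$ has degree $d$ because through a general $a\in q$ there pass $d$ lines of $Q$ meeting $R$ (again by the cone structure, and $\deg R=d$). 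Thus $C(R)\in|(2,d)|$ on the quadric surface $R\times q$.

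Next I would invoke Lemma \ref{hyperellipticity}, which states precisely that for $R\in|(1,d-1)|^{oo}$ — i.e. $R$ satisfying the generality conditions \ref{generality conditions} — the curve $C(R)$ is smooth, hyperelliptic, and of genus $d-1$. The argument there is: the points $w_i=[s_i,\alpha_i]$ and $w'_i=[s'_i,\alpha'_i]$ obtained from the two Weierstrass conics $\Delta_H,\Delta'_H$ are smooth points of $C(R)$ and are branch points of the degree-$2$ map $\pi_{C(R)}\colon C(R)\to R$; by adjunction (Riemann–Roch on $R\times q$) the arithmetic genus of a $(2,d)$-curve is $d-1$; and Riemann–Hurwitz applied to the normalization forces the geometric genus of $\widetilde C$ to be at least $d-1$, whence normalization is an isomorphism and $C(R)$ is already smooth of genus $d-1$, carrying the $g^1_2$ induced by projection to $R$. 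I should note that conditions (a)–(f) of \ref{generality conditions} are exactly what guarantee the $2d$ branch points $s_1,\dots,s_d,s'_1,\dots,s'_d$ are distinct and the intersections transversal, and that they avoid $l_x,l_y$, so no unwanted degeneration occurs.

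Finally, the word ``general'' for $H$ in the statement refers to the Generality Conditions \ref{generalityconditionsonH} (equivalently, the two conditions $[H]\notin\check Q$ and $[H]\notin\mP(\mathrm{Ann}(W))$ of the construction), which ensure $Q_H$ is a smooth quadric surface $\simeq\mP^1\times\mP^1$ with a well-defined pair of rulings $|(1,0)|,|(0,1)|$, that $q\cap H$ consists of two distinct points $\{x,y\}$, and that the line $\mP(\mathrm{Ann}(W))$ meets $\check Q$ in two distinct points so that the Weierstrass conics $\Delta_H,\Delta'_H$ exist and are smooth $(1,1)$-sections. With these in place there is nothing further to prove: the proposition is the conjunction of Lemma \ref{hyperellipticity} and Definition \ref{datonome}. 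I do not expect any genuine obstacle here — the only point requiring care is to make explicit that ``$R$ satisfying the generality conditions'' means $[R]\in|(1,d-1)|^{oo}$, so that Lemma \ref{hyperellipticity} applies verbatim, and that this open locus is nonempty (each of (a)–(f) is clearly an open dense condition on $|(1,d-1)|$, and $|(1,d-1)|^{oo}$ is a finite intersection of such).

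\begin{proof}
This is a summary of the preceding discussion. By the construction of the Introduction and Definition \ref{datonome}, the scheme of marked lines of $(Q,q,H,R)$ is the $(2,d)$-correspondence $C(R)\subset R\times q\simeq\mP^1\times\mP^1$: indeed through a general $t\in R$ there pass exactly two lines of $Q$ meeting $q$, since $T_tQ\cap Q$ is a cone over a conic with vertex $t$ and hence meets the plane $\mP(W)$ in the two points of $q\cap\langle t, \cdot\rangle$, while through a general $a\in q$ there pass $d$ lines of $Q$ meeting $R$ because $\deg R=d$; thus $C(R)\in|(2,d)|$ on the quadric surface $R\times q$. The hypothesis that $H$ is general with respect to $(Q,q)$ is that of the construction, namely $[H]\notin\check Q$ and $[H]\notin\mP(\mathrm{Ann}(W))$: the first makes $Q_H$ a smooth quadric surface with its two rulings $|(1,0)|,|(0,1)|$ and makes $q\cap H=\{x,y\}$ two distinct points, and together with $[H]\notin\mP(\mathrm{Ann}(W))$ it guarantees that $\mP(\mathrm{Ann}(W))$ meets $\check Q$ in two distinct points $[\Pi_z],[\Pi_{z'}]$, so that the Weierstrass conics $\Delta_H,\Delta'_H\subset Q_H$ are well defined and are smooth $(1,1)$-sections. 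Finally, ``$R$ satisfies the generality conditions \ref{generality conditions}'' means $[R]\in|(1,d-1)|^{oo}$; this open subset is nonempty, being a finite intersection of the dense open conditions (a)--(f). For such $R$, Lemma \ref{hyperellipticity} asserts precisely that $C(R)$ is a smooth hyperelliptic curve of genus $d-1$ (the $2d$ points $w_i=[s_i,\alpha_i]$, $w'_i=[s'_i,\alpha'_i]$ are smooth branch points of $\pi_{C(R)}\colon C(R)\to R$; adjunction on $R\times q$ gives arithmetic genus $d-1$; and Riemann--Hurwitz forces the normalization to be an isomorphism). This is exactly the assertion of the Proposition.
\end{proof}
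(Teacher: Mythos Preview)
Your proposal is correct and matches the paper's approach exactly: the paper presents this proposition explicitly as a summary (``We can sum up the above results into the following Proposition'') and gives no separate proof, the content being precisely Lemma \ref{hyperellipticity} together with Definition \ref{datonome} and the preceding construction of the $(2,d)$-correspondence. Your write-up simply makes this packaging explicit, which is entirely appropriate.
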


\subsubsection{The singular model} By construction we can associate to each marked line a line of $Q$. Let 
$$f_R\colon C(R)\to {\rm{Hilb}}^{Q}_{1}$$
 be the corresponding forgetful morphism, that is the morphism $R\times q\supset  C(R)\ni [t,a]\mapsto [l_{[t,a]}]\in {\rm{Hilb}}^{Q}_{1}=\mP^3$; see: Lemma \ref{hilblines}. We will use the following:
\begin{prop}\label{thesingularmodel} 
Let $M(R):= f_R(C(R))$. Then $M(R)$ is a curve of degree $2d$ with two singular points of multiplicity $d-1$.
\end{prop}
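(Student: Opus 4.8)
The plan is to understand the morphism $f_R\colon C(R)\to \mathrm{Hilb}^Q_1 = \mathbb{P}^3$ via the line bundle it induces, compute its degree, and then locate and analyze the two singular points of the image. First I would identify the pullback $f_R^*\mathcal{O}_{\mathbb{P}^3}(1)$. By Lemma \ref{hilblines}, a hyperplane in $\mathrm{Hilb}^Q_1$ is $\Pi_l = \{[r] : r = l \text{ or } r\cap l \neq \emptyset\}$ for a line $l\subset Q$. So I would pull back a \emph{generic} such $\Pi_l$: the divisor $f_R^*\Pi_l$ consists of those marked lines $[t,a]\in C(R)$ whose support $l_{[t,a]}$ meets a fixed general line $l\subset Q$. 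Counting these intersection points is a projection-from-$l$ computation on $Q$: projecting $Q$ from a general line realizes it as (a blow-up related to) $\mathbb{P}^2$, and one counts how many marked lines of the $(2,d)$-correspondence $C(R)\subset R\times q$ meet $l$. I expect this to give $\deg f_R^*\mathcal{O}_{\mathbb{P}^3}(1) = 2d$, consistent with the statement that $M(R)$ has degree $2d$; the bidegree bookkeeping on $R\times q\cong \mathbb{P}^1\times\mathbb{P}^1$ (where $C(R)\in|(2,d)|$) should pin down the number precisely.

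Next I would locate the singular points. The natural candidates come from Remark \ref{multisupporto}: the lines $l_x, l_y \in |(1,0)|$ of $Q_H$ through $x$ and $y$ are each the common support of $d-1$ distinct marked lines, namely $[x_1,x],\dots,[x_{d-1},x]$ all mapping to the single point $[l_x]$, and likewise $[y_1,y],\dots,[y_{d-1},y]$ all mapping to $[l_y]$. Thus $f_R$ is not injective there: it glues $d-1$ points of $C(R)$ to one point, producing a point of multiplicity $d-1$ on $M(R)$ at $[l_x]$, and another at $[l_y]$. I would verify these are genuinely the only identifications — i.e. that $f_R$ restricted to the complement of $\{[x_i,x]\}\cup\{[y_i,y]\}$ is generically injective and an immersion — by arguing that a generic line $l\subset Q$ supports only one marked line (two distinct marked lines with common support would force the support to lie in $Q_H$ and pass through $x$ or $y$, using that $R\subset Q_H$ and $a\in q$, hence $l$ meets $Q_H$ in at least the point $t\in R$ and the point $a\in q\cap Q_H$), and that the tangent directions at the $x_i$ (resp. $y_i$) are distinct, which follows from the generality conditions in \ref{generality conditions} (transversality of $R\cap l_x$, $R\cap l_y$).

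Putting these together: $f_R\colon C(R)\to M(R)$ is the normalization (since $C(R)$ is smooth of genus $d-1$ by Proposition \ref{ssuumm}), it is birational onto its image, $M(R)$ has degree $2d$ with respect to $\mathcal{O}_{\mathbb{P}^3}(1)$, and it has exactly two singular points, at $[l_x]$ and $[l_y]$, each of multiplicity $d-1$. A sanity check via the genus: a plane-curve-type count (or the fact that the arithmetic genus drop from $M(R)$ to $C(R)$ must account for the two $(d-1)$-fold points, each contributing $\binom{d-1}{2}$ to the delta invariant when the branches are smooth with distinct tangents) should be consistent with $g(C(R)) = d-1$. The main obstacle I anticipate is the degree computation — carefully counting marked lines meeting a general line of $Q$ — and, relatedly, rigorously excluding further degenerations of $f_R$ (extra nodes, non-immersion points) away from the two expected singular points; this is where the full force of the generality conditions in \ref{generalityconditionsonH} and \ref{generality conditions} will be needed.
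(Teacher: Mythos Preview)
Your proposal is correct and follows essentially the same approach as the paper: compute the degree of $f_R^*\sO_{\mP^3}(1)$ by projecting $Q$ from a general line $l$ and applying Bezout to the images $q_l$ (a conic) and $R_l$ (degree $d$) in $\mP^2$, then locate the two $(d-1)$-fold points at $[l_x],[l_y]$ via Remark~\ref{multisupporto}, and conclude there are no further singularities by the genus count. The paper pins down the exact degree by additionally computing $f^*(\Pi_{l_x})$ explicitly for the special line $l_x$, and closes the argument by noting that the projection from $[l_x]$ recovers the plane model of Lemma~\ref{lem:reconst}; your variant (arguing directly that a line supporting two marked lines must lie in $Q_H$ and pass through $x$ or $y$, plus transversality for distinct branch tangents) is a legitimate alternative for the same step.
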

\begin{proof} In this proof we set $f:=f_R$, $C:= C(R)$ and $M:= M(R)$. By Lemma \ref{hilblines} we can and we do identify ${\rm{Hilb}}^{Q}_{1}$ to $\mP^3$. We set $\sL:= f^{*}\sO_{\mP^3}(1)$. Fix a line $l\subset Q$ and set $\Pi:=\Pi_l$. By construction $f^{*}(\Pi)$ is the subscheme of marked lines whose support intersects $l$. We consider the projection from $l$:

\begin{equation}
\label{projectionline}
\xymatrix{ & {\widetilde{Q} \ar[dr]^{  \pi_{l}  }
\ar[dl]_{  {\rho_{l}}}  } \\
{Q} & & \mP^2}
\end{equation}
Since $l$ is general it is easy to see that the $\pi_l$-images $q_l, R_ l\subset\mP^2$ of the $\rho_l$-proper transform of respectively $q,R\subset Q$ are respectively a conic and a rational curve of degree $d$. Let $E_l\subset  {\widetilde{Q}}$ be the exceptional divisor, ${\widetilde{H}}=\rho_{l}^{*}(H)$ and $L\in |\pi_{l}^{*}\sO_{\mP^2}(1)|$. Clearly $L\in |{\widetilde{H}}-E_l|$. Then the fibers of $\pi_l\colon {\widetilde{Q}}\to\mP^2$ are the proper transforms of the lines $r\subset Q$ such that $r\cap l\neq\emptyset$.

 By Bezout's theorem and by generality of $l$ with respect to $R$ and $q$, or even by explicit computation, it holds that $q_l$ intersects $R_l$ in $2d$ distinct smooth points. This implies that there are $2d$ distinct points $$[t_1,a_1],...,[t_{2d},a_{2d}]\in C$$ such that $l\cap l_{[t_i,a_i]}\neq \emptyset$, $i=1,...,2d$. We have shown that $\sum_{i=1}^{2d}[t_i,a_{i}]\leq f^{*}(\Pi)$. We claim that $f^{*}(\Pi)=\sum_{i=1}^{2d}[t_i,a_{i}]$. To show this we choose a particular line of $Q$. Indeed consider the line $l_x$ and set $\Pi_x:=\Pi_{l_x}$. First note that by generality of $R\in |(1, d-1)|$ the points of $R\cap l_x=\{x_1,..., x_{d-1}\}$ are mutually distinct. Moreover let 
$m_x, n_y\in  |(0,1)|$ be the unique element of $ |(0,1)|$ such that $x\in m_x$ and respectively $y\in n_y$. Let $p_x(R)$, $p_y(R)$ be respectively the unique point of $m_x\cap R$ and $n_y\cap R$. Let $\xi_i\in q$ be the point such that $[x_i ,\xi_i]$ and $[x_i,x]$ are the two marked lines which start from $x_i$, $i=1,..., d-1$; we point out that by generality condition \ref{generality conditions} (a) it holds that $\xi_i\neq x$.
 Now note that the line $l_x\subset Q_H$. By the diagram $(\ref{projectionline})$ applied to $l_x$ it follows that $f^{*}(\Pi_x):= [p_x(R), x]+[p_y(R),y]+ \sum_{i=1}^{d-1}[x_i, x]  + \sum_{i=1}^{d-1}  [x_i ,\xi_i]$ since the strict transform of $T_{x_i}Q$ by $\rho_{l_i}\colon {\widetilde {Q}} \to Q$ are smooth surfaces, $i=1,..., d-1$. Hence by simple degree reasons  the claim follows for a general $[l]\in  {\rm{Hilb}}^{Q}_{1}$. 
 
 Now we show that $M$ has two singular points. We recall Remark \ref{multisupporto}. The $d-1$ marked lines $[x_i,x]$ have the same support on the line $l_x$. The same holds for $l_y$. This means that the image $M$ has two multiple points of degree $d-1$ which have support on the point $[l_x]$ and respectively $[l_y]$. By standard normalization theory and by Lemma \ref{hyperellipticity} it follows that $M$ is a curve of degree $2d$ with two singular points of multiplicity $d-1$ and no other singular points. Indeed if we project $\pi_{[l_x]}\colon \mP^3\dashrightarrow \mP^2$ then the composition $\pi_{[l_x]}\circ f\colon C\to \mP^2$ is exactly the morphism given in Lemma \ref{lem:reconst} $(2)$. Hence the claim follows.
\end{proof}

\subsection{Marked lines and ineffective theta characteristics}
We denote by $[t,a']\in C(R)$ the image of $[t,a]$ by the hyperelliptic involution $J_R\colon C(R)\to C(R)$. We also denote by $g^1_{d}(a)=[t,a]+[t_2,a]+...+[t_d, a]$ the divisor obtained by the marked lines which end on $a\in q$.

We want to study the pull-back $f_R^{\star}(\Pi_l)$ where $l=l_{[t,a]}$, that is we consider the marked line $[t,a]\in C(R)$, then we move to ${\rm{Hilb}}^{Q}_{1}$ via the forgetful morphism $f_R\colon C(R)\to {\rm{Hilb}}^{Q}_{1}=\mP^3$, and finally we pull-back the hyperplane section which parameterises the lines of $Q$ which touch $l$. 

By simple check there exists a subdivisor of $f_R^{*}(\Pi_l )$ of the following form: $[t,a']+[t,a]+[t_2,a]+...+[t_d, a]$. By the proof of  Proposition \ref{thesingularmodel} and by the null-correlation  $\nabla\colon {\rm{Hilb}}^{Q}_{1}=\mP^3\to \check\mP^3$ we have that there must exist other 
marked lines, not necessarily distinct, [$z_1,a_1],..., [z_{d-1}, a_{d-1}]\in C(R)$ such that $l\cap l_{[z_i, a_i]}\neq\emptyset$, and such that:
\begin{equation}\label{generaldivisor}
f_R^{*}(\Pi_l )=[z_1,a_1]+...+[z_{d-1}, a_{d-1}]+  [t,a']+[t,a]+[t_2,a]+...+[t_d, a].
\end{equation}
\begin{prop}\label{thetatheta} There exists an ineffective theta characteristic $[\theta(R)]\in {\rm{Pic}}(C(R))$ such that for the general point $[t,a]\in C(R)$, it holds that the unique effective divisor $D_{[t,a]}$ inside $|\theta(R)+[t,a]|$ is the following one: 
$$
D_{[t,a]}= [z_1,a_1]+...+[z_{d-1}, a_{d-1}].
$$
\end{prop}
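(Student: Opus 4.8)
The plan is to produce the theta characteristic by the usual device of "removing the obvious part" of the pull-back divisor in \eqref{generaldivisor} and checking that what remains is, up to a fixed line bundle, independent of the point $[t,a]$. Concretely, I would define $\theta(R)$ by the linear equivalence
\begin{equation*}
\theta(R) \sim f_R^{*}(\Pi_l) - 2[t,a] - g^1_d(a) - g^1_2 + [t,a],
\end{equation*}
or more transparently: since $f_R^{*}\sO_{\mP^3}(1) = \sL$ and $\Pi_l \in |\sO_{\mP^3}(1)|$, the divisor $f_R^{*}(\Pi_l)$ lies in the fixed linear system $|\sL|$ for every line $l\subset Q$ meeting the relevant configuration; in particular $f_R^{*}(\Pi_l)$ has fixed class, call it $\sL$. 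From \eqref{generaldivisor} we get
\begin{equation*}
D_{[t,a]} = [z_1,a_1]+\dots+[z_{d-1},a_{d-1}] \sim \sL - [t,a'] - [t,a] - [t_2,a] - \dots - [t_d,a] = \sL - [t,a'] - g^1_d(a).
\end{equation*}
Now $[t,a]+[t,a']$ is the $g^1_2$ (the fibre of $\pi_{C(R)}\colon C(R)\to R$ over $t$), so $[t,a'] \sim g^1_2 - [t,a]$; and $g^1_d(a)$ is the fibre of the other projection $C(R)\to q$ over $a$, which is the fixed linear system $g^1_d = |\sO_C(w_1+\dots+w_d)| = |\sL_W|$ in the notation of Lemma~\ref{fatt}. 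Hence $D_{[t,a]} \sim \sL - g^1_2 + [t,a] - g^1_d$, so
\begin{equation*}
\theta(R) := \sL - g^1_d - g^1_2
\end{equation*}
is a well-defined line bundle, independent of $[t,a]$, with $D_{[t,a]}$ the unique effective divisor in $|\theta(R)+[t,a]|$ — provided we check that this class is effective of the right degree, is a theta characteristic, and is ineffective.

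The degree count is immediate: $\deg \sL = 2d$ (Proposition~\ref{thesingularmodel}), $\deg g^1_d = d$, $\deg g^1_2 = 2$, so $\deg\theta(R) = d-2 = g-1$, as required for a theta characteristic in genus $g=d-1$. To see $\theta(R)^{\otimes 2}\sim \omega_{C(R)}$, I would identify $\sL$ explicitly in $\Pic(C(R))$ using the null-correlation structure: the key point (to be extracted from the proof of Proposition~\ref{thesingularmodel} and from Theorem~\ref{tango}) is that $\sL$ restricted to $C(R)$ is, up to the two rulings $g^1_2$ and $g^1_d$, twice a canonical-type class; more precisely I would show $2\sL \sim \omega_{C(R)} + 2g^1_d + 2g^1_2$ by a divisor computation on $C(R)\subset R\times q \cong \mP^1\times\mP^1$ together with the fact that $f_R$ is the map to $\mathrm{Hilb}^Q_1$ whose hyperplane class pulls back to the $(1,1)$-class under the Segre-type embedding — equivalently, using that $C(R)\in|(2,d)|$ and adjunction on $\mP^1\times\mP^1$ gives $\omega_{C(R)} = \sO_{C(R)}(0,d-2)$, while $\sL = \sO_{C(R)}(1,d-1)$ pulled back appropriately. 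Squaring the definition of $\theta(R)$ and substituting then yields $2\theta(R) \sim 2\sL - 2g^1_d - 2g^1_2 \sim \omega_{C(R)}$.

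For ineffectivity I would invoke Proposition~\ref{formoftheta}: it suffices to exhibit $\theta(R)$ in the form $w_1+\dots+w_d - g^1_2$ for one of the two blocks of a $(d)$-partition of $W(C(R))$ — and the Weierstrass points $w_i = [s_i,\alpha_i]$, $w'_i = [s'_i,\alpha'_i]$ coming from the two Weierstrass conics $\Delta_H,\Delta'_H$ provide exactly such a partition by construction (Lemma~\ref{hyperellipticity} and the preceding paragraph). So I would check $\theta(R) \sim \sum_{i=1}^d w_i - g^1_2$, equivalently $\sum w_i \sim \sL - g^1_d$, by computing $f_R^{*}(\Pi_{l})$ for a well-chosen line $l$ — e.g. a line through the vertex $z$ of the tangent cone $T_zQ$, whose pull-back visibly contains $\sum w_i$ plus a fibre $g^1_d(a)$ of the other projection — mirroring the trick already used (with $l=l_x$) in the proof of Proposition~\ref{thesingularmodel}.

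The main obstacle I anticipate is the exact bookkeeping of $\sL$ in $\Pic(C(R))$: one must pin down $f_R^{*}\sO_{\mP^3}(1)|_{C(R)}$ precisely enough to get the theta-characteristic identity on the nose (not just up to $2$-torsion), and this requires carefully disentangling the two rulings and the contribution of the two singular points of $M(R)$ — i.e. understanding $f_R$ as the normalization of $M(R)$ and accounting for the conductor. Once $\sL \sim \omega + 2g^1_d + 2g^1_2 - (\text{correction})$ is established with the correction term zero (which the degree count $2d = (2d-2)+2d+4-?$ forces, giving correction of degree $d+2$... so in fact $\sL$ is \emph{not} simply that and the honest computation on $\mP^1\times\mP^1$ via $f_R = f_{\theta,m,n}\circ(\text{iso})$ of Lemma~\ref{quadica}'s analogue is the cleanest route), the rest is the routine linear-equivalence manipulation sketched above. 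I would therefore organize the write-up as: (i) compute $\sL|_{C(R)}$ via the $\mP^1\times\mP^1$ model; (ii) define $\theta(R)$ and check it is a theta characteristic by squaring; (iii) check ineffectivity via the Weierstrass partition and Proposition~\ref{formoftheta}; (iv) read off $D_{[t,a]}$ from \eqref{generaldivisor} and uniqueness from $h^0(\theta(R)+[t,a])=1$, which holds because $\theta(R)$ ineffective forces $h^0(\theta(R)+P)\le 1$ for all $P$.
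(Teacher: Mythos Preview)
Your proposal is essentially correct and converges on the same computation as the paper, but it is organized with a redundant detour. The paper's proof defines $\theta' := \sum_{i=1}^d w_i - g^1_2$ (an ineffective theta characteristic by Proposition~\ref{formoftheta}), then computes $f_R^{*}(\Pi_{l_i})$ for a Weierstrass supporting line $l_i=\langle z,s_i\rangle$ directly: the marked lines whose supports meet $l_i$ are exactly the $d$ marked lines ending at $\alpha_i\in q$ together with the $d$ Weierstrass marked lines $w_1,\dots,w_d$ (all of which pass through $z\in l_i$), giving $f_R^{*}(\Pi_{l_i})\sim \pi_2^{-1}(\alpha_i)+g^1_2+\theta'$. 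Comparing this with your equation \eqref{generaldivisor} for a general $l$, and using $\pi_2^{-1}(a)\sim\pi_2^{-1}(\alpha_i)$, one gets $D_{[t,a]}-[t,a]\sim\theta'$ immediately. This single comparison replaces your steps (i)--(iii) at once: the theta-characteristic property and ineffectivity come for free from $\theta'$, so there is no need to verify $2\theta(R)\sim\omega_{C(R)}$ separately.

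Your step (ii) is where you get tangled, and for a structural reason: $\sL=f_R^{*}\sO_{\mP^3}(1)$ is \emph{not} the restriction of any $\sO(a,b)$ from $R\times q$ (indeed $\sL\sim\theta'+g^1_2+g^1_d$ and $\theta'$ is not such a restriction), so the attempt to identify $\sL$ as $\sO_{C(R)}(1,d-1)$ or to read off $2\sL\sim\omega+2g^1_d+2g^1_2$ from adjunction on $\mP^1\times\mP^1$ alone cannot succeed without already knowing the answer. You correctly sense this and fall back to your step (iii), which is exactly the paper's computation with the line through $z$. So the honest route you identify at the end \emph{is} the paper's route; just drop the attempt at (ii) and run (iii) first, which simultaneously pins down $\sL$ and proves everything.
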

\begin{proof} We define $D_{[t,a]}:= [z_1,a_1]+...+[z_{d-1}, a_{d-1}]$; the claim is equivalent to show that $\theta\sim D_{[t,a]} -[t,a]$ is an ineffective theta characteristic.  We consider the set of Weierstrass points as obtained in Lemma \ref{hyperellipticity}; 
$w_i:=[s_i,\alpha_i]$, $w'_i:=[s'_i,\alpha'_i]$, $i=1,...,d$. We consider the embedding $\zeta\colon {\rm{Hilb}}^{Q}_{1}=\mP^3\to \mG (2,5)\subset\mP^9$.
 It is easy to show that the images $\zeta([l_i])$ of the supporting  lines $l_i$, of the points $[s_i,\alpha_i]=w_i\in C(R)$, $i=1,...,d$ belong to a conic. Hence by Proposition \ref{tango} all the $[l_i]$ belong to a line $L$ of $\mP^3$. The same holds for the supporting  lines $l'_i$, of $[s'_i,\alpha'_i]$, $i=1,...,d$, and we call $L'$ the corresponding line of $\mP^3$.

 By Lemma \ref{formoftheta} $\theta'\sim \sum_{i=1}^{d}[s_i,\alpha_i]-g^1_2$ is an ineffective theta characteristic. We claim $\theta\sim \theta'$. Indeed we select $l_i=l_{[s_i,\alpha_i]}$ and by the proof of Proposition \ref{thesingularmodel} we have
$f^{*}(\Pi_{l_i})=[s_i,\alpha_i]+[t_1,\alpha_i]+...+[t_{d-1},\alpha_i]+ \sum_{j=1}^{d}[s_j,\alpha_j]$; see also the identity (\ref{generaldivisor}). In other words $f^{*}(\Pi_{l_i})\sim \pi_{2}^{-1}(\alpha_i)+   g^1_2+\theta'$ where $\pi_2\colon C\to q$ is the degree $d$-morphism induced by the projection $R\times q\to q$. 
We turn to equation (\ref{generaldivisor}).
By definition $[t,a]+[t_2,a]+...+[t_d, a]=\pi_2^{-1}(a)$ then
$f^{*}(\Pi_{l})=D_{[t,a]} +[t,a']+\pi_2^{-1}(a)\sim \theta +g^1_2+\pi_2^{-1}(a)\sim f^{*}(\Pi_{l_i})\sim \pi_{2}^{-1}(\alpha_i)+   g^1_2+\theta'$. Hence the claim follows if we put $\theta(R):=\theta$.
\end{proof}
\begin{rem}\label{remarco}
We have seen in Remark \ref{multisupporto}; that is the $d-1$ marked lines $[x_i,x]$ have the same support on $l_x$ and the $d-1$ marked lines $[y_i,y]$ have the same support on $l_y$, $i=1,...,d-1$; see also the proof of Proposition \ref{thesingularmodel}. We point out the reader that the above construction show us four special marked lines:
\begin{enumerate}[{1)}]
\item $n_y(R):=[p_y(R),y]$;
\item $n'_y(R):=[p_y(R),y']$;
\item $m_x(R):=[p_x(R),x]$
\item$m'_x(R):=[p_x(R),x']$
\end{enumerate}
where $\{y,y'\}$ and $\{x,x'\}$ are respectively the intersection of $q$ with $T_{p_{y}(R)}Q$ and $T_{p_{x}(R)}Q$.
\end{rem}

Now by our interpretation of the thetacharacteristic given in  Proposition \ref{thetatheta} we have:
\begin{cor}\label{immaginichiave} Using above notation it holds:
\begin{enumerate}[{1)}]
\item $\sum_{i=1}^{d-1}[x_i,x]\in |\theta(R)+n_y(R)|$;
\item $\sum_{i=1}^{d-1}[y_i,y]\in |\theta(R)+m_x(R)|.$
\end{enumerate}
Moreover by the forgetful morphism $f_R\colon C(R)\to {\rm{Hilb}}^{Q}_{1}$ it holds that:
 $$
 f_R([x_1, x])=f_R([x_2, x])=...=f_R([x_{d-1}, x])=[l_x],
 $$
$$
 f_R([y_1, y])=f_R([y_2, y])=...=f_R([y_{d-1}, y])=[l_y].
 $$
\end{cor}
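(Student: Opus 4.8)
The second half of the Corollary is immediate: the support of the marked line $[x_i,x]$ is the line $\langle x_i,x\rangle$, and this equals $l_x$ since both $x_i$ and $x$ lie on $l_x$ (Remark \ref{multisupporto}); hence $f_R([x_i,x])=[l_x]$ for every $i$ by the very definition of the forgetful morphism $f_R\colon C(R)\to{\rm{Hilb}}^{Q}_{1}=\mP^3$, and symmetrically $f_R([y_i,y])=[l_y]$. The first half is where the work lies, and the plan is to reduce both equivalences to the single identity
\begin{equation}\label{clefplan}
g^1_d\sim \theta(R)+m_x(R)+n_y(R)\quad\text{in }\Pic(C(R)),
\end{equation}
where $g^1_d$ denotes the class of a fibre of the degree-$d$ morphism $\pi_2\colon C(R)\to q$ induced by $R\times q\to q$. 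Granting (\ref{clefplan}), the two assertions follow by combining it with the divisor identities $\pi_2^{-1}(x)=m_x(R)+\sum_{i=1}^{d-1}[x_i,x]$ and $\pi_2^{-1}(y)=n_y(R)+\sum_{i=1}^{d-1}[y_i,y]$, which give $\sum_{i=1}^{d-1}[x_i,x]\sim g^1_d-m_x(R)\sim\theta(R)+n_y(R)$ and $\sum_{i=1}^{d-1}[y_i,y]\sim g^1_d-n_y(R)\sim\theta(R)+m_x(R)$. These two divisor identities hold because the marked lines ending at $x$ (resp. at $y$) are exactly the ones appearing in the decomposition of $f_R^*(\Pi_{l_x})$ (resp. $f_R^*(\Pi_{l_y})$) computed in the proof of Proposition \ref{thesingularmodel}, and by generality condition \ref{generality conditions} (a) the points $\xi_i$ there satisfy $\xi_i\neq x$, so they contribute nothing to the fibre over $x$.

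To prove (\ref{clefplan}) I would set $\sL:=f_R^*\sO_{\mP^3}(1)$ and write down two expressions for its class in $\Pic(C(R))$. First, take linear equivalence classes in the identity (\ref{generaldivisor}) underlying Proposition \ref{thetatheta}: for a general marked line $[t,a]$ one has $D_{[t,a]}\sim\theta(R)+[t,a]$ (it is the unique effective divisor of $|\theta(R)+[t,a]|$), $[t,a']\sim g^1_2-[t,a]$ (as $[t,a']=J_R([t,a])$), and $[t,a]+[t_2,a]+\cdots+[t_d,a]=\pi_2^{-1}(a)\sim g^1_d$; summing and cancelling $[t,a]$ gives $\sL\sim\theta(R)+g^1_2+g^1_d$, an identity in $\Pic(C(R))$ that no longer mentions $[t,a]$. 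Second, the divisor computation $f_R^*(\Pi_{l_x})=m_x(R)+n_y(R)+\sum_{i=1}^{d-1}[x_i,x]+\sum_{i=1}^{d-1}[x_i,\xi_i]$ from the proof of Proposition \ref{thesingularmodel}, together with $[x_i,x]+[x_i,\xi_i]\sim g^1_2$ (the two marked lines issued from $x_i$ are hyperelliptic conjugates), yields $\sL\sim m_x(R)+n_y(R)+(d-1)\,g^1_2$. Comparing the two expressions and cancelling one $g^1_2$ gives $\theta(R)+g^1_d\sim m_x(R)+n_y(R)+(d-2)\,g^1_2$; finally, since $C(R)$ is hyperelliptic of genus $d-1$ (Lemma \ref{hyperellipticity}) we have $(d-2)\,g^1_2\sim K_{C(R)}\sim 2\theta(R)$, and substituting this into the right-hand side turns the last relation into precisely (\ref{clefplan}).

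The point that needs the most care is the applicability of (\ref{generaldivisor}): the marked lines $m_x(R)$ and $n_y(R)$ are highly non-generic — their supports are the ruling lines $m_x,n_y\subset Q_H$ rather than general lines of $Q$ — so one cannot hope to compute $f_R^*(\Pi_{m_x})$ or $f_R^*(\Pi_{n_y})$ directly from the generic divisor picture of Proposition \ref{thetatheta}. The argument above is designed to sidestep this entirely: only the \emph{class} $\sL\sim\theta(R)+g^1_2+g^1_d$ is used, and being a relation in $\Pic(C(R))$ independent of the chosen marked line it is valid with no genericity hypothesis. The remaining verifications — reducedness of the fibres $\pi_2^{-1}(x),\pi_2^{-1}(y)$ and the distinctness of the relevant points — are already in force from the generality conditions of \ref{generality conditions} used in the proof of Proposition \ref{thesingularmodel}.
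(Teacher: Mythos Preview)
Your argument is correct. The paper's own proof is the single line ``The claim follows by Proposition~\ref{thetatheta},'' whose most natural reading is to specialize the description of $D_{[t,a]}$ given there directly to the marked line $n_y(R)$ (respectively $m_x(R)$): one computes geometrically which supporting lines meet $n_y$ and finds, after removing $n_y(R)$, its conjugate $n'_y(R)$, and the fibre $\pi_2^{-1}(y)$, exactly the $d-1$ marked lines $[x_i,x]$ (their common support $l_x$ meets $n_y$ because the two belong to opposite rulings of $Q_H$, while $m_x$ and $n_y$ lie in the same ruling and so are disjoint).

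You take a genuinely different route: rather than specializing the geometric description of $D_{[t,a]}$ to a non-generic marked line, you extract from the proofs of Propositions~\ref{thesingularmodel} and~\ref{thetatheta} two formulas for the class $\sL=f_R^*\sO_{\mP^3}(1)$ in $\Pic(C(R))$, compare them, and invoke $2\theta(R)\sim K_{C(R)}\sim(d-2)g^1_2$ to obtain the clean identity $g^1_d\sim\theta(R)+m_x(R)+n_y(R)$, from which both assertions drop out. This is slightly longer but has a real advantage: it avoids having to argue that the divisor decomposition~(\ref{generaldivisor}) persists (with the right multiplicities) at the special marked lines $m_x(R),n_y(R)$, a point the paper leaves implicit. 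The only place your argument still touches a special fibre is the identity $\pi_2^{-1}(x)=m_x(R)+\sum[x_i,x]$, but that follows from $R\cap T_xQ_H=R\cap(l_x\cup m_x)$ together with condition~(e) and $R\cdot m_x=1$, so no further work is needed.
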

\begin{proof} The claim follows by Proposition \ref{thetatheta}. 
\end{proof}

To ease reading it is useful to sum up the results of this section.

\begin{figure}
\centering
\includegraphics[scale=0.7]{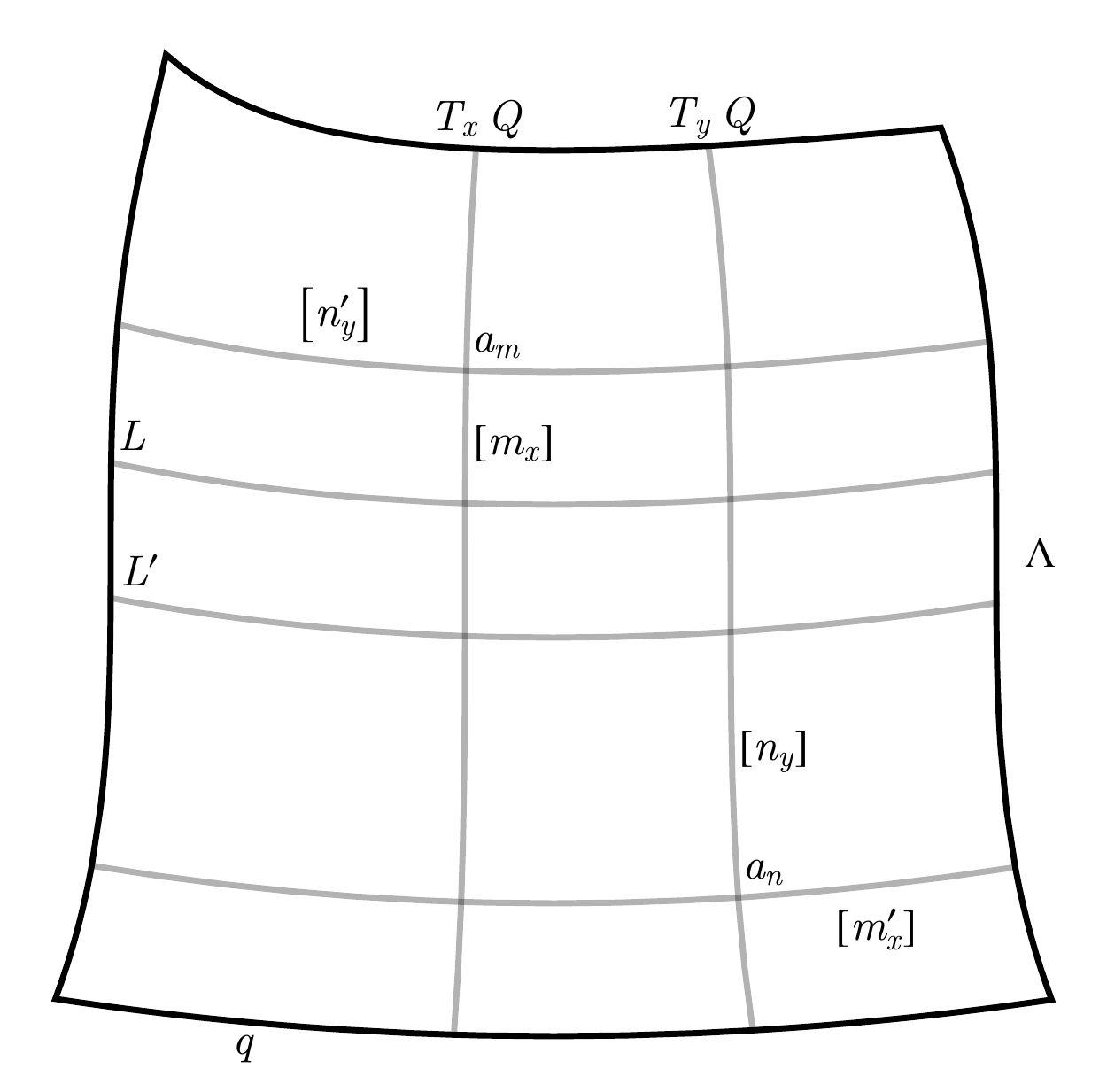}
\caption{The surface $S_q$}
\label{etichettaPerRiferimenti}
\end{figure}

\begin{thm}\label{useusefulful} The scheme of marked lines of the $4$-ple $(Q,q,H, R)$ is an hyperelliptic curve $C(R)$ which comes equipped with an ineffective theta characteristic $\theta(R)$ given by the incidence relation of lines of $Q$. 
The image $M(R)$ of the forgetful morphism $f_R\colon C(R)\to {\rm{Hilb}}^{Q}_{1}$ is contained inside the smooth quadric $S_{q}$ which parameterises the lines of $Q$ which intersect $q$. The two points $[l_x],[l_y]\in {\rm{Hilb}}^{Q}_{1}$ are the support of respectively 
two points of $M(R)$ each of them of multiplicity $d-1$. The rational map $h_R\colon C(R)\to \mP^2$ given by the forgetful morphism followed by the projection $\mP^3\setminus \{[l_x]\}\to \mP^2$ is the morphism $\phi_{|\theta(R)+g^1_2+m_{x}(R)|}\colon C(R)\to\mP^2$ described in Lemma \ref{lem:reconst}.
\end{thm}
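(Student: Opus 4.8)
The plan is to read off every assertion from the material already developed, supplying the few remaining checks. Hyperellipticity of $C(R)$ and the equality $g(C(R))=d-1$ are Proposition \ref{ssuumm} (equivalently Lemma \ref{hyperellipticity}); the ineffective theta characteristic $\theta(R)$ ``given by the incidence relation of lines of $Q$'' is the one produced in Proposition \ref{thetatheta}; and the fact that $M(R)=f_R(C(R))\subset{\rm{Hilb}}^{Q}_{1}$ is a curve of degree $2d$ whose only singularities are two points of multiplicity $d-1$ is Proposition \ref{thesingularmodel}. So only three things are genuinely new: (i) that $M(R)$ lies on the smooth quadric $S_q$; (ii) that its two multiple points are supported at $[l_x]$ and $[l_y]$; and (iii) that $h_R$ coincides with $\phi_{|\theta(R)+g^1_2+m_x(R)|}$.

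For (i) I would argue as follows. By definition the support $l_{[t,a]}=\langle t,a\rangle$ of a marked line passes through $a\in q\subset\mP(W)$, hence meets the plane $\mP(W)$; conversely any line of $Q$ that meets $\mP(W)$ meets it inside $\mP(W)\cap Q=q$. Thus $M(R)$ is contained in $S_q=\{[l]\in{\rm{Hilb}}^{Q}_{1}\mid l\cap q\neq\emptyset\}=\sigma_1\cap{\rm{Hilb}}^{Q}_{1}$, where $\sigma_1\subset\mG(2,5)$ denotes the Schubert divisor of lines meeting $\mP(W)$. Since $\sigma_1$ is a Pl\"ucker hyperplane section of $\mG(2,5)\subset\mP^9$ and, by Theorem \ref{tango}, the inclusion ${\rm{Hilb}}^{Q}_{1}=\mP^3\hookrightarrow\mG(2,5)$ is the $2$-Veronese embedding, $S_q$ is cut out on $\mP^3$ by a quadric, which one checks to be smooth. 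In particular $M(R)$, being irreducible of degree $2d>2$, cannot lie in a plane, hence spans $\mP^3$; this will be used in (iii).

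For (ii) and (iii): by Remark \ref{multisupporto} the $d-1$ distinct marked lines $[x_1,x],\dots,[x_{d-1},x]$ all have support $l_x$, so $f_R$ maps them all to $[l_x]$, and symmetrically the $[y_i,y]$ are sent to $[l_y]$ --- this is Corollary \ref{immaginichiave} --- and since Proposition \ref{thesingularmodel} guarantees that $M(R)$ has no further singularities, its two points of multiplicity $d-1$ are exactly $[l_x]$ and $[l_y]$. Next, $h_R=\pi_{[l_x]}\circ f_R$ with $\pi_{[l_x]}\colon\mP^3\dashrightarrow\mP^2$ the projection from $[l_x]$; on the smooth curve $C(R)$ it is a morphism, defined by the $2$-dimensional subsystem of $|\sL|$, where $\sL:=f_R^*\sO_{\mP^3}(1)$ has degree $2d$, cut by the planes through $[l_x]$, with its fixed part $\sum_{i=1}^{d-1}[x_i,x]$ removed. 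Starting from the identity $f_R^*(\Pi_{l_x})=m_x(R)+n_y(R)+\sum_{i=1}^{d-1}[x_i,x]+\sum_{i=1}^{d-1}[x_i,\xi_i]$ established in the proof of Proposition \ref{thesingularmodel}, and combining Corollary \ref{immaginichiave}(1), the $J_R$-invariance of $\theta(R)$ (Remark \ref{polyhedra}) and the relation $n_y(R)+n'_y(R)\sim g^1_2$ (see Remark \ref{remarco}), one gets $\sum_{i=1}^{d-1}[x_i,\xi_i]\sim\theta(R)+g^1_2-n_y(R)$, so that the moving part of the system defining $h_R$ is linearly equivalent to $\theta(R)+g^1_2+m_x(R)$. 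As $|\theta(R)+g^1_2+m_x(R)|$ is a $\mP^2$ by Lemma \ref{lem:reconst}(1), while the system of planes through $[l_x]$ already has projective dimension $2$ and stays non-degenerate in $\mP^2$ (because $M(R)$ spans $\mP^3$), the two systems must coincide; hence $h_R=\phi_{|\theta(R)+g^1_2+m_x(R)|}$, with marked point $m_x(R)$, in accordance with Corollary \ref{immaginichiave}(2), which exhibits $\sum_{i=1}^{d-1}[y_i,y]$ as the theta polyhedron of $m_x(R)$.

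I expect the only step requiring genuine care to be (iii): pinning down the exact divisor class that cuts out $h_R$ on $C(R)$, and then using the dimension count to conclude that the projected linear system is the \emph{complete} one --- everything else is bookkeeping or a restatement of earlier results. A secondary subtlety is verifying that $S_q$ is a \emph{smooth} quadric rather than a quadric cone or a plane, which is precisely the point where the generality of $(Q,q,H)$ intervenes.
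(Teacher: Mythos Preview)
Your argument is correct and follows the same overall plan as the paper: invoke Proposition~\ref{ssuumm}, Proposition~\ref{thesingularmodel}, and Proposition~\ref{thetatheta} for the bulk of the statement, then supply the two remaining checks about $S_q$ and about $h_R$. There are two small differences worth flagging. For (i), the paper does not go through Tango's theorem and the Schubert hyperplane $\sigma_1$; instead it observes directly that the locus of lines of $Q$ meeting $q$ is a smooth $\mP^1$-bundle over $q$ (the pencil of lines of $Q$ through each point of $q$) and has degree~$2$ in ${\rm{Hilb}}^{Q}_{1}=\mP^3$ because two general lines of $Q$ span a hyperplane meeting $q$ in two points. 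This is quicker and makes the smoothness immediate; note also that $S_q$ depends only on $(Q,q)$, not on $H$, so your remark that ``the generality of $(Q,q,H)$ intervenes'' here is slightly off --- only the smoothness of $q$ is used. For (iii), the paper simply asserts the identification (it is already stated without proof at the end of the proof of Proposition~\ref{thesingularmodel}), whereas you actually carry out the divisor computation $\sL-\sum_{i}[x_i,x]\sim\theta(R)+g^1_2+m_x(R)$ and the dimension comparison. Your version is more complete on this point.
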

\begin{proof} It follows by Proposition \ref{ssuumm}, by Proposition \ref{thesingularmodel} and by Proposition \ref{thetatheta} and its proof. Finally notice that if we take two general line $l,r\subset Q$ then the hyperplane section spanned inside $\mP^4$ by $l$ and $r$ intersects $q$ in two points. Hence the last claim is trivial since the subscheme of ${\rm{Hilb}}^Q_1$ given by lines intersecting $q$ is a smooth $\mP^1$ bundle over $q$ of degree $2$ inside ${\rm{Hilb}}^Q_1=\mP^3$. 
\end{proof}

\subsection{Smooth $2$-dimensional quadrics associated to the scheme of marked lines}
Actually we need to build an identification between the quadric $S_q$ described in Theorem \ref{useusefulful}  and the quadric $S_{C,\theta,m,n}$ of Definition \ref{eccola}, naturally associated to the point $[C,\theta, m,n]\in \sS^{+,{\rm{hyp}}}_{d-1,2}$. We do it first in the case where 
$[C,\theta, m,n]=[C(R),\theta(R), m_x(R),n_y(R)]$.

In Theorem \ref{useusefulful} we have considered the $4$-ple $(Q,q,H, R)$ and  the associated scheme of marked lines $C(R)$. We have set $H\cap q=\{x,y\}$ where $\{x\}=m_x\cap l_x$, $\{y\}=n_y\cap l_y$. We have chosen $R\in |1,d-1|$ and we have set $m_x\cap R=\{p_x(R)\}$ and $n_y\cap R=\{p_y(R)\}$ obtaining two special marked lines: $m(R):=[p_x(R),x]$, $n(R):=[p_y(R),y]$; see also Remark \ref{remarco}.

By Theorem \ref{useusefulful} we can define a rational map $$\nu\colon |(1,d-1)|\dashrightarrow \sS^{+,{\rm{hyp}}}_{d-1,2}$$ given by $$\nu\colon [R]\mapsto \nu([R])=[C(R),\theta(R), m(R),n(R)]\in \sS^{+,{\rm{hyp}}}_{d-1,2}.$$ 
\noindent
By Definition \ref{eccola} 
$$Q_{C(R),m(R),n(R)}:= |\sL_{\theta(R) +m(R)+n(R)}|^{\vee}\times|\sL_W(R)|^{\vee}$$ is the abstract surface naturally associated to the point $\nu([R]) \in \sS^{+,{\rm{hyp}}}_{d-1,2}$ according to its Definition \ref{eccola}. We also denote by  $S_{C(R),m(R),n(R)}\subset\mP^3$ its projective image.
To relate the quadric $S_q\subset  {\rm{Hilb}}^{Q}_{1}$ to the quadric $S_{C(R),m(R),n(R)}$ we first identify $Q_{C(R),m(R),n(R)}$ to an abstract model of $S_q$.
\subsubsection{Abstract definition of $S_q$}
We consider the pencil $\mP({\rm{Ann}}(W))\subset \mP(\check V)$. Let $\rho_{\Lambda}\colon\Lambda\to \mP({\rm{Ann}}(W))$ be the $2$-to-$1$ cover induced by the  $2$-to-$1$ cover of $\mP(\check V)$ branched over the dual quadric $Q^\vee$. This means that a point $([\Pi],\star)\in\Lambda$ is the datum of $[\Pi]\in \mP(\check V)$ and the class of a ruling of the hyperplane section $\Pi\cap Q=Q_{\Pi}$. Let $j_{\Lambda}\colon \Lambda\to \Lambda$ be the involution associated to $\rho_{\Lambda}\colon\Lambda\to \mP({\rm{Ann}}(W))$. Obviously $j_{\Lambda}([\Pi],\star)=([\Pi],-\star)$ where $-\star$ is the class of the other ruling on $Q_{\Lambda}$; it deserves a definition:
\begin{defn}\label{exchanger} The automorphism $j_{\Lambda}\colon \Lambda\to \Lambda$ is called {\it{exchanger of rulings}}.
\end{defn}
Our picture comes with two special points $z,z'\in Q$. Their projective tangent spaces $\Pi_z$, $\Pi_{z'}$ obviously give two points $[\Pi_z] , [\Pi_{z'}]\in \mP({\rm{Ann}}(W))$. By constriction $[\Pi_z]$ and $[\Pi_{z'}]$ are the two branched points of $\rho_{\Lambda}\colon\Lambda\to \mP({\rm{Ann}}(W))$. In the sequel we will consider $[\Pi_z] , [\Pi_{z'}]\in \Lambda$ since for these two points there is no ambiguity regarding the class of rulings to be considered.

\begin{lem}\label{struttura quadricaq} Let $Q_q:=q\times \Lambda$. It holds that the embedding of $Q_q$ as a projective quadrics is $S_q$. Moreover for the two natural fibrations $$\pi_{q}\colon Q_q\to q,$$ $$\pi_{\Lambda}\colon Q_q\to \Lambda$$ it holds:
\begin{enumerate}[{1)}]
\item for every $ a\in q$ the fiber $\pi_{q}^{-1}(a)=\sR_{H_a}$ parameterises the lines of the ruling of the tangent hyperplane section to $Q$ at $a\in q\subset Q$;
\item for every $([\Pi],\star)\in\Lambda$, $\pi_{\Lambda}^{-1} ([\Pi],\star)$ parameterises the lines of the ruling $\star$ (which, obviously, intersect $q$ since $q\subset \Pi\cap Q$).
\end{enumerate}
\end{lem}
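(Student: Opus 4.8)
The plan is to identify the abstract product $Q_q := q \times \Lambda$ with the subscheme $S_q \subset \mathrm{Hilb}^Q_1 = \mathbb{P}^3$ parameterising lines of $Q$ meeting the conic $q$, and to check that the two projections realise the two rulings as described. First I would recall from Theorem \ref{useusefulful} and its proof that $S_q$ is indeed a smooth quadric surface in $\mathrm{Hilb}^Q_1 = \mathbb{P}^3$: it is the image of $q$ under the $\mathbb{P}^1$-bundle structure whose fibre over $a \in q$ is the pencil $\sR_{H_a}$ of lines in the tangent cone $T_a Q \cap Q$ through $a$. This gives the fibration $\pi_q \colon S_q \to q$ together with assertion 1), essentially by the definition of the tangent hyperplane section as a cone over a smooth conic with vertex $a$.

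Next I would construct the second fibration. A line $l \subset Q$ meeting $q$ lies in a unique hyperplane $\langle l, q\rangle$ (when $l \not\subset \mathbb{P}(W)$), hence determines a point $[\Pi] \in \mathbb{P}(\mathrm{Ann}(W))$ together with the ruling class $\star$ of the quadric surface $Q_\Pi = \Pi \cap Q$ to which $l$ belongs; this produces a morphism $S_q \to \Lambda$ to the double cover $\rho_\Lambda \colon \Lambda \to \mathbb{P}(\mathrm{Ann}(W))$. Conversely, a point $([\Pi], \star) \in \Lambda$ with $[\Pi] \neq [\Pi_z], [\Pi_{z'}]$ gives a smooth quadric surface $Q_\Pi$ containing $q$ as a $(1,1)$-curve, and the lines of the ruling $\star$ sweep out a $\mathbb{P}^1$ of lines each meeting $q$; at the two branch points $[\Pi_z], [\Pi_{z'}]$ the hyperplane section is the cone $T_z Q \cap Q$, resp. $T_{z'} Q \cap Q$, over $q$, and its unique ruling of lines through the vertex again meets $q$ — so no ambiguity of ruling arises there, matching the convention after Definition \ref{exchanger}. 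This gives $\pi_\Lambda \colon S_q \to \Lambda$ and assertion 2). I would then verify that $(\pi_q, \pi_\Lambda) \colon S_q \to q \times \Lambda$ is an isomorphism: it is a morphism between smooth projective surfaces, and a line $l$ meeting $q$ is recovered from the pair $(a, ([\Pi],\star))$ with $a = l \cap q$ as the unique line of the ruling $\star$ on $Q_\Pi$ through $a$, so the map is bijective, hence an isomorphism; since $\mathrm{Pic}(q \times \Lambda) = \mathbb{Z}^2$ with the two rulings, and $S_q$ is a smooth quadric in $\mathbb{P}^3$, the embedding $S_q \hookrightarrow \mathrm{Hilb}^Q_1$ is exactly the Segre embedding, i.e. $Q_q \cong S_q$ as projective quadrics.

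The main obstacle I anticipate is the bookkeeping at the two special hyperplanes $[\Pi_z], [\Pi_{z'}]$, where the hyperplane section degenerates to a quadric cone: one must argue that the fibre of $\pi_\Lambda$ over each such point is still a single $\mathbb{P}^1$ of lines (the lines through the vertex, all of which meet $q$ since $q \subset \Pi_z \cap Q$) and that no second "phantom" ruling appears, so that $\Lambda$ — and not some partial normalisation of $\mathbb{P}(\mathrm{Ann}(W))$ — is the correct base. This is exactly why the double cover $\rho_\Lambda$ is branched precisely at $[\Pi_z], [\Pi_{z'}]$, and the generality of $H$ (condition $[H] \notin \check Q$, $[H] \notin \mathbb{P}(\mathrm{Ann}(W))$) together with the already-established smoothness of the Weierstrass conics $\Delta_H, \Delta_H'$ ensures the cone vertices $z, z'$ are not on $H$, so the picture is as claimed. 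Everything else is a routine dimension count and the standard fact that two smooth quadric surfaces in $\mathbb{P}^3$ sharing the same $\mathrm{Pic}$-generators are projectively the Segre variety.
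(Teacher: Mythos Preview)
Your proposal is correct and follows essentially the same route as the paper's (very terse) proof: send a line $l\subset Q$ meeting $q$ to the pair $(l\cap q,\,[\langle q,l\rangle,\star])\in q\times\Lambda$, and observe that this is a bijection between $S_q$ and $Q_q$ whose inverse recovers $l$ as the unique line of the chosen ruling through the chosen point of $q$. Your final paragraph about the generality of $H$ and the Weierstrass conics $\Delta_H,\Delta_H'$ is extraneous here, since the statement involves only the pair $(Q,q)$ and the double cover $\Lambda\to\mP({\rm Ann}(W))$, not the hyperplane $H$; the well-definedness of $\pi_\Lambda$ over the branch points $[\Pi_z],[\Pi_{z'}]$ follows directly from the fact that a quadric cone has a single ruling, which you already noted.
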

\begin{proof} Fix $u\in q$ and let $l\subset Q$ be a line such that $u\in l$. If $T_uQ$ is the tangent hyperplane section to $Q$ at $u$ we see that $l$ gives the point $[l]\in S_q$. Let $\Pi:=\langle q,l\rangle$ be the hyperplane generated by $l$ and $q$. Obviously we have also 
marked  the ruling $\star$ of $Q_{\Pi}:=\Pi\cap Q$ given by $l$. Then $\pi_{\Lambda}([l])=[\Pi,\star]$. The rest is easy by the definition of $S_q$ and of ${\rm{Hilb}}^{Q}_{1}$.
\end{proof}
\subsubsection{Identification of quadrics}
To short we denote $$Q_R:=|\sL_{\theta(R) +m(R)+n(R)}|^{\vee}\times |\sL_W(R)|^{\vee},\, S_R:=S_{C(R),\theta(R),m(R),n(R)}.$$
We are going to identify $Q_R$ to $Q_q$. By Proposition \ref{thesingularmodel} we have a morphism $f_R\colon C(R)\to S_R$ such that 
$M(R):=f_R(C(R))$ is an element of $(d,d)$ with two points $a_{m(R)}$ and $a_{n(R)}$ of multiplicity $d-1$ where $a_{m(R)}$ is the image of the unique effective divisor of $|\theta(R)+n(R)|$ while $a_{n(R)}$ is the image of the unique effective divisor of $|\theta(R)+m(R)|$. We denote by 
$\langle a_{m(R)}, f_R(m(R))\rangle$ and respectively 
$\langle a_{n(R)}, n(R)\rangle$ the corresponding fibers of $\pi_{1,R}\colon Q_R \to |\sL_{\theta(R) +m(R)+n(R)}|^{\vee}$. By construction the two Weierstrass conics $\Delta_H$ and $\Delta_{H'}$ gives respectively the two fibers $L_R$, $L'_R$ of $\pi_{2,R}\colon Q_R\to |\sL_W(R)|^{\vee}$ formed respectively by the two pieces $w_1(R)+...+w_d(R)$ and $w_{1}^{'}(R)+...+w_{d}^{'}(R)$ of the partition $\sP_{\theta(R)}$. We denote by
$$
\Phi_R\colon C(R) \to Q_R
$$ the product morphism, that is $\Phi_R= \phi_{\sL_{\theta(R)+m(R)+n(R)}}\times \phi_{\sL_W(C(R))}$. 
We stress that if $J_R\colon C(R)\to C(R)$ is the hyperelliptic involution then we have two other points $m'(R)=J_R(m(R))$ and $n'(R)=J_R(n(R))$. Finally we think it helps the reader to remind him that we have the isomorphisms 
$\iota_R\colon Q_R\to S_R\subset \mP( H^0(C(R), (\sL_{\theta(R) +m(R)+n(R)})^{\vee}\times H^0(C(R), (\sL_W(R))^{\vee})=\mP^3$, and $\iota_q\colon Q_q \to S_q\subset {\rm{Hilb}} ^{Q}_1$.


\begin{prop}\label{identificazione} Let $C(R)$ be the scheme of marked lines of the $4$-ple $(Q,q,H, R)$. There is a natural identification $$I_{R} \colon Q_{R} \to Q_q(=q\times\Lambda)$$ such that: 
\begin{enumerate}[{1)}]
\item the fiber $\pi_{q}^{-1}(x)=I_R(\pi_{1,R}^{-1}(\pi_{1,R}( \iota_R^{-1}(a_{m(R)})))$
\item the fiber $\pi_{q}^{-1}(y)=  I_R(\pi_{1,R}^{-1}(\pi_{1,R}(\iota_R^{-1}(a_{n(R)})))$
\item the fiber $\pi_{\Lambda}^{-1}[\Pi_z])=I_R(L_R)$
\item the fiber $\pi_{\Lambda}^{-1}[\Pi_{z'}])=I_R (L'_R)$
\item $[l_x] = \iota_q(I_R(\iota_R^{-1}(a_{m(R)})))$
\item $[l_y]=\iota_q(I_R(\iota_R^{-1}(a_{n(R)})))$
\item $[l_{[p_x(R), x']}]= \iota_q(I_R(\iota_R^{-1}(f_R(m'(R)))))$
\item $[l_{[p_y(R), y']}]= \iota_q(I_R(\iota_R^{-1}(f_R(n'(R)))))$
\item $[l_{[p_x(R), x]}]= \iota_q(I_R(\iota_R^{-1}(f_R(m(R)))))$
\item $[l_{[p_y(R), y]}]=  \iota_q(I_R(\iota_R^{-1}(f_R(n(R)))))$
\end{enumerate}
\end{prop}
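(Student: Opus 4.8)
The strategy is to construct the identification $I_R$ ``by hand'' as a product of identifications on each factor, and then to check the ten enumerated compatibilities essentially by chasing through the two parallel descriptions of the relevant linear systems. Recall that $Q_R = |\sL_{\theta(R)+m(R)+n(R)}|^\vee \times |\sL_W(R)|^\vee$ and $Q_q = q \times \Lambda$. The key point is that both factors of $Q_R$ are $\mP^1$'s carrying four or two distinguished points, and the same is true of the factors of $Q_q$: on $q$ we have the four points $x,x',y,y'$ (from $q \cap T_{p_x(R)}Q$ and $q \cap T_{p_y(R)}Q$), and on $\Lambda$ we have the two branch points $[\Pi_z],[\Pi_{z'}]$ together with the two marked points coming from the rulings through $m(R)$ and $n(R)$. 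So I would first pin down, on the $|\sL_W(R)|^\vee$-side, the isomorphism with $\Lambda$ by sending $L_R \mapsto [\Pi_z]$, $L'_R \mapsto [\Pi_{z'}]$; since both are $\mP^1$ with two marked points this determines the isomorphism up to the residual parameter, which I fix using the behaviour of the hyperelliptic involution (Corollary \ref{Serveassai} identifies the fixed points of $j_C$ with $\pi_W(L), \pi_W(L')$, matching the branch points of $\rho_\Lambda$). On the $|\sL_{\theta(R)+m(R)+n(R)}|^\vee$-side I would send $\pi_{1,R}(\iota_R^{-1}(a_{m(R)})) \mapsto x$ and $\pi_{1,R}(\iota_R^{-1}(a_{n(R)})) \mapsto y$, which together with one more normalization (e.g. the image of $f_R(m(R))$, matching item 9) fixes the isomorphism.

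\textbf{Key steps.} (1) Use Theorem \ref{useusefulful} to realize $f_R(C(R)) = M(R) \subset S_q$ as a $(d,d)$-curve with two points of multiplicity $d-1$ supported on $[l_x],[l_y]$; use Lemma \ref{struttura quadricaq} to get the two fibrations $\pi_q, \pi_\Lambda$ of $S_q$. (2) Use Lemma \ref{degreedlinear} and Corollary \ref{Serveassai} to realize $M(R) = \Phi_R(C(R)) \subset S_R$ with the analogous structure: multiplicity-$(d-1)$ points on $a_{m(R)}, a_{n(R)}$, Weierstrass fibers $L_R, L'_R$, and the tangent-plane description of the two rulings through those points. (3) Identify the two pencils of lines: a fiber of $\pi_{1,R}$ is the linear system $|\sL_{\theta(R)+m(R)+n(R)}|$ minus a point, i.e. a $g^1_d$, and via $f_R$ this becomes the family of lines of $Q$ through a fixed point $a \in q$ — which is exactly a fiber $\pi_q^{-1}(a)$ by Lemma \ref{struttura quadricaq}(1); the content of Corollary \ref{immaginichiave} (that $\sum [x_i,x] \in |\theta(R)+n_y(R)|$ maps under $f_R$ to the single point $[l_x]$) is precisely what forces $a_{m(R)} \leftrightarrow [l_x]$, i.e. items 5 and 1. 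Symmetrically for items 6 and 2. (4) Identify the other pencil: a fiber of $\pi_{2,R}$ is a $g^1_2$-orbit, i.e. $|\sL_W(R)|$ minus a point, and the two fibers $L_R, L'_R$ correspond to the two pieces $\{w_i(R)\}$, $\{w'_i(R)\}$ of the partition $\sP_{\theta(R)}$; on the $Q_q$ side, the supporting lines $l_i$ of the $w_i(R)$ all lie on a single line of $\mP^3$ (shown in the proof of Proposition \ref{thetatheta} via Tango's theorem \ref{tango}), and that line is $\pi_\Lambda^{-1}([\Pi_z])$ because the $l_i$ all pass through $z$ (they equal $\langle z, s_i \rangle$); this gives items 3 and 4. (5) Finally, items 7--10 are bookkeeping: $m(R) = [p_x(R),x]$ and $n(R) = [p_y(R),y]$ and their hyperelliptic conjugates $m'(R), n'(R)$ have explicitly named supporting lines (Remark \ref{remarco}), and one checks that under $\Phi_R$ these four points land on the two rulings through $a_{m(R)}, a_{n(R)}$ exactly as in the tangent-plane description of Corollary \ref{Serveassai}, so that the already-constructed $I_R$ sends them where claimed.

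\textbf{Main obstacle.} The delicate part is not any single compatibility but rather checking that the \emph{same} map $I_R$ simultaneously satisfies all ten items — i.e. that the normalizations forced by the $\pi_{1,R}$-side (items 1, 2, 5, 6, 9, 10) are consistent with those forced by the $\pi_{2,R}$-side (items 3, 4) and with the involution constraint. Concretely, once one declares $I_R$ to send the two multiple-point fibers and the two Weierstrass fibers correctly, there remains one scalar ambiguity on each $\mP^1$ factor, and one must verify that the remaining items (especially the placement of $f_R(m'(R)), f_R(n'(R))$, items 7--8) are then automatically correct rather than over-determining the map. I expect this to follow from the cross-ratio rigidity of four points on $\mP^1$ together with the fact — traceable to equation (\ref{itiseta}) and Lemma \ref{fatt} — that $\sL_{\theta(R)+m(R)+n(R)} \otimes \sL_W(R) = \sO_{C(R)}(\eta)$, so that the two projections of $\Phi_R$ and the two projections of $f_R = \iota_q \circ (\text{lines-through-}q) $ are tied together by the single embedding $\phi_{|\eta|}$; chasing this identification of $\eta$ on both sides is the real work, and the rest is formal.
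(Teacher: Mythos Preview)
Your overall strategy coincides with the paper's: build $I_R$ factorwise by identifying $|\sL_{\theta(R)+m(R)+n(R)}|^\vee$ with $q$ and $|\sL_W(R)|^\vee$ with $\Lambda$, and then verify items 1)--10) by chasing the distinguished points. Two comments.

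First, a small slip: in your step (4) you call a fiber of $\pi_{2,R}$ ``a $g^1_2$-orbit''. It is not; $\sL_W(R)=\sO_{C(R)}(\theta(R)+g^1_2)$ is a $g^1_d$, and each fiber is a degree-$d$ effective divisor (as you tacitly use a line later when you identify $L_R,L'_R$ with the two halves of the Weierstrass partition).

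Second, and more substantively: your device for killing the residual $\mathbb G_m$-ambiguity does not work as stated. On the $\Lambda$-side, an involution of $\mP^1$ with prescribed fixed points commutes with the entire $\mathbb G_m$ fixing those points, so ``matching $j_C$ with $j_\Lambda$'' adds nothing beyond matching $\{L_R,L'_R\}$ with $\{[\Pi_z],[\Pi_{z'}]\}$. On the $q$-side, your proposed third normalisation via item 9) is vacuous there, since $\pi_q([l_{[p_x(R),x]}])=x$ and $\pi_{1,R}(\Phi_R(m(R)))=\pi_{1,R}(\iota_R^{-1}(a_{m(R)}))$ already, so item 9) projects to item 1). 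Your fallback via the $\eta$-relation is correct in spirit, but when you unpack it you arrive precisely at what the paper does. Rather than characterising $I_R$ by images of finitely many points and then worrying about over- or under-determination, the paper \emph{defines} each factor map by a geometric formula and checks it is well-defined: the map $|\sL_{\theta(R)+m(R)+n(R)}|^\vee\to q$ is obtained by showing that the second projection $C(R)\subset R\times q\to q$ \emph{is} $\phi_{|\theta(R)+m(R)+n(R)|}$ (verified on the fibers over $x,y$); the map $|\sL_W(R)|^\vee\to\Lambda$ sends the $|\sL_W|$-fiber through a general marked line $[t,a]$ to the pair $\bigl([\langle t,\mP(W)\rangle],[\text{ruling of }l_{[t,a]}]\bigr)\in\Lambda$, which is well-defined because that entire fiber equals $D_{[t,a']}+[t,a]$ and, by Proposition~\ref{thetatheta}, every supporting line in it lies in $\langle t,\mP(W)\rangle$ and in the ruling of $l_{[t,a]}$. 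With $I_R$ so defined there is no ambiguity left, and items 1)--10) become direct verifications; the one genuine calculation your sketch omits is, for items 5)--8), determining \emph{which} ruling of $Q\cap\langle\mP(W),p_y(R)\rangle$ the line $l_x$ belongs to (it is the ruling of $l_{[p_y(R),y']}$, since $l_x$ meets $l_{[p_y(R),y]}$), and the analogous checks for $l_y$ and $l_{[p_x(R),x']}$.
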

\begin{proof}
Let $[t,a]\in C(R)\subset R\times q$ be a general marked line. The hyperplane $\Pi:=\langle t,\mP(W)\rangle$ is such that $[\Pi]\in \mP({\rm{Ann}}(W))$. The line $l_{[t,a]} $ identifies only one of the two rulings of $Q_{\Pi}:=Q\cap\Pi$. 
By the same construction we see that via the hyperelliptic involution $j(R)\colon C(R)\to C(R)$, $j(R)\colon ([t,a])\mapsto [t,a']$ the line $l_{[t,a']}$ identifies the other ruling of $Q_{\Pi}$. This makes possible to identify naturally $|\sL_{W(R)}|^{\vee}$ to $\Lambda$. Indeed 
$\sL_{W(R)}=\sO_{C(R)} (\theta(R)+g^1_2)$ hence the fiber containing the support of the unique effective divisor of $|\theta(R)+[t,a] |$ is the one passing through $[t,a']$ and the claim follows by Proposition \ref{thetatheta}. 
By self-explaining notation we have identified $|\sL_{W(R)}|^{\vee}$ to $\Lambda$ in a way such that it holds the following:
\begin{enumerate}[{i)}]
\item $\pi^{-1}_{2,R}(\pi_{2,R}(\Phi_R(m(R))))\leftrightarrow ([\langle\mP(W),p_x(R)\rangle],[\langle p_x(R),x\rangle])$;
\item  $\pi^{-1}_{2,R}(\pi_{2,R}(\Phi_R(n(R)))) \leftrightarrow ([\langle\mP(W),p_y(R)\rangle],[\langle p_y(R),y\rangle])$
\item $\pi^{-1}_{2,R}(\pi_{2,R}(\Phi_R(m'(R)))) \leftrightarrow([\langle\mP(W),p_x(R)\rangle],[\langle p_x(R),x'\rangle])$;
\item $\pi^{-1}_{2,R}(\pi_{2,R}(\Phi_R(n'(R))))\leftrightarrow ([\langle\mP(W),p_y(R)\rangle],[\langle p_y(R),y'\rangle])$;
\item $L_R\leftrightarrow \Pi_z$
\item $L_R' \leftrightarrow \Pi_{z'}$
\end{enumerate}
We consider the embedding $\iota_q\colon Q_q\to S_q$. We want to stress that by our identification it holds that the point $\iota^{-1}_{q}([l_x])\in \pi^{-1}_{2,R}(\pi_{2,R}(\Phi_R(n'(R))$. Indeed for the line $l_x$ it holds that $l_x\subset \langle\mP(W),p_y(R)\rangle$ since its two points $x$ and $l_x\cap l_{[p_y,y]}$ belong to $\Pi=\langle\mP(W),p_y(R)\rangle$. By construction also the line $\langle p_y(R), y'\rangle$ belongs to $Q\cap\Pi$, but since $l_x\cap l_{[p_y,y]}\neq\emptyset$ it holds that $l_x$ belongs to the same ruling of $Q\cap \Pi$ which contains the line $\langle p_y(R), y'\rangle$.  Hence the point $\iota_{q}^{-1}([l_x])\in S_q$ is a point on $\pi^{-1}_{2,R}(\pi_{2,R}(\Phi_R(n'(R))$ and analogously $\iota_{q}^{-1}([l_y])\in \pi^{-1}_{2,R}(\pi_{2,R}(\Phi_R(m'(R))$.

Te quadric $Q_H$ which contains $R$ has two projections $\pi_{i}^{Q_H}\colon Q_H\to\mP^1_{Q_{H}, i }$, $i=1,2$. Since $R\in |(1,d-1)|$ the restriction $\pi_{2_{|R}}^{Q_H}\colon R\to  \mP^1_{Q_{H},2}$ is an isomorphism.
Now we take the embedding $C(R)\subset R\times q$. We consider the composition of the two elementary transformations centred on $[p_x(R),x]\in C(R)$ and respectively on (the strict transform of ) $[p_y(R),y]\in C(R)$  and which maintain the fibration $\pi_q^{R\times q}\colon R\times q\to q$:
\begin{equation}
\label{eq:elementary}
\xymatrix{
& R\times q \ar[dl]_{    \pi_R^{R\times q  }   }\ar[dr]^{  \pi_{q}^{R\times q} }    &\dashrightarrow & 
Q_{R}   \ar[dl]_{    \pi_1^{R }   }    \ar[dl] \ar[dr] ^{  \pi_{2}^{R} }        & \\
 R=\mP^1_{Q_{H},2} &  & q=  |\sL_{\theta(R) +m(R)+n(R)}|^{\vee}& &  |\sL_W(R)|^{\vee} . }
\end{equation}
 
 Now we show that if $a\in q$ and if $H_a$ is the hyperplane section given by the (projective) tangent space to $Q$ at $a$ then $H_a\cap R$ consists on $d$ points, $t_1(a),..., t_d(a)\in R$ such that $\sum_{i=1}^{d}[t_i(a),a]\in |\theta(R)+m(R)+n(R)|$. Indeed this is easy to be checked over the two points $x,y\in H\cap q$ and this shows that the restriction $\pi_{q|C(R)}^{R\times q}\colon C(R)\to q$ is given by the linear system $|\theta(R)+m(R)+n(R)|$. This induces a natural identification $q=  |\sL_{\theta(R) +m(R)+n(R)}|^{\vee}$ and by the previous identification $\Lambda=|\sL_W(R)| ^{\vee}$ the claims $1)$, $2)$, $3)$, $4)$, $5)$, $6)$ follow. It remains to show that also $7)$, $8)$, $9)$, $10)$ hold. 
 We only show $7)$ and $9)$ since the proof of the other claims are analogue. First we show $9)$. Inside $S_q$ the point $[l_{[p_x(R),x]}]$ is given by $\pi_q^{-1}(x)\cap \pi_{\Lambda}^{-1}([\langle\mP(W),p_x\rangle],[\langle p_x(R),x\rangle])$. 
 On the other hand we have that  the unique effective divisor inside $|\theta(R)+m(R)+n(R)|$ which contains $m(R)$ in its support is the one given by the unique effective divisor of $|\theta(R)+n(R)|$ plus $m(R)$, which, by Proposition \ref{thetatheta}, is given by the marked lines through the point $x$. We also have that the unique effective divisor inside $|\sL_W(R)|$ which contains $m(R)$ is given by the hyperplane section $\langle \mP(W),p_x(R)\rangle\cap Q$ where we consider on it the ruling given by $[p_x(R),x]$. Finally we show $7)$. Inside $S_q$ the point $[l_{[p_x,x']}]$ is given by $\pi_q^{-1}(x)\cap \pi_{\Lambda}^{-1}([\langle\mP(W),p_x(R)\rangle],[\langle p_x(R),x'\rangle])$.  On the other hand we have that  the unique effective divisor inside $|\theta(R)+m(R)+n(R)|$ which contains $m'(R)$  is the one given by the marked lines through the point $x'$. We also have that the unique effective divisor inside $|\sL_W(R)|$ which contains $m'(R)$ is given in our interpretation $|\sL_W(R)|^{\vee} \leftrightarrow\Lambda$ by the hyperplane section $\langle \mP(W),p_x(R)\rangle\cap Q$ where we consider on it the ruling given by $[p_x(R),x']$.
 \end{proof}

\section{The Reconstruction Theorem}

\subsection{Reconstruction via space singular models}

In Lemma \ref{degreedlinear} we have constructed a model $M\subset\mP^3$ of $(C,\theta,m,n)$ which shares many common features with the singular model $M(R)\subset{\rm{Hilb}}^{Q}_{1}$ constructed in Proposition \ref{thesingularmodel}. Let $[H]\in\mP(V^{\vee})$ be a general element with respect to $(Q,q)$. We denoted by $\sM_H$ the open subscheme $|(1,d-1)|^{oo}\subset (1,d-1)|$ given by the smooth elements inside $Q_H$ which satisfy the generality conditions \ref{generality conditions}. 

We are ready to show the Reconstruction Theorem:
\begin{thm}\label{reconstruction}
The morphism $\pi_{\sM_H} \colon\sM_H \to \sS^{+,{\rm{hyp}}}_{d-1,2}$ is dominant.
\end{thm}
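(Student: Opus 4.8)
The strategy is to exhibit an explicit inverse construction: starting from a general $[C,\theta,m,n]\in\sS^{+,{\rm{hyp}}}_{d-1,2}$, recover a curve $R\in\sM_H$ such that $[C(R),\theta(R),m(R),n(R)]=[C,\theta,m,n]$, after possibly changing the chosen hyperplane $H$ within its (one-dimensional) allowed family. First I would fix once and for all the triple $(Q,q)$ and note that the group of projective automorphisms of $Q$ fixing $q$ acts transitively on the pencil $\mP({\rm{Ann}}(W))$ of admissible hyperplanes, so it is harmless to work with one fixed general $H$; the content to be proved is that the map $\nu\colon[R]\mapsto[C(R),\theta(R),m(R),n(R)]$ of Theorem \ref{useusefulful} has dense image. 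The heart of the argument is the identification of the two quadric surfaces carrying the relevant singular model: by Lemma \ref{degreedlinear} and Lemma \ref{quadica} the datum $[C,\theta,m,n]$ produces a canonical morphism $f_{\theta,m,n}\colon C\to S_{C,\theta,m,n}\subset\mP^3$ to a smooth quadric surface, with image of bidegree $(d,d)$ having two points $a_m,a_n$ of multiplicity $d-1$ and two Weierstrass lines $L,L'$ in one ruling; on the other side, by Theorem \ref{useusefulful} and Lemma \ref{struttura quadricaq}, the curve $M(R)$ lives on the smooth quadric $S_q\subset{\rm{Hilb}}^Q_1=\mP^3$, again of bidegree $(d,d)$ with two $(d-1)$-fold points $[l_x],[l_y]$ and Weierstrass lines coming from the Weierstrass conics $\Delta_H,\Delta_H'$.

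The key step is to construct, for a given general $[C,\theta,m,n]$, an isomorphism of \emph{pairs} $(\mP^3,S_{C,\theta,m,n})\cong({\rm{Hilb}}^Q_1,S_q)$ that matches $a_m\mapsto[l_x]$, $a_n\mapsto[l_y]$, the tangent-plane data of Corollary \ref{Serveassai} to the ruling data of Lemma \ref{struttura quadricaq}, and the pair of Weierstrass lines $\{L,L'\}$ to $\{\pi_\Lambda^{-1}[\Pi_z],\pi_\Lambda^{-1}[\Pi_{z'}]\}$. This is exactly the content reverse to Proposition \ref{identificazione}: there the identification $I_R$ was built \emph{from} a known $R$; here I must build it from the abstract moduli datum. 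The point is that $\mathrm{Aut}(S_q)$ inside $\mathrm{Aut}({\rm{Hilb}}^Q_1)$ is large enough — it acts transitively on suitably generic configurations of (ordered pair of points in ``general position'', ordered pair of lines in the $\Lambda$-ruling) — so such an isomorphism exists and is unique up to the residual finite ambiguity (the exchanger of rulings $j_\Lambda$ and the swap of $z\leftrightarrow z'$, i.e. of $m\leftrightarrow n$-type data). Under this isomorphism, $f_{\theta,m,n}(C)\subset S_{C,\theta,m,n}$ is carried to a curve $M\subset S_q$ of bidegree $(d,d)$ with the prescribed two $(d-1)$-fold points on $[l_x],[l_y]$.

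Next I would reconstruct $R$ from $M\subset S_q\subset{\rm{Hilb}}^Q_1$. Each point of $M$ is a line of $Q$ meeting $q$; the family of these lines sweeps out a surface in $Q$, and projecting from the ``vertex'' data one recovers, for the general point of $M$, a well-defined point of $Q$ lying on the hyperplane section $Q_H$ (the hyperplane $H$ being pinned down by the images of $[l_x],[l_y]$, equivalently by the points $z,z'$); this yields a curve $R\subset Q_H$, and one checks via the incidence geometry that the scheme of marked lines of $(Q,q,H,R)$ is precisely $C$, that the incidence theta-characteristic $\theta(R)$ agrees with $\theta$ (both being read off from the $(d-1)$ residual marked lines meeting a general one, cf. Proposition \ref{thetatheta} and Corollary \ref{Serveassai}), and that $m(R)=m$, $n(R)=n$ by tracking the two special marked lines through $x,y$. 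A dimension/genericity check shows $R\in|(1,d-1)|$ and that $R$ lands in the open locus $\sM_H=|(1,d-1)|^{oo}$ once $[C,\theta,m,n]$ is chosen general; hence $\nu([R])=[C,\theta,m,n]$ and $\pi_{\sM_H}$ is dominant. The main obstacle is the middle step: producing the isomorphism of pairs $(\mP^3,S_{C,\theta,m,n})\cong({\rm{Hilb}}^Q_1,S_q)$ canonically from the moduli datum and verifying that all the incidence structures (tangent planes, rulings, Weierstrass lines, the null-correlation $\nabla$) are preserved, so that the reconstructed $R$ genuinely reproduces $(C,\theta,m,n)$ and not merely an abstractly isomorphic curve on an abstractly isomorphic quadric.
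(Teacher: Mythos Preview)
Your overall strategy coincides with the paper's: identify the abstract quadric $S_{C,\theta,m,n}$ with $S_q\subset{\rm Hilb}^Q_1$, transport $f_{\theta,m,n}(C)$ into $S_q$, reconstruct $R$ from it, and verify $[C(R),\theta(R),m(R),n(R)]=[C,\theta,m,n]$. Two points, however, diverge from the paper and one of them is a real gap.

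\textbf{A confusion about $H$.} You write that $H$ ranges over the pencil $\mP({\rm Ann}(W))$ and that ${\rm Aut}(Q,q)$ acts transitively there. But $\mP({\rm Ann}(W))$ is the pencil of hyperplanes \emph{containing} $q$, whereas by Generality Condition~(2) one has $[H]\notin\mP({\rm Ann}(W))$; the hyperplane $H$ is transverse to $q$ and is fixed once and for all. Likewise, $H$ is not ``pinned down by $z,z'$'': those two points determine only the pencil through $q$, not $H$. This is harmless for the argument (one simply keeps $H$ fixed), but your justification is incorrect.

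\textbf{Reconstruction of $R$.} The paper does \emph{not} sweep out $R$ by intersecting all lines of $M$ with $H$. Instead it uses only the $2d$ Weierstrass points: after the identification, $f(w_i)\in L$ and $f(w'_j)\in L'$ become explicit lines through $z$ (resp.\ $z'$); intersecting those lines with $H$ gives $2d$ points on the Weierstrass conics $\Delta_H,\Delta'_H\subset Q_H$, and a simple intersection count shows there is a \emph{unique} $R\in|(1,d-1)|$ through $2d-1$ of them. Your global ``intersect with $H$'' map $S_q\dashrightarrow Q_H$ is $2$-to-$1$ (through a general $p\in Q_H$ pass two lines of $Q$ meeting $q$), so you would still need to argue that the image of $M$ lies in $|(1,d-1)|$, which you do not do.

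\textbf{The missing step.} The genuine gap is your ``one checks via the incidence geometry that the scheme of marked lines of $(Q,q,H,R)$ is precisely $C$''. This is the whole content of the paper's Step~4 and is not automatic: at that stage $M\subset S_q$ was placed there by an \emph{abstract} identification, not by the marked-line construction, so one must prove $M=M(R)$. The paper does this by projecting from $[l_x]$ to $\mP^2$, obtaining the plane models $M_{\theta,m}$ and $M_{\theta(R),m(R)}$ of Lemma~\ref{lem:reconst}, tallying their forced intersection multiplicities (the common $(d-1)$-fold point, the $2d-1$ shared tangential Weierstrass points, and three further simple points) to get $(d+1)^2+1>(d+1)^2$, and concluding by B\'ezout that the two degree-$(d+1)$ plane curves coincide. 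Without this computation your argument does not close.
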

\begin{proof} The proof is divided in four steps.
\medskip

\noindent
{\it{First Step. Identification of $Q_{C,\theta,m,n}$ to $Q_q$.}} 

Let $[(C,\theta,m,n)]\in  \sS^{+,{\rm{hyp}}}_{d-1,2}$ be a general element. In Corollary \ref{Serveassai} we have constructed the morphism $$\Phi\colon C\to Q_{C,\theta,m,n}=|\sL_{ \theta+m+n}|^\vee\times  |\sL_{ W}|^\vee$$ and its image $M=\Phi(C)$. We recall that the support of $\sum_{i=1}^{d-1}n_i\in |\theta+m|$ is a point $a_m\in M$ of multiplicity $d-1$ and analogously the image $a_n\in M$ of $\sum_{i=1}^{d-1}m_i\in |\theta+n|$ is the other point of multiplicity $d-1$: 
it is not necessary here to distinguish between $Q_C$ and its image inside $\mP^3$ denoted by $S_C$. On the contrary it is better to distinguish between $Q_q=q\times\Lambda$ and its image $\iota_q(Q_q)=S_q\subset{\rm{Hilb}}^{Q}_{1}$. Let us denote 
by $$\pi_{\theta,m,n}\colon Q_C \to |\sL_{ \theta+m+n}|^\vee\,\,{\rm{and}}\,\,\, \pi_W\colon Q_C\to  |\sL_{ W}|^\vee$$
the two natural projections. We know that $\Phi(n)\in \pi_{\theta,m,n}^{-1}(\pi_{\theta,m,n}(a_{n}))$, $\Phi(n)\neq a_n$ and that 
the two points $\pi_W(a_n),\pi_W(\Phi(n))\in |\sL_{ W}|^\vee$ are also distinct. The same holds for $m$ and $a_m$.

We consider the hyperelliptic involution $J_C\colon C\to C$. It induces an involution $j_{C}\colon  |\sL_{ W}|^\vee\to |\sL_{ W}|^\vee$. It holds that $j_C(\pi_W(a_n))=\pi_W(\Phi(m))$ and that  $j_C(\pi_W(a_m))=\pi_W(\Phi(n))$ since Cortollary \ref{Serveassai}. Now we consider also the rulings exchanger
 $j_{\Lambda}\colon \Lambda\to\Lambda$; see Definition \ref{exchanger}. By construction $j_{\Lambda}( \pi_{\Lambda} (   \iota_q^{-1}([l_y])) ) = \pi_{\Lambda} (   \iota_q^{-1}(  [m_x]  ))$ and $j_{\Lambda}(\pi_{\Lambda} (   \iota_q^{-1}([l_x]))) = \pi_{\Lambda} (   \iota_q^{-1}(  [n_y]  ))$. 
 Any two non trivial involution over $\mP^1$ are conjugate.  
 Then $$((|\sL_{ W}|^\vee, j_C),\pi_W(a_n),\pi_W(a_m),\pi_W(\Phi(m)), \pi_W(
 \Phi(n)))$$ can be identified to 
 $$
 ( (\Lambda, j_{\Lambda}),\pi_{\Lambda} (   \iota_q^{-1}([l_y])) , \pi_{\Lambda} (   \iota_q^{-1}([l_x])),   \pi_{\Lambda}(\iota_q^{-1}(  [m_x]  )) ,\pi_{\Lambda}(\iota_q^{-1}(  [n_y] ) )).
 $$
 This forces the identification of {\it{the set}} given by the two fixed points of the rulings exchanger, $[\Pi_z],[\Pi_{z'}]$ to the set given by the fixed points of $j_{C}\colon  |\sL_{ W}|^\vee\to |\sL_{ W}|^\vee$. Now consider the other projection 
 $\pi_{\theta,m,n}\colon Q_C \to |\sL_{ \theta+m+n}|^\vee$ and we can identify $\pi_{\theta,m,n}(a_{m})$ to $x$ and  $\pi_{\theta,m,n}(a_{n})$ to $y$. In other words we identify $(|\sL_{ \theta+m+n}|^\vee, \pi_{\theta,m,n}(a_{m}), \pi_{\theta,m,n}(a_{n}))$ to $(q,x,y)$.
 Since $Q_C= |\sL_{ \theta+m+n}|^\vee\times  |\sL_{ W}|^\vee$ and $Q_q=q\times\Lambda$ we have built an identification $Q_C\leftrightarrow Q_q$ such that:
 \begin{enumerate}
\item $a_m\leftrightarrow \iota_q^{-1}([l_x])$;
\item $a_n \leftrightarrow  \iota_q^{-1}([l_y])$;
\item $\{L,L'\} \leftrightarrow \{\pi_{\Lambda}^{-1}([\Pi_z]),\pi_{\Lambda}^{-1}([\Pi_z'])\}$;
\item $\Phi(m)\leftrightarrow [m_x]$
\item $\Phi(n)\leftrightarrow [n_y]$
\end{enumerate}
\medskip

\noindent
{\it{Second step. Identification of $(\mP^3,S_C)$ to $({\rm{Hilb}}^{Q}_{1}, S_q)$.}}
By the First step we have $Q_{C}\leftrightarrow Q_q$. Then by the $|(1,1)|$ linear system on each one of the two surfaces we can identify $\mP^3$ to ${\rm{Hilb}}^{Q}_{1}$ and respectively $Q_C$ to $S_q$.
\medskip

\noindent
{\it{Third step. Construction of the rational curve $[R]\in \sM$.}} By Lemma \ref{degreedlinear} $(6)$ and by the above identification  of $\mP^3$ to ${\rm{Hilb}}^{Q}_{1}$, the points $f(w_1),...,f(w_d)\in L$ and $f(w'_1),...,f(w'_d)\in L'$ give $d$ lines $l_1,...,l_d$
 of $T_zQ=\Pi_z$ such that $[l_i]=f(w_i)$, and respectively $d$ lines $l'_1,...,l'_d$ of $T_{z'}Q=\Pi_{z'}$ such that $[l'_j]=f(w'_j)$, $i,j=1,...,d$; 
 (we have taken $L\leftrightarrow  \pi_{\Lambda}^{-1}([\Pi_z])$, $L'\leftrightarrow \pi_{\Lambda}^{-1}([\Pi_z'])$, but there is no problem if it were true the opposite case. Actually the curve $C(R)$ has not yet been built; here there is no hidden $\mathbb Z/2\mathbb Z$-action).

 Then there exist $s_1,...,s_d\in \Delta_{H}=Q_{H}\cap T_{z}Q$ and $s'_1,...,s'_d\in \Delta_{H'}=Q_{H}\cap T_{z'}Q$ such that $\{s_i\}=l_i\cap H$ and $\{s'_i\}=l'_i\cap H$.

We claim that there exists a unique $R\subset Q_H$, $R\in |(1, d-1)|$ such that $s_1,...,s_d, s'_1,...,s'_{d-1}\in R$. Indeed consider a curve $R\subset Q_H$ such that $[R]\in |(1, d-1)|$ which passes through $2d-1$, say $s_1,...,s_d, s'_1,...,s'_{d-1}$, among the $2d$ points  $s_1,...,s_d, s'_1,...,s'_d$ of $Q_H$. For every other $R'\in  |(1, d-1)|$ it holds that $R\cdot R'=2d-2$.  Hence $R$ is the unique element of $|(1, d-1)|$ such that $s_1,...,s_d, s'_1,...,s'_{d-1}\in R$.

We claim that $R$ is smooth. Indeed since $[(C,\theta,m,n)]\in  \sS^{+,{\rm{hyp}}}_{d-1,2}$ is general the points $f(w_1),...,f(w_d)$ are general inside $L$ and the points $f(w'_1),...,f(w'_d)$ are general in $L'$. This implies that $l_1,...,l_d$ are $d$ general lines of the cone $Q\cap T_zQ$ and $l'_1,...,l'_d$ are $d$ general lines of $Q'\cap T_{z'}Q$. In particular $s_1,...,s_d$ are $d$ general points of $\Delta_H$ and $s'_1,...,s'_{d-1}$ are general points of $\Delta_{H'}$. This forces $R$ to be smooth otherwise it would be reducible since $\rho_{a}(R)=0$. But if $R$ is reducible this contradicts the fact that the above points of $\Delta_H$ and respectively of $\Delta_{H'}$ are in general position. (Note that we are not claiming that the couple of the two $d$-uples 
$((s_1,...,s_d,), (s'_1,...,s'_{d}))$ is general in $\mathfrak{S}^d(\Delta)\times\mathfrak{S}^{d}(\Delta')$ where $\mathfrak{S}^d(Z)$ is the symmetric product of a variety $Z$). 
\medskip

\noindent
{\it{Fourth step: $C=C(R)$.}} From now on we can identify $Q_q$ to $S_q$. By construction the images $M,M(R)\subset S_q$ pass through the points $[l_x],[l_y], [m_x], [n_y]$. They have a point of multiplicity $d-1$ on $[l_x]$ and on $[l_y]$.

Moreover the line $l_{[p_x, x']}$ and the  image $f(m')$ belong to the fiber of 
$\pi_{\Lambda}\colon S_q\to \Lambda$ which passes through $[l_y]$ and analogously  the line $l_{[p_y, y']}$ and the  image $f(n')$ belong to $\pi_{\Lambda}^{-1}(\pi_{\Lambda}([l_x]))$.
 Now we project $\pi_{ [l_x] } \colon\mP^3\setminus \{ [l_x] \} \to\mP^2$. It is obvious that the morphism $\pi_{[l_x]} \circ f_{C,\theta,m,n}\colon C\to M_{\theta,m}\subset \mP^2$ is exactly the morphism $|\varphi_{\theta+m+g^1_2|}$ of Lemma \ref{lem:reconst} and that 
 $\pi_{[l_x] }\circ f_{C(R),\theta(R),m(R),n(R)}\colon C(R)\to M_{\theta(R),m(R)}\subset \mP^2$  is exactly the morphism given by $|\theta(R)+m(R)+g^1_2(R)|$. In particular by Lemma \ref{lem:reconst} $M_{\theta,m}$ and $M_{ \theta(R),m(R) } $ 
 are two plane curves of degree $d+1$ which have in common the following points $\pi_{[l_x]}([n_y])$ $\pi_{[l_x]}([m_x])$, $\pi_{[l_x]}(f_{C,\theta,m,n}(n'))=\pi_{[l_x]}([l_{[p_y,y']})$. Moreover both have a point of multiplicity $d-1$ on $\pi_{[l_x]}([l_y])$ and they share also $2d-1$ points where they meet tangentially. It holds that 
$$
M_{\theta,m}\cdot M_{\theta(R), m(R)}= (d-1)^2+2(2d-1) +2 +1=(d+1)^2+1.
$$
By Bezout's Theorem $M_{\theta,m}=M_{\theta(R), m(R)}$. Since the restriction of $\pi_{ [l_x] } \colon\mP^3\setminus \{ [l_x] \} \to\mP^2$ to $S_q\setminus \{ [l_x] \}$ is birational then $M=M(R)$. By unicity of the normalisation morphism it holds 
$C=C(R)$ and $f_{ \theta, m, n}=f_{\theta(R), m(R), n(R)}$. Then $\theta=\theta(R)$, $m=m(R)$ and $n=n(R)$.
\end{proof}
\begin{cor} The moduli space of spin hyperelliptic curves with two marked points is irreducible and unirational.
\end{cor}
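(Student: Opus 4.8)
The plan is to read both assertions off from the Reconstruction Theorem \ref{reconstruction}, which already carries all of the geometry. Write $g=d-1$, so the space in question is $\sS^{+,{\rm{hyp}}}_{d-1,2}$. First I would observe that the parameter space $\sM_H=|(1,d-1)|^{oo}$ is an irreducible rational variety: the surface $Q_H\cong\mP^1\times\mP^1$ is a smooth quadric, so $h^0(Q_H,\sO(1,d-1))=2d$ and the complete linear system $|(1,d-1)|$ is a projective space $\mP^{2d-1}$; the subset $\sM_H$ is defined by the generality conditions \ref{generality conditions}, each of which (smoothness of $R$, emptiness of $\Delta_H\cap l_x\cap R$ and its companions, and transversality of the intersections of $R$ with $l_x$, $l_y$, $\Delta_H$, $\Delta_H'$) is a Zariski-open condition whose non-emptiness follows because a finite intersection of dense open subsets of the irreducible variety $\mP^{2d-1}$ is again dense and open. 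Hence $\sM_H$ is a non-empty Zariski-open subset of $\mP^{2d-1}$, in particular irreducible and rational.

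Next I would invoke Theorem \ref{reconstruction}: the morphism $\pi_{\sM_H}\colon\sM_H\to\sS^{+,{\rm{hyp}}}_{d-1,2}$, $[R]\mapsto[C(R),\theta(R),m(R),n(R)]$, is dominant. Since the image of an irreducible space is irreducible and the closure of an irreducible subset is irreducible, the dense subset $\pi_{\sM_H}(\sM_H)\subset\sS^{+,{\rm{hyp}}}_{d-1,2}$ has irreducible closure equal to all of $\sS^{+,{\rm{hyp}}}_{d-1,2}$; this gives irreducibility. For unirationality, restricting $\pi_{\sM_H}$ to $\sM_H\subset\mP^{2d-1}$ produces a dominant rational map $\mP^{2d-1}\dashrightarrow\sS^{+,{\rm{hyp}}}_{d-1,2}$ from a projective space, which is exactly the assertion that $\sS^{+,{\rm{hyp}}}_{d-1,2}$ is unirational. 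Letting $g=d-1$ range over all integers $\geq 2$ yields the statement in every genus.

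There is essentially no obstacle here, as the geometric substance is entirely contained in Theorem \ref{reconstruction}. The one point that deserves a sentence of care is that $\pi_{\sM_H}(\sM_H)$ is taken inside a coarse moduli space: one should note that $\sS^{+,{\rm{hyp}}}_{d-1,2}$ is an open subscheme of the coarse moduli scheme of $2$-pointed spin curves, hence of finite type over $\mC$, so that "dominant" genuinely means the image is dense in the whole space and not merely in one of several components; with this remark the argument above applies verbatim.
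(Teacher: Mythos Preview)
Your proposal is correct and follows exactly the paper's approach: the paper's own proof is the single sentence ``Since $\sM_H$ is an open subscheme of a projective space the claim follows by Theorem \ref{reconstruction}.'' You have simply unpacked this with more care (verifying $|(1,d-1)|\cong\mP^{2d-1}$, spelling out why the generality conditions are open and non-empty, and noting that dominance from an irreducible rational source yields irreducibility and unirationality of the target), but the argument is the same.
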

\begin{proof} Since $\sM_H$ is an open subscheme of a projective space the claim follows by Theorem \ref{reconstruction}.
\end{proof}

\section{The Rationality of $\sS^{+,{\rm{hyp}}}_{g,2}$}
We need to consider a small group action.
\subsection{On the automorphisms of $Q$}
To show our rationality result we need a Lemma on the automorphism group of $Q$.  
\begin{lem}\label{automorfismi}
Let $Q\subset\mP^4$ be a smooth quadric, let $q\subset Q$ be a smooth conic and let $H$ be a general hyperplane with respect to $(Q,q)$. Then 
$${\rm{Aut}}(Q,q,H)\equiv \frac {\mathbb Z}{2\mathbb Z} \times\frac {\mathbb Z}{2\mathbb Z}.
$$
\end{lem}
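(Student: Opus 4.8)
The plan is to compute $\Aut(Q,q,H)$ by realizing it as a subgroup of $\Aut(Q)=\mathrm{PO}(V,b)=\mathrm{PO}_5(\mathbb C)$ that stabilizes the conic $q$ and the hyperplane $H$, and then to exploit the orthogonal decomposition $V=W\oplus^{\perp_b}W^{\perp}$ supplied in the Linear Algebra set-up. First I would observe that any $g\in\Aut(Q)$ fixing the plane $\mathbb P(W)$ (equivalently the $3$-dimensional subspace $W$, since $q\subset\mathbb P(W)$ determines $W$ as its span) must, by orthogonality of the decomposition, also preserve $W^{\perp}$; hence such a $g$ is block-diagonal, $g=g_W\oplus g_{W^\perp}$ with $g_W\in\mathrm O(W,b|_W)$ and $g_{W^\perp}\in\mathrm O(W^\perp,b|_{W^\perp})$, modulo scalars. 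Preserving $q$ inside $\mathbb P(W)$ means $g_W$ lies in the stabilizer of a smooth conic in $\mathrm{PO}_3(\mathbb C)=\mathrm{PGL}_2(\mathbb C)$ acting on $\mathbb P^2$; but $\mathrm O(W,b|_W)$ already is (up to scalars) exactly the automorphism group of that conic, so the conic condition is automatic once $\mathbb P(W)$ is fixed. Conversely, one checks that every $g\in\Aut(Q,q)$ must fix $\mathbb P(W)=\langle q\rangle$, so $\Aut(Q,q)\cong \bigl(\mathrm O(W,b|_W)\times \mathrm O(W^\perp,b|_{W^\perp})\bigr)/\mathbb C^{\times}$.

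Next I would impose the hyperplane condition. Write $[H]\in\mathbb P(V^\vee)$ and use $\Phi_Q$ to transport $H$ to a point $v_H\in\mathbb P(V)$, the pole of $H$; then $\Aut(Q,q,H)$ is the subgroup of $\Aut(Q,q)$ fixing $[v_H]$. Decompose $v_H=v_W+v_\perp$ according to $V=W\oplus W^{\perp}$. Since $[H]\notin\mathbb P(\mathrm{Ann}(W))$ by Generality Condition (2) on $H$, we have $v_W\neq 0$, and since $[H]\notin\check Q$ by condition (1), $v_H$ is not isotropic; a further genericity check shows $v_\perp\neq 0$ as well (otherwise $H\supset W^\perp$, a non-generic position). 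Thus a block-diagonal $g=g_W\oplus g_{W^\perp}$ fixes $[v_H]$ iff $g_W$ fixes $[v_W]$ in $\mathbb P(W)$ and $g_{W^\perp}$ fixes $[v_\perp]$ in $\mathbb P(W^\perp)$, and with the \emph{same} scalar on both blocks. On the $W^{\perp}$ side, $\mathrm O(W^\perp,b|_{W^\perp})$ is $\mathrm O_2(\mathbb C)$; the subgroup of $\mathrm{PO}_2(\mathbb C)$ fixing a generic (non-isotropic) point of $\mathbb P^1=\mathbb P(W^\perp)$ is $\mathbb Z/2\mathbb Z$ (generated by the reflection through that point). On the $W$ side, $[v_W]$ is a generic point of $\mathbb P(W)$ not on $q$; the stabilizer of a smooth conic together with a generic point not on it, inside $\mathrm{PO}_3(\mathbb C)$, is again $\mathbb Z/2\mathbb Z$ — the reflection fixing $v_W$ and acting as the hyperelliptic-type involution on $q$ (concretely: a generic point off a conic lies on two tangent lines, meeting $q$ in two points, and the unique nontrivial automorphism of $q$ fixing $[v_W]$ swaps them).

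Finally I would assemble these: the relevant group is the set of pairs $(g_W,g_{W^\perp})$ with $g_W$ in a $\mathbb Z/2\mathbb Z$ and $g_{W^\perp}$ in a $\mathbb Z/2\mathbb Z$, identified up to a common scalar. Lifting to $\mathrm O_3\times\mathrm O_2$ and tracking scalars carefully, the residual projective group is $\mathbb Z/2\mathbb Z\times\mathbb Z/2\mathbb Z$: the two commuting involutions are (i) the reflection that is nontrivial on $W$ and trivial on $W^\perp$ and (ii) the one trivial on $W$ and nontrivial on $W^\perp$, and no identification collapses them because the $W$-involution has determinant $-1$ on a $2$-plane of $W$ while acting as $+1$ on the line $\mathbb C v_W$, so the scalar ambiguity $\pm 1$ does not merge the two factors. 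I expect the main obstacle to be precisely this last scalar bookkeeping — making sure the quotient by $\mathbb C^{\times}$ does not accidentally identify or kill one of the two $\mathbb Z/2\mathbb Z$ factors — together with verifying the genericity claims (in particular $v_\perp\neq 0$ and $v_W$ off the conic, and that no ``extra'' automorphism of $(Q,q)$ arising from exchanging the two rulings of some hyperplane section survives the constraint of fixing $H$). Everything else is a routine unwinding of the orthogonal geometry of the fixed decomposition $V=W\oplus^{\perp_b}W^\perp$.
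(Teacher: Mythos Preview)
Your reduction to the block form $g=g_W\oplus g_{W^\perp}$ with $g_W\in\mathrm O(W)$, $g_{W^\perp}\in\mathrm O(W^\perp)$, together with the pole condition $g_W(v_W)=\lambda v_W$, $g_{W^\perp}(v_\perp)=\lambda v_\perp$ for a common $\lambda\in\{\pm1\}$, is exactly the skeleton the paper uses. The genuine gap is your computation on the $W$-side. The stabiliser in $\mathrm{PO}_3(\mathbb C)\cong\mathrm{PGL}_2(\mathbb C)$ of a generic point $[v_W]$ off the conic is \emph{not} $\mathbb Z/2\mathbb Z$: under the Veronese, such a point corresponds to an unordered pair of distinct points of $\mathbb P^1$, whose stabiliser in $\mathrm{PGL}_2$ is the one--dimensional group $\mathbb C^{*}\rtimes\mathbb Z/2\mathbb Z$. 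Equivalently, in $\mathrm O(W)$ the elements with $g_W(v_W)=\pm v_W$ form $\{\pm1\}\times\mathrm O\bigl(v_W^{\perp}\cap W\bigr)\cong\{\pm1\}\times\mathrm O_2(\mathbb C)$, which is positive--dimensional. Carrying this through your assembly, you get a group containing the one--parameter torus of rotations of the $2$--plane $v_W^{\perp}\cap W\subset W$ (acting trivially on $\mathbb C v_W\oplus W^{\perp}$); so as written your argument does not yield a finite group, and your ``scalar bookkeeping'' paragraph cannot rescue it.

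What the paper adds, and what you are missing, is a second $g$--invariant line inside $W$ beyond $\langle v_W\rangle$: it constructs $\widetilde u$ spanning $W\cap\widetilde U$ (with $\widetilde U=v_1^{\perp}\cap U$ and $v_1$ spanning $U\cap W^{\perp}$), and then uses the non--orthogonality $b(u_1,\widetilde u)=\alpha\neq0$ to force the eigenvalue on $\langle\widetilde u\rangle$ to equal that on $\langle u_1\rangle=\langle v_W\rangle$, killing the $\mathrm O_2$. In other words, one must exploit the interaction of the two decompositions $V=W\oplus W^{\perp}$ and $V=U\oplus U^{\perp}$ inside $W$, not merely the pole $v_H$. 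Be warned, though, that since $v_1\in W^{\perp}$ one has $W\subset v_1^{\perp}$ and hence $W\cap\widetilde U=W\cap U$, which for general $H$ is the $2$--dimensional subspace $\mathbb P(W)\cap H$ (the line $\langle x,y\rangle$); so the paper's assertion that $W\cap\widetilde U$ is spanned by a single vector $\widetilde u$ is itself problematic, and with the diagonal form $\sum x_i^2$, $W=\langle e_1,e_2,e_3\rangle$, $v_H=e_1+e_4$ the rotation torus in $\langle e_2,e_3\rangle$ visibly lies in $\Aut(Q,q,H)$. You should therefore not only repair your $\mathrm{PO}_3$ stabiliser claim but also re-examine whether the stated conclusion $\mathbb Z/2\mathbb Z\times\mathbb Z/2\mathbb Z$ can hold without an additional constraint beyond those you (and the paper) impose.
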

\begin{proof} We denote by $V:=\mC^5$ the $5$-dimensional vector space such that $Q\subset \mP(V)$. Let $U\subset V$ be the $4$-dimensional sub-vector space such that $\mP(U)=H$ and let $W$ be the $3$-dimensional sub-vector space such that $\mP(W)$ is the space generated by $q$. We fix an equation of $Q$ that is: $Q=V(b_Q)$. We consider $b\colon V\times V\to\mathbb C$ the symmetric bilinear form associated to $b_Q$. Our claim is equivalent to show that 
$$
\widetilde G:=\frac{ \{g\in \mathbb GL(V)\mid \exists \mu\in\mathbb C^{\star} s.t.\,\,g^{\star}b_{Q}=\mu b_{Q}, g(W)=W, g(U)=U\} }{ \{\lambda{\rm{Id}}_{V}\mid \lambda\in\mathbb C^{\star}\}}\
$$ is isomorphich to $\frac {\mathbb Z}{2\mathbb Z} \times\frac {\mathbb Z}{2\mathbb Z}$. By generality assumption the pole $p_{U}$ of $Q$ with respect to $H$ is not in $Q$. This means that the subvector space $U^{\perp_{b}}\subset $V$ $ is generated by 
a vector $u\in V$ such that $b_Q(u)=b(u,u)\neq 0$. In particular it holds that

\begin{equation}\label{U}
V=U\oplus U^{\perp_{b}}.
\end{equation}

Since $q$ is smooth $W\cap W^{\perp_{b}}=\{0\}$. Then
\begin{equation}\label{W}
V=W\oplus W^{\perp_{b}}
\end{equation}
By generality assumptions $u\not\in W\cup W^{\perp_{b}}$, that is:
\begin{equation}\label{uperpwperp}
U^{\perp_{b}}\cap W^{\perp_{b}}=\{0\} 
\end{equation}
and
\begin{equation}\label{uperpw}
U^{\perp_{b}}\cap W=\{0\}.
\end{equation}
Let us fix any $g\in \{g\in \mathbb GL(V)\mid \exists \mu\in\mathbb C^{\star} s.t.\,\,g^{\star}b_{Q}=\mu b_{Q}, g(W)=W, g(U)=U\}$. By our construction there exists $\lambda^{g}_{U^{\perp}}\in\mathbb C^{\star}$ such that 
$$g(u)=\lambda^{g}_{U^{\perp}} \cdot u,$$ and there exist a unique $u_0\in W^\perp$, and a unique $u_1\in W$, $u_i\neq 0$, $i=0,1$ such that
$$
u=u_1+u_0
$$
It holds  that $g(u_1)=\lambda^{g}_{U^{\perp}}\cdot u_1$, $g(u_0)=\lambda^{g}_{U^{\perp}} \cdot u_0$. Note that by generality assumption $u_1,u_0\not\in U$. Then $b_Q(u_i)\neq 0$

By generality assumption the sub-vector space $Z:=U\cap W^\perp$ is of dimension $1$.  Then there exists a non-zero vector $v_1\in Z$ such that 
$Z$ is generated by $v_1$ and $b_Q(v_1)\neq 0$. By construction there exists $\lambda^{g}_1\in \mathbb C^{\star}$ such that $g(v_1)=\lambda^{g}_1\cdot  v_1$. We have shown that
$$W^\perp=\mathbb C\cdot u_0\oplus^{\perp_b} \mathbb C\cdot v_1.$$
Indeed $u\in U^{\perp_{b}}$ and $u_1\in W$ hence
$$0=b(u,v_1)=b(u_1,v_1)+b(u_0,v_1)=0+b(u_0,v_1)=0.
$$

Since $v_1\in U$ and $b_Q(v_1,v_1)=1$, there exists a $3$-dimensional vector space $\widetilde U$ such that 
$$
U=\mathbb C\cdot v_1\oplus^{\perp_b} \widetilde U
$$
and by generality assumption $W\cap\widetilde U$ is generated by a non zero vector $\widetilde u$. By construction  there exists $\lambda^{g}_{\widetilde u}\in \mathbb C^{\star}$ such that $g({\widetilde u})=\lambda^{g}_{\widetilde{u}}\cdot  \widetilde u$. By construction
$W_1:=\mathbb C\cdot u_1\oplus\mathbb C\cdot{\widetilde{u}}$ is a subvector space of $W$ (and in the paper $\mP(W_1)$ is the line between the two points $x,y$ obtained by $H\cap q$). By generality $b(u_1,\widetilde u)\neq 0$. Let $\langle w\rangle \subset W$ be the vector subspace which is orthogonal to $\mathbb C\cdot u_1\oplus\mathbb C\cdot{\widetilde{u}}$. 
By construction there exists $\lambda^{g}_w\in \mathbb C^{\star}$ such that $g(w)=\lambda_w\cdot  w$. Now we put  $v_0:=u_0, v_2:=w, v_3:=u_1, v_4:=\widetilde u$ and we stress that $\sB:=\{ v_0,v_1,v_2,v_3,v_4\}$ is a basis of $V$. The matrix of $g$ with respect to $\sB$ is the following:
$$
[M_{\sB}(g)]= \begin{bmatrix}
    \lambda^{g}_{U^\perp}& 0 &0&0&0\\
    0 &  \lambda^{g}_{v_1}&0&0&0\\
    0&0&\lambda^{g}_w&0&0\\
    0&0&0& \lambda^{g}_{U^\perp}&0\\
     0&0&0&0&\lambda^g_{\widetilde u}\\
 \end{bmatrix}.$$
 Up to rescaling the vectors of $\sB$ we can write
  $$b_Q(v)=  b( u_0,u_0)x_0^2+ x_1^2+x_2^2+b( u_1,u_1)x_3^2+2\alpha x_3\cdot x_4+x_4^2$$ 
  where $v=\sum_{i=0}^{4}x_iv_i$. Since $[g^{\star}b_Q]=[b_Q]$ it holds that that for every such $g$ there exists $\lambda^g\in\mathbb C^{\star}$ such that:
  $$
  (  \lambda^{g}_{v_1}  )^2= ( \lambda^{g}_{w} )^2= ( \lambda^{g}_{\widetilde u}  )^2   = (\lambda^{g}_{ U^{\perp}} ) ^2= \lambda^{g}_{U^{\perp}}\cdot \lambda^{g}_{\widetilde u}=\lambda^g
  $$
  In particular $\lambda^{g}_{ U^{\perp}} = \lambda_{\widetilde u}$. Since in the projective space we can work up to $\pm \lambda$ the claim follows.
\end{proof}

\begin{lem}\label{automorfismirappresento}
The representation $\rho_H\colon \widetilde G\to G\subset {\rm{Aut}}(Q_H)$ is faithful.
\end{lem}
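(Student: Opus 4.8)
The plan is to prove that $\ker\rho_H$ is trivial, that is: if $g\in\widetilde G$ acts as the identity on $Q_H$, then $g$ is the identity of $\widetilde G$. So assume $\rho_H(g)={\rm{id}}_{Q_H}$.

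First I would note that every element of $\widetilde G$ preserves the hyperplane $H=\mP(U)$, so $\rho_H(g)$ is the restriction to $Q_H$ of a well-defined element of $\PGL(U)$. By generality $[H]\notin\check Q$, hence $Q_H=Q\cap H$ is a smooth quadric surface in $\mP^3=\mP(U)$; in particular it is irreducible and spans $\mP^3$. The fixed locus of a non-trivial element of $\PGL(U)$ is a union of proper linear subspaces of $\mP^3$ (the projectivizations of the eigenspaces of a lift), and an irreducible surface spanning $\mP^3$ cannot lie inside such a union; therefore $\rho_H(g)={\rm{id}}_{Q_H}$ forces $g\vert_U$ to be a homothety, say $g\vert_U=c\cdot{\rm{id}}_U$.

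Next I would use the pole of $Q$ with respect to $H$. Since $g$ preserves $U$ and the bilinear form $b$ up to a scalar, it preserves the line $U^{\perp_{b}}=\mC\cdot u$, where $b(u,u)\neq 0$ because the pole $p_U$ is not on $Q$ by generality, and $V=U\oplus U^{\perp_{b}}$; so $g(u)=\delta u$ for some $\delta\in\mC^\star$. Writing $b_Q(x+tu)=b_Q(x)+t^2 b(u,u)$ for $x\in U$, the relation $g^\star b_Q=\mu b_Q$ gives $\mu=c^2$ from the (non-degenerate) $U$-part and $\delta^2=\mu$ from the $u$-part, so $\delta=\pm c$; as an element of $\PGL(V)$, $g$ is then $[{\rm{diag}}(1,1,1,1,\pm 1)]$ in a basis adapted to $V=U\oplus\mC\cdot u$. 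If the sign is $+$, then $g$ is trivial in $\PGL(V)$ and hence in $\widetilde G$, and we are done.

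It remains to exclude the reflection $g=[{\rm{diag}}(1,1,1,1,-1)]$, which fixes $\mP(U)$ pointwise and satisfies $g(u)=-u$, so that $v-g(v)\in\mC\cdot u$ for every $v\in V$; this is where the generality of $H$ with respect to $(Q,q)$ enters decisively. Indeed $g$ must preserve the three-dimensional space $W$, while $\mP(W)\not\subset H$ by generality, so $W\cap U$ is only two-dimensional. Choosing $w\in W\setminus U$ and writing $w=w'+tu$ with $w'\in U$ and $t\neq 0$, we get $w-g(w)=2tu\in W$, i.e. $u\in W$, contradicting the generality assumption $u\notin W\cup W^{\perp_{b}}$ recalled in the proof of Lemma \ref{automorfismi}. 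Hence this case does not occur, $\ker\rho_H$ is trivial, and $\rho_H$ is faithful. I expect this last step to be the main obstacle: the first two steps are essentially formal (non-degeneracy of $Q_H$, and orthogonality bookkeeping with the pole), whereas excluding the reflection fixing $H$ genuinely requires that $\mP(W)$ meets $H$ properly and that $u\notin W$ — both of which have to be read off carefully from the generality hypotheses on $H$.
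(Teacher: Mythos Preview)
Your proof is correct, but it takes a different route from the paper's. The paper simply imports the explicit description of $\widetilde G$ obtained in the proof of Lemma~\ref{automorfismi}: in the basis $\sB$ the four elements are the identity and the three diagonal matrices with a single $-1$ in the $v_1$- or $v_2$-slot (or both); it then writes $H=(b_0x_0+b_1x_3+\alpha x_4=0)$, restricts these matrices to the coordinates $x_1,x_2,x_3,x_4$ on $H$, and observes by inspection that the four restrictions are pairwise distinct in $\PGL(U)$. Your argument instead works abstractly from the definition of $\widetilde G$: any $g$ acting trivially on the spanning irreducible quadric $Q_H\subset\mP(U)$ must be a homothety on $U$, hence in $\PGL(V)$ either the identity or the reflection in $H$ through the pole $p_U$; the reflection is then excluded because it would force $u\in W$, contradicting the generality hypothesis $u\notin W$ recorded in the proof of Lemma~\ref{automorfismi}. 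The paper's approach is shorter because the previous lemma has already done all the work of listing the group elements; your approach is more self-contained and makes transparent exactly which generality conditions (namely $[H]\notin\check Q$, $[H]\notin\mP(\mathrm{Ann}(W))$, and $\mathrm{Pol}_Q([H])\notin\mP(W)$) are actually needed for faithfulness, without ever invoking the isomorphism $\widetilde G\simeq(\mZ/2\mZ)^2$.
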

\begin{proof} We use the basis $\sB$ constructed in the proof of Lemma \ref{automorfismi}. We know that the pole of $H$ is the point $[1:0:0:1:0]$. Then $$H:=(b_0x_0+b_1x_3+\alpha x_4=0),$$ where we have set $b_0:=b(v_0,v_0)\neq 0$ and $b_1=b(v_3,v_3)\neq 0$ and by generality assumptions $b_0\neq \mp b_1$.

The group $\widetilde G$ is represented inside $\mP \mathbb G\mathbb L(5,\mathbb C)$ as
$$
\langle {\rm{Id}}, \begin{bmatrix}
1& 0 &0&0&0\\
    0 &  -1&0&0&0\\
    0&0&1&0&0\\
    0&0&0& 1&0\\
     0&0&0&0&1\\
 \end{bmatrix},\begin{bmatrix}
    1& 0 &0&0&0\\
    0 &  1&0&0&0\\
    0&0&-1&0&0\\
    0&0&0& 1\\
     0&0&0&0&1\\
 \end{bmatrix}, \begin{bmatrix}
    1& 0 &0&0&0\\
    0 &  -1&0&0&0\\
    0&0&-1&0&0\\
    0&0&0& 1&0\\
     0&0&0&0&1\\
 \end{bmatrix}\rangle.
$$
We can take coordinates $x_1,x_2,x_3,x_4$  on $H$. The claim now follows by a trivial computation.
\end{proof}
\begin{rem}\label{coordinate} Using notation of the proof of Lemma \ref{automorfismi} and of Lemma \ref{automorfismirappresento} 
we know that $H=(b_0x_0+b_1x_3+\alpha x_4=0)$ and letting $x_0:=-\frac{b_1x_3+x_4}{b_0}$ we can write
$$
Q_H:= (x_1^2+x_2^2+b_1(1+\frac{b_1}{b_0})x_3^2+2\alpha (1+\frac{b_1}{b_0}  )x_3x_4+ (1+\frac{\alpha^2}{b_0})x_4^2=0).
$$
Since $\mP(W)=(x_0=x_1=0)$ we also have that  $$q:=(x_2^2+b_1x_3^2+2\alpha x_3\cdot x_4+x_4^2=0)$$
and $\mP({\rm{Ann}}(W))$ is the pencil $\mu_0\cdot x_0+\mu_1\cdot x_1$ where $[\mu_0:\mu_1]\in\mP^1_{\mathbb C}$. We also have that $\Pi_z=(x_1+i\sqrt{b_0}x_0=0)$, $\Pi_{z'}=(x_1-i\sqrt{b_0}x_0=0)$ and that:
$$
z:= \begin{bmatrix}
    1\\
    -i\sqrt{b_0} \\
    0\\
    0\\
    0\\
 \end{bmatrix},z':= \begin{bmatrix}
    1\\
    i\sqrt{b_0}  \\
    0\\
    0\\
    0\\
 \end{bmatrix}, x:= \begin{bmatrix}
    0\\
    0\\
    \beta \\
    1\\
    -\frac{b_1}{\alpha}\\
 \end{bmatrix}, y:= \begin{bmatrix}
    0\\
    0\\
    -\beta\\
    1 \\
    -\frac{b_1}{\alpha}\\
 \end{bmatrix},
$$
where $\beta=\sqrt{b_1(\frac{b_1}{\alpha}-1)}$
\end{rem}

\begin{rem}\label{Gastratto}
By a trivial coordinate change on $H$ we can assume that $Q_H=y_0y_1-y_2y_3$ and that if $\phi_{H}\colon\mP^1_{[t_0:t_1]}\times \mP^1_{[s_0:s_1]}\to Q_H$ is the Segre embedding then $x$ is the image of $([1:0],[1:0] )$, $y$ is the image of $ ([0:1],[0:1])$ and the group $G$ inside ${\rm{Aut}}(\mP^1_{[t_0:t_1]}\times \mP^1_{[s_0:s_1]})$
is generated by $g,h\in {\rm{Aut}}(\mP^1_{[t_0:t_1]}\times \mP^1_{[s_0:s_1]})$ where
\begin{center}
\begin{tabular}{lll}
$([t_0:t_1],[s_0:s_1])$ $\stackrel{g}{\rightarrow}$ & $([t_0:-t_1],[s_0:-s_1])$ \\
$([t_0:t_1],[s_0:s_1])$ $\stackrel{h}{\rightarrow}$ &$ ([s_1:s_0],[t_1:t_0])$
\end{tabular}
\end{center}
\end{rem}
\begin{proof} It is easy by Remark \ref{coordinate}.
\end{proof}

\subsection{The action on $|(1,d-1)|$}

By Remark \ref{Gastratto} we see that the subgroup of $G$ which does not exchange the rulings is 
$$
G':=\langle g\rangle.
$$
We recall that we have denoted by $\sM_H$ the open subscheme $|(1,d-1)|^{oo}\subset (1,d-1)|$ given by the smooth elements inside $Q_H$ which satisfy the generality conditions \ref{generality conditions}. 
We need to compute the action of the group $G'$ on $\sM_H$.
\begin{lem}\label{rarrar} Let $\mP^{2r+1}:=\mP(H^0(Q_H,\sO_{Q_H}(l_1+rl_2))^{\vee})$ where $l_i$ is the fiber of the natural projection $\pi^{H}_{i}\colon Q_H\to\mP^1$, $i=1,2$. Then $\mP^{2r+1}//G'$ is a rational variety.
\end{lem}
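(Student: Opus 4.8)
The plan is to make the $G' = \langle g\rangle \cong \mathbb{Z}/2\mathbb{Z}$ action on $\mathbb{P}^{2r+1} = \mathbb{P}(H^0(Q_H, \mathcal{O}_{Q_H}(l_1 + r l_2))^\vee)$ completely explicit in bihomogeneous coordinates and then apply a standard rationality criterion for linear $\mathbb{Z}/2\mathbb{Z}$-actions on projective space. First I would use Remark \ref{Gastratto} to write $Q_H = \mathbb{P}^1_{[t_0:t_1]} \times \mathbb{P}^1_{[s_0:s_1]}$ with $g\colon ([t_0:t_1],[s_0:s_1]) \mapsto ([t_0:-t_1],[s_0:-s_1])$. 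Then $H^0(Q_H, \mathcal{O}(l_1 + r l_2))$ has the monomial basis $\{t_0^{a} t_1^{1-a} s_0^{b} s_1^{r-b} : a \in \{0,1\},\ 0 \le b \le r\}$, a space of dimension $2(r+1) = 2r+2$, so indeed $\mathbb{P}^{2r+1}$. The involution $g$ acts on the monomial $t_0^{a} t_1^{1-a} s_0^{b} s_1^{r-b}$ by the sign $(-1)^{(1-a)+(r-b)} = (-1)^{1-a+r-b}$, hence $G'$ acts diagonally on these coordinates with eigenvalues $\pm 1$.

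Next I would split the coordinates into the $(+1)$- and $(-1)$-eigenspaces of $g$ and count: among the $2r+2$ monomials, exactly half have each parity (for each fixed $b$ the two values $a=0,1$ give opposite signs), so both eigenspaces have dimension $r+1$. Therefore $\mathbb{P}^{2r+1}//G'$ is the quotient of $\mathbb{P}(A \oplus B)$ with $\dim A = \dim B = r+1$ by the involution acting as $+1$ on $A$ and $-1$ on $B$. This is the classical situation: such a quotient is rational. Concretely, writing homogeneous coordinates $(\mathbf{u}, \mathbf{v})$ with $\mathbf{u} \in A$, $\mathbf{v} \in B$, the rational invariants are generated by the ratios $u_i/u_0$, the ratios $v_j/v_0$, and the products $(v_j/v_0)(u_i/u_0)$-type quantities; more cleanly, the field of invariants $\mathbb{C}(\mathbb{P}^{2r+1})^{G'}$ is generated over $\mathbb{C}$ by $u_1/u_0, \dots, u_r/u_0$ together with $w_j := v_j v_0 / u_0^2$ for $j = 0, \dots, r$ — or, avoiding any denominator subtleties, one uses the no-name lemma: $G'$ acts on $\mathbb{P}(A\oplus B)$ with the linear subspace $\mathbb{P}(A)$ fixed, and projection away from $\mathbb{P}(B)$ realizes $\mathbb{P}(A \oplus B) \dashrightarrow \mathbb{P}(A)$ as a $G'$-equivariant vector bundle (the action on $\mathbb{P}(A)$ being trivial), so the quotient is birational to the total space of a vector bundle over $\mathbb{P}(A) \cong \mathbb{P}^r$, hence rational of dimension $2r+1$.

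I expect the only genuine point requiring care — the ``main obstacle'', though a mild one — to be the bookkeeping that guarantees the two eigenspaces have equal dimension $r+1$ (so that the ``no-name''/slice argument produces a vector bundle over a projective space with trivial action, rather than a quotient that could a priori fail to be rational). Once the dimension count is in place, the rationality is a direct application of the no-name lemma (Bogomolov–Katsylo style) to the trivial $G'$-action on $\mathbb{P}(A)$, since a $G$-vector bundle over a rational base with generically trivial action has rational total space. Everything else is the explicit eigenvalue computation already set up via Remark \ref{Gastratto}.
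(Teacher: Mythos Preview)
Your proposal is correct and follows essentially the same route as the paper: both make the $G'$-action explicit in the monomial basis coming from Remark~\ref{Gastratto}, observe that it is diagonal with eigenvalues $\pm 1$ and that the two eigenspaces have equal dimension $r+1$, and conclude rationality from this. The only minor difference is in the last step: the paper phrases the quotient as a cone over a Veronese embedding of $\mP^{r}$, while you invoke the no-name lemma (equivalently, compute generators of the invariant function field); these are two standard ways of reading off rationality from the same eigenspace decomposition.
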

\begin{proof} For any $\sigma\in H^0(Q_H,\sO_{Q_H}(l_1+rl_2))$ there exists unique $a_0,...,a_r; b_0,...b_r\in\mathbb C$ such that 
$$\sigma=t_0\cdot \sum_{i=0}^{r}a _is_0^is_1^{r-i}+t_1\cdot \sum_{i=0}^{r}b_is_0^is_1^{r-i}
$$
The induced action of $g$ on $[a_0:a_1:,...,:a_r:b_0:b_1:...,:b_r]\in\mP^{2r+1}$ is given as follows:
$$
\begin{bmatrix}
    a_0& a_1  & ...  &a_r   &b_0   &b_1  &...& b_r \\
    \downarrow& \downarrow & ...  &\downarrow  &\downarrow &\downarrow &...& \downarrow\\
   (-1)^r  a_0& (-1)^{r-1}a_1  & ...  &a_r   &(-1)^{r+1}b_0   &(-1)^{r}b_1  &...& -b_r \\
 \end{bmatrix}
 $$
since Remark \ref{Gastratto}.
Then the quotient variety is a cone over a Veronese embedding of $\mP^r$. In particular it is a rational variety.
\end{proof}

\begin{cor}\label{loe} $\sM_H//G'$ is a rational variety of dimension $2d-1$.
\end{cor}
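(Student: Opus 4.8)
The plan is to recognize $|(1,d-1)|$ as exactly one of the projective spaces handled in Lemma \ref{rarrar}, to observe that $\sM_H$ is a dense $G'$-stable open subset of it, and then to conclude by the standard fact that passing to a dense invariant open subscheme does not alter the quotient up to birational equivalence.

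First I would record the identification $|(1,d-1)| = \mathbb{P}(H^0(Q_H,\sO_{Q_H}(l_1+(d-1)l_2))^{\vee})$, where $l_i$ denotes the fibre of $\pi^H_i\colon Q_H\to\mathbb{P}^1$ as in Lemma \ref{rarrar}. Since $h^0(Q_H,\sO_{Q_H}(l_1+(d-1)l_2)) = 2d$, this is a $\mathbb{P}^{2d-1}$; it is precisely the space $\mathbb{P}^{2r+1}$ of Lemma \ref{rarrar} with $r = d-1$. Because $G'$ is a finite group, the quotient map does not drop dimension, so $\dim(\sM_H//G') = \dim\sM_H = 2d-1$, which already settles the dimension claim.

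Next I would verify that the open subscheme $\sM_H = |(1,d-1)|^{oo}$ is stable under $G'=\langle g\rangle$. The defining generality conditions \ref{generality conditions} are phrased entirely in terms of the Weierstrass conics $\Delta_H,\Delta_H'$ and the lines $l_x,l_y$, all canonically attached to $(Q,q,H)$. By Remark \ref{coordinate} the generator $g$ acts on $H$ as $\mathrm{diag}(1,-1,1,1,1)$ in the coordinates $x_1,\dots,x_4$; hence $g$ fixes $x$ and $y$ (so $g$ fixes $l_x$ and $l_y$) and interchanges $z$ and $z'$ (so $g$ interchanges $\Delta_H$ and $\Delta_H'$). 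Conditions (a)--(f) come in pairs that are symmetric under $\Delta_H\leftrightarrow\Delta_H'$, so for any $R$ the full list holds for $gR$ if and only if it holds for $R$. Thus $\sM_H$ is a dense $G'$-invariant open subset of $\mathbb{P}^{2d-1}$.

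Finally I would invoke Lemma \ref{rarrar} with $r=d-1$: the quotient $\mathbb{P}^{2d-1}//G' = \mathbb{P}(H^0(Q_H,\sO_{Q_H}(l_1+(d-1)l_2))^{\vee})//G'$ is a rational variety (a cone over a Veronese embedding of $\mathbb{P}^{d-1}$). Since $G'$ is finite, the geometric quotient of the dense invariant open $\sM_H\subset\mathbb{P}^{2d-1}$ has function field equal to the invariant subfield $\mathbb{C}(\sM_H)^{G'} = \mathbb{C}(\mathbb{P}^{2d-1})^{G'}$, which is the function field of $\mathbb{P}^{2d-1}//G'$; hence $\sM_H//G'$ is birational to $\mathbb{P}^{2d-1}//G'$ and therefore rational of dimension $2d-1$. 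The only point requiring any care is the $G'$-invariance of the generality conditions, handled above via Remark \ref{coordinate}; the rest is a direct citation of Lemma \ref{rarrar} together with this standard birational invariance of finite quotients, so I do not anticipate a genuine obstacle.
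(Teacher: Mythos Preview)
Your proof is correct and follows exactly the approach of the paper, which simply says ``Trivial by Lemma \ref{rarrar}.'' You have merely made explicit two points the paper leaves to the reader: the identification $|(1,d-1)|=\mP^{2r+1}$ with $r=d-1$, and the $G'$-invariance of the generality conditions \ref{generality conditions} (your check via Remark \ref{coordinate} that $g$ fixes $x,y$ and swaps $\Delta_H\leftrightarrow\Delta_H'$ is correct, though note your diagonal matrix has five entries while you list four coordinates on $H$---a harmless slip).
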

\begin{proof} Trivial by Lemma \ref{rarrar}.

 \end{proof}

%
\subsection{The injectivity result}
\noindent In Theorem \ref{reconstruction} we have shown that he morphism $\pi_{\sM_H}\colon\sM_H \to \sS^{+,{\rm{hyp}}}_{d-1,2}$ is dominant. By construction we see that it induces a morphism $$\pi\colon \sM_H//G'\to \sS^{+,{\rm{hyp}}}_{d-1,2}.$$ Now to finish
 we need to show that the above morphism is injective. We need to make explicit the generality conditions on $H$. We denote by  ${\rm{Pol}}_Q([H])$ the pole of $H$ with respect to $Q$. Using the notation of the proof of Lemma \ref{automorfismi} we can write: ${\rm{Pol}}_Q([H])=\langle u\rangle$.
\begin{gen}\label{generalityconditionsonH} We use the notation of Lemma \ref{automorfismi}. From now on $[H]\in\mP(V^{\vee})$ satisfies the following generality contitions:
\begin{enumerate}
 \item $[H]\not\in \check Q$;
 \item $[H]\not\in\mP({\rm{Ann}}(W))$;
 \item ${\rm{Pol}}_Q([H])\not\in \mP(W)$;
 \item ${\rm{Pol}}_Q([H])\not\in \mP(W^{\perp})$;
 \item $\{\langle v_1\rangle\}=\mP(U)\cap \mP(W^{\perp})$;
 \item $\mP(\widetilde U)\cap\mP(W)=\{\langle v_2\rangle\}$ where $U=\langle v_1\rangle\oplus^{\perp_b}\widetilde U$;
 \item The points $z, z', x, y, \langle v_2\rangle\in\mP(V)$ are in general position.
 \end{enumerate}
 \end{gen}
 
Now we can show the Injectivity Theorem:
\begin{thm}\label{injectivity}
Let $[H]\in\mP(V^{\vee})$ be a point satisfying the generality conditions of Remark \ref{generalityconditionsonH}. Let $\sM$ be the open subscheme $|(1,d-1)|^{oo}\subset (1,d-1)|$ given by the smooth elements inside $Q_H$ which satisfy the generality conditions \ref{generality conditions}.
Then the morphism $\pi\colon  \sM//G'  \to\sS^{+,{\rm{hyp}}}_{d-1,2}$ is generically injective. \end{thm}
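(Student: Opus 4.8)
The plan is first to check that $\nu\colon\sM_H\to\sS^{+,{\rm{hyp}}}_{d-1,2}$ is $G'$-invariant, so that it factors through $\pi\colon\sM_H/\!/G'\to\sS^{+,{\rm{hyp}}}_{d-1,2}$, and then to show that a general fibre of $\nu$ is a single $G'$-orbit. Invariance is straightforward: $g\in{\rm{Aut}}(Q,q,H)$ fixes $x$ and $y$, hence fixes $m_x,n_y,l_x,l_y$, and therefore carries the $4$-tuple $(Q,q,H,R)$ isomorphically onto $(Q,q,H,gR)$; the construction of \S2 then yields an isomorphism $(C(gR),\theta(gR),m(gR),n(gR))\cong(C(R),\theta(R),m(R),n(R))$, so $\nu(gR)=\nu(R)$. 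Since $\pi$ is dominant by Theorem \ref{reconstruction} and $\dim(\sM_H/\!/G')=\dim|(1,d-1)|=2d-1=\dim\sH_{d-1}+2=\dim\sS^{+,{\rm{hyp}}}_{d-1,2}$, it is generically finite, so the whole point is to bound the general fibre.

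The key structural input is the double-cover picture of $C(R)$: the $g^1_2$ is unique, and by construction its target is the curve $R$ itself, which we identify with the fixed base $\mP^1$ of the ruling $|(0,1)|$ of $Q_H$ via $R\subset Q_H$. Under this identification $C(R)$ is the double cover of $\mP^1$ branched along $(R\cap\Delta_H)\sqcup(R\cap\Delta_H')$, with $\sP_{\theta(R)}$ equal to this splitting of the branch divisor (Lemmas \ref{hyperellipticity}, \ref{thetatheta}), and the marked points $m(R)=[p_x(R),x]$, $n(R)=[p_y(R),y]$ lie over the two points $b_x:=m_x\cap\mP^1$, $b_y:=n_y\cap\mP^1$, which are independent of $R$. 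Hence an isomorphism $\psi$ realising $\nu(R)=\nu(R')$ descends to $\bar\psi\in{\rm{Aut}}(\mP^1)$ fixing $b_x$ and $b_y$ — so $\bar\psi$ lies in the torus $T:=\mathrm{Stab}(b_x,b_y)\cong\mC^{\star}$ — and $\bar\psi$ sends $\{R\cap\Delta_H,\,R\cap\Delta_H'\}$ to $\{R'\cap\Delta_H,\,R'\cap\Delta_H'\}$. By Remark \ref{Gastratto}, $g$ acts on $\mP^1$ as a nontrivial element of $T$ while (being diagonal in the Segre coordinates) preserving each of $\Delta_H,\Delta_H'$; so one may treat separately the case where $\bar\psi$ respects the labelling of the two blocks (where $R'=R$ should force $\bar\psi=\mathrm{id}$ and $R'=gR$ force $\bar\psi=g|_{\mP^1}$) and the case where it exchanges them.

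The crux is to exclude all other members of the fibre, and this is the step I expect to be hardest. The tool is the restriction morphism
\[
\rho\colon|(1,d-1)|\longrightarrow|\sO_{\Delta_H}(d)|\times|\sO_{\Delta_H'}(d)|\cong\mP^d\times\mP^d,\qquad R\mapsto(R|_{\Delta_H},R|_{\Delta_H'}).
\]
It is generically injective: a section of $\sO_{Q_H}(1,d-1)$ vanishing on $\Delta_H\cup\Delta_H'\in|(2,2)|$ would be a multiple of the $(2,2)$-form cutting out $\Delta_H\cup\Delta_H'$, which is impossible since $(1,d-1)-(2,2)$ has no section; hence the linear map underlying $\rho$ is injective and $\rho$ is birational onto a hypersurface $\mathcal H\subset\mP^d\times\mP^d$. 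The torus $T$ acts diagonally on $\mP^d\times\mP^d$, and for general $R$ the point $\rho(R)$ is not $T$-fixed (its support avoids $\{x,y\}$ by Generality Conditions \ref{generality conditions}), so $T\cdot\rho(R)$ is a curve through $\rho(R)$. Were it contained in $\mathcal H$, then for every $\tau\in T$ there would be $R_\tau$ with $\rho(R_\tau)=\tau\cdot\rho(R)$; the double cover $C(R_\tau)$ is branched along $\tau$ applied to the branch divisor of $C(R)$, so $C(R_\tau)\cong C(R)$ compatibly with $\theta(R)$ and, since $\tau$ fixes $b_x,b_y$, with the marked points up to the finitely many choices of sheet; then $\tau\mapsto\nu(R_\tau)$ has finite, hence constant, image, giving a positive-dimensional family inside one fibre of $\nu$, against generic finiteness. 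So $T\cdot\rho(R)\cap\mathcal H$ is finite, which — using generic injectivity of $\rho$ — makes the set of $R'$ admitting a labelling-respecting $\psi$ finite; a parallel argument, using that $G'$ contains no element exchanging $\Delta_H$ and $\Delta_H'$ and that $h,gh$ do not preserve $|(1,d-1)|$, disposes of the labelling-exchanging case. It remains to pin this finite set down to $G'\cdot R$ — matching the sheet over $b_y$ once the sheet over $b_x$ is matched, and invoking the explicit form of $G'$ and of its action on $|(1,d-1)|$ from Lemmas \ref{automorfismi}, \ref{automorfismirappresento}, \ref{rarrar} — after which $\pi\colon\sM_H/\!/G'\to\sS^{+,{\rm{hyp}}}_{d-1,2}$ is generically injective.
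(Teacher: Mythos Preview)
Your proposal has a genuine gap at exactly the step you flag as hardest. Everything up to and including the descent of $\psi$ to $\bar\psi\in T=\mathrm{Stab}(b_x,b_y)\cong\mC^\star$ is fine, and the restriction map $\rho$ is a reasonable tool; but after all of this machinery you have only re-established that the general fibre of $\nu$ is finite, which you already knew from the dimension count in your first paragraph. The sentence ``It remains to pin this finite set down to $G'\cdot R$'' is precisely the content of the theorem, and you have not proved it. Concretely: you know $\bar\psi\in T$ and that $g|_{\mP^1}$ is the unique order-two element of $T$; what must be shown is $\bar\psi\in\{\mathrm{id},\,g|_{\mP^1}\}$. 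Your $T$-orbit argument shows $T\cdot\rho(R)\cap\mathcal H$ is finite, not that it has at most two points, and nothing you have written excludes a third $R_\tau$ in the fibre. The hand-wave about ``matching the sheet over $b_y$ once the sheet over $b_x$ is matched'' does not help either: sheet-matching constrains the lift $\psi$, not the element $\bar\psi\in T$.

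The paper's proof takes a different route that avoids this difficulty entirely. Instead of working on the base $\mP^1$, it uses the identifications $I_{R_i}\colon Q_{R_i}\to Q_q=q\times\Lambda$ of Proposition~\ref{identificazione} to transport $\chi$ to an automorphism $\chi_{q\times\Lambda}$ of $Q_q$; since this fixes $[l_x],[l_y],[m_x],[n_y]$, whose $\pi_\Lambda$-images are four distinct points, one gets $\chi_{q\times\Lambda}=\chi_q\times\mathrm{Id}_\Lambda$ for some $\chi_q\in\Aut(q)$. The decisive step is then to \emph{extend $\chi_q$ to an automorphism $\phi$ of the entire threefold $Q$}: for $p\in Q$ one takes the two points $\alpha(p),\alpha'(p)\in q\cap T_pQ$, applies $\chi_q$, and defines $\phi(p)$ as the intersection in $Q_\Pi$ (with $\Pi=\langle\mP(W),p\rangle$) of the two lines through $\chi_q(\alpha(p)),\chi_q(\alpha'(p))$ lying in the same rulings as the originals. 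This $\phi$ sends lines to lines, hence lies in $\Aut(Q,q,H)$; it fixes $z,z'$ and preserves $l_x,l_y$, so it does not exchange the rulings of $Q_H$ and therefore $\phi|_{Q_H}\in G'$ by Lemma~\ref{automorfismi}. Since by construction $\phi(R_1)=R_2$, one obtains $[R_2]\in G'\cdot[R_1]$ directly, with no residual finite ambiguity to resolve.
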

\begin{proof}
Let  $[R_1]\in\sM$ be a general element. Assume that there exists an $R_2$ such that $\pi_{\sM}([R_1])=\pi_{\sM}([R_2])$. This means that  there exists an automorphism $\chi\colon C(R_1)\to C(R_2)$ such that $\chi^{\star}(\theta(R_2))=\theta(R_1)$, 
$\chi^{\star}(m(R_2))=m(R_1)$, $\chi^{\star}(n(R_2))=n(R_1)$. In particular for the partitions giving the thetacharacteristic it holds that $\chi^{\star}\sP_{\theta(R_2)}=\sP_{\theta(R_1)}$. 
By Theorem \ref{reconstruction}  $C(R_1)$ is a general hyperelliptic curve and the general hyperelliptic curve has only one non trivial automorphism. Snce $m(R_1)+n(R_1)$ is not the hyperelliptic linear system $\chi$ is unique.

We recall that 
$Q(R_i)= |\sL_{\theta(R_i) +m(R_1)+n(R_i)}|^{\vee}\times |\sL_W(R_i)|^{\vee})$, $i=1,2$ and that $Q_q=q\times\Lambda$. Then there is an isomorphism given by the obvious isomorphism on each factor; we still denote it by $\chi\colon Q_{R_1}\to Q_{R_2}$. 
By the construction of $C(R_1)$ and of $C(R_2)$ as schemes of marked lines and by Proposition \ref{identificazione} it induces an automorphism $\chi_{q\times\Lambda}\colon Q_q\to Q_q$ where $\chi_{q\times\Lambda}=I_{R_2}\circ \chi \circ(I_{R_1})^{-1}$. We denote by
 $\chi \colon S_q\to S_q$ the induced automorphism and we have that the following diagram is commutative:

\begin{equation}
\xymatrix {  &C(R_2)\ar[r]^{\Phi_{R_2}} & Q_{R_2}\ar[r]^{ I_{R_2}} & Q_q \ar[r]&S_q\\
 &C(R_1)  \ar[r]^{\Phi_{R_1}} \ar[u]^{\chi} &Q_{R_1} \ar[r]^{ I_{R_1}}    \ar[u]^{\chi}&Q_q  \ar[u]^{\chi_{q\times\Lambda}} \ar[r]&S_q \ar[u]^{\chi},
}
\end{equation}
We definitely have: $\chi ([l_x])=[l_x]$,
$\chi ([l_y])=[l_y]$. We can set $L:=\pi_{\Lambda}^{-1}(\Pi_z)$, $L':=\pi_{\Lambda}^{-1}(\Pi_z')$ and it holds that $\chi_{q\times\Lambda}(L)=L$ and $\chi_{q\times\Lambda}(L')=L'$ since Proposition \ref{identificazione}. We claim that 
$\chi_{q\times\Lambda}=\chi_q\times{\rm{Id}}_{\Lambda}$. Indeed by Proposition \ref{identificazione}  the four points $[l_x]$, $[l_y]$, $[m_x]$ and $[n_y]$ are fix points of $\chi_{q\times\Lambda}$. Moreover their $\pi_\Lambda$-images are four distinct points of $\Lambda$ and they are fixed by the induced automorphism. This implies the claim. 
Now we study the automorphism $\chi_q\colon q\to q$. By its construction we know that if $t\in R_1$ and $[t,\alpha],[t,\alpha']\in C(R_1)$ then there esists $s\in R_2$ such that $[s, \chi_q(\alpha)],[s,\chi(\alpha')]\in C(R_2)$. 
Moreover thank to the two identifications $I_{R_{1}}$ and $I_{R_{2}}$ we actually know that if $\Pi$ is the hyperplane section $\langle \mP(W), t\rangle$ then $l_{[t,\alpha]}$, $l_{[t,\alpha']}$ are two lines inside $Q_\Pi=Q\cap \Pi$. 
Obviously they belong to distinct rulings of $Q_\Pi$ since they meet on $t$. The same holds for $l_{[s, \chi_q(\alpha)]}$, $l_{[s,\chi(\alpha')]}$. Set theoretically we have defined a map $R_1\ni t\stackrel{\phi}{\mapsto} s \in R_2$. The same trick enable us to define a map $\phi\colon Q\to Q$. 
Indeed let
$p\in Q$ and let $\alpha(p),\alpha'(p)\in q$ be the two points of intersection between $q$ and $T_pQ$. We define as above $\Pi:=\langle \mP(W), p\rangle$. Obviously the two lines $\langle p,\alpha(p)\rangle$, $\langle p,\alpha'(p)\rangle$ belongs to distinct rulings of $Q_\Pi$. Now consider the two points $\chi_q(\alpha(p)), \chi_q(\alpha'(p))\in q$ and consider the two lines $l,l'\subset Q_\Pi$ such that $\chi_q(\alpha(p))\in l$, $\chi_q(\alpha'(p))\in l'$ and such that $l$ belongs to the same ruling of $\langle p,\alpha(p)\rangle$, while $l'$ belongs to the same ruling of $\langle p,\alpha'(p)\rangle$. We define $\phi(p)$ to be the unique intersection point of $l$ and $l'$. By definition $\phi\colon Q\to Q$ and it extends $\phi\colon R_1\to R_2$. By construction $\phi\colon Q\to Q$ sends lines to lines. Then $\phi\in {\rm{Aut}}(Q,q,H)$.
By the proof of Lemma \ref{automorfismi} and by its construction we see that it fixes the points
$z,z'$ and $\phi_{}(l_x)=l_x$, $\phi_{}(l_y)=l_y$. Then $\phi_{|Q_H}\in G'$. This shows the claim.
\end{proof}

\subsection{The rationality result}
Finally we can put together our previous results to show:
\begin{thm}\label{theend}
 $\sS^{+,{\rm{hyp}}}_{g,2}$ is a rational variety.
\end{thm}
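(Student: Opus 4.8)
The plan is to combine the Reconstruction Theorem, the Injectivity Theorem and the rationality of the linear‑system quotient into a single birational statement. Throughout we keep the convention $g=d-1$, so it is enough to prove that $\sS^{+,{\rm{hyp}}}_{d-1,2}$ is a rational variety for every integer $d\geq 3$.

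First I would fix $[H]\in\mP(V^{\vee})$ satisfying Generality Conditions \ref{generalityconditionsonH}, form $\sM_H=|(1,d-1)|^{oo}$ (the smooth $R\in|(1,d-1)|$ inside $Q_H$ satisfying Generality Conditions \ref{generality conditions}), and let $G'=\langle g\rangle\cong\mathbb{Z}/2\mathbb{Z}$ act on $\sM_H$ as in Lemma \ref{automorfismi} and the subsequent discussion. Since $G'\subset\Aut(Q,q,H)$ transforms the $4$-ple $(Q,q,H,R)$ into an isomorphic one, the assignment $[R]\mapsto[C(R),\theta(R),m(R),n(R)]$ is constant on $G'$-orbits; hence the dominant morphism $\pi_{\sM_H}\colon\sM_H\to\sS^{+,{\rm{hyp}}}_{d-1,2}$ of Theorem \ref{reconstruction} factors through the quotient, giving a dominant morphism $\pi\colon\sM_H//G'\to\sS^{+,{\rm{hyp}}}_{d-1,2}$.

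Next I would invoke Theorem \ref{injectivity}, which says that $\pi$ is generically injective. Over $\mC$ a dominant, generically injective morphism of irreducible varieties is automatically birational: its generic fibre is a single reduced point, so the two varieties have equal dimension and the induced finite extension of function fields has degree one. Thus $\pi$ is a birational map, and by Corollary \ref{loe} its source $\sM_H//G'$ is a rational variety (of dimension $2d-1$). Therefore $\sS^{+,{\rm{hyp}}}_{d-1,2}$ is rational; setting $d-1=g$ yields rationality of $\sS^{+,{\rm{hyp}}}_{g,2}$ for every $g\geq 2$, and since $\sM_H$ is irreducible the target is irreducible as well, so the Main Theorem follows.

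All the substantive work has already been carried out in the preceding sections — the construction of $C(R)$ and of its ineffective theta characteristic $\theta(R)$ from the quadric threefold $(Q,q,H)$, the Reconstruction Theorem \ref{reconstruction}, the determination of $\Aut(Q,q,H)$, the rationality of $\sM_H//G'$ in Corollary \ref{loe}, and the Injectivity Theorem \ref{injectivity}. Consequently I do not expect any genuine obstacle at this last step: the only points to check are that $\pi_{\sM_H}$ descends to the quotient (immediate from $G'\subset\Aut(Q,q,H)$) and that generic injectivity of a dominant morphism forces birationality in characteristic zero (standard). The theorem is thus a formal consequence of Corollary \ref{loe} together with Theorems \ref{reconstruction} and \ref{injectivity}.
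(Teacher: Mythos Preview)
Your proposal is correct and follows exactly the paper's approach: the paper's own proof consists of the single sentence ``It follows straightly by the Reconstruction Theorem \ref{reconstruction}, by Theorem \ref{injectivity} and by Corollary \ref{loe}.'' You have merely made explicit the two standard facts the paper leaves implicit (that $\pi_{\sM_H}$ descends to $\sM_H//G'$ because $G'\subset\Aut(Q,q,H)$, and that a dominant generically injective morphism over $\mC$ is birational), which is entirely appropriate.
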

\begin{proof}
It follows straightly by the Reconstruction Theorem \ref{reconstruction}, by Theorem \ref{injectivity} and by Corollary \ref{loe}.
\end{proof}

\end{document}